\begin{document}
\title{A counting argument for the geometric Bombieri--Lang conjecture on ramified covers of abelian varieties}
\author{Guoquan Gao}
\maketitle

\begin{abstract}
We prove the geometric Bombieri--Lang conjecture for projective varieties which have finite maps to abelian varieties over function fields of characteristic 0. This generalizes the recent results of Xie--Yuan, which require either the hyperbolicity assumption or the non-isotriviality assumption. The proof builds upon their strategy for constructing entire curves, yet hinges crucially on a new counting argument and draws substantially on tools from algebraic geometry and Nevanlinna theory to overcome various technical difficulties.
\end{abstract}

\theoremstyle{plain}
\newtheorem{thm}{Theorem}[section]
\newtheorem{theorem}[thm]{Theorem}
\newtheorem{cor}[thm]{Corollary}
\newtheorem{corollary}[thm]{Corollary}
\newtheorem{lem}[thm]{Lemma}
\newtheorem{lemma}[thm]{Lemma}
\newtheorem{pro}[thm]{Proposition}
\newtheorem{proposition}[thm]{Proposition}
\newtheorem{prop}[thm]{Proposition}
\newtheorem{definition}[thm]{Definition}
\newtheorem{assumption}[thm]{Assumption}
\newtheorem{conjecture}[thm]{Conjecture}
\newtheorem{sublemma}[thm]{Sub-lemma}

\theoremstyle{remark}
\newtheorem{remark}[thm]{Remark}
\newtheorem{example}[thm]{Example}
\newtheorem{remarks}[thm]{Remarks}
\newtheorem{problem}[thm]{Problem}
\newtheorem{exercise}[thm]{Exercise}
\newtheorem{situation}[thm]{Situation}
\newtheorem{Question}[thm]{Question}

\numberwithin{equation}{subsection}

\newcommand{\ZZ}{\mathbb{Z}}
\newcommand{\CC}{\mathbb{C}}
\newcommand{\QQ}{\mathbb{Q}}
\newcommand{\RR}{\mathbb{R}}
\newcommand{\HH}{\mathcal{H}}     

\newcommand{\ad}{\mathrm{ad}}            
\newcommand{\NT}{\mathrm{NT}}
\newcommand{\nonsplit}{\mathrm{nonsplit}}
\newcommand{\Pet}{\mathrm{Pet}}
\newcommand{\Fal}{\mathrm{Fal}}

\newcommand{\cs}{{\mathrm{cs}}}

\newcommand{\ZZn}{\mathbb{Z}[1/n]}
\newcommand{\ZZN}{\mathbb{Z}[1/N]}

\newcommand{\pair}[1]{\langle {#1} \rangle}
\newcommand{\wpair}[1]{\left\{{#1}\right\}}
\newcommand{\wh}{\widehat}
\newcommand{\wt}{\widetilde}

\newcommand\Spf{\mathrm{Spf}}

\newcommand{\lra}{{\longrightarrow}}

\newcommand{\matrixx}[4]
{\left( \begin{array}{cc}
  #1 &  #2  \\
  #3 &  #4  \\
 \end{array}\right)}        


\newcommand{\BA}{{\mathbb {A}}}
\newcommand{\BB}{{\mathbb {B}}}
\newcommand{\BC}{{\mathbb {C}}}
\newcommand{\BD}{{\mathbb {D}}}
\newcommand{\BE}{{\mathbb {E}}}
\newcommand{\BF}{{\mathbb {F}}}
\newcommand{\BG}{{\mathbb {G}}}
\newcommand{\BH}{{\mathbb {H}}}
\newcommand{\BI}{{\mathbb {I}}}
\newcommand{\BJ}{{\mathbb {J}}}
\newcommand{\BK}{{\mathbb {K}}}
\newcommand{\BL}{{\mathbb {L}}}
\newcommand{\BM}{{\mathbb {M}}}
\newcommand{\BN}{{\mathbb {N}}}
\newcommand{\BO}{{\mathbb {O}}}
\newcommand{\BP}{{\mathbb {P}}}
\newcommand{\BQ}{{\mathbb {Q}}}
\newcommand{\BR}{{\mathbb {R}}}
\newcommand{\BS}{{\mathbb {S}}}
\newcommand{\BT}{{\mathbb {T}}}
\newcommand{\BU}{{\mathbb {U}}}
\newcommand{\BV}{{\mathbb {V}}}
\newcommand{\BW}{{\mathbb {W}}}
\newcommand{\BX}{{\mathbb {X}}}
\newcommand{\BY}{{\mathbb {Y}}}
\newcommand{\BZ}{{\mathbb {Z}}}

\newcommand{\CA}{{\mathcal {A}}}
\newcommand{\CB}{{\mathcal {B}}}
\newcommand{\CD}{{\mathcal{D}}}
\newcommand{\CE}{{\mathcal {E}}}
\newcommand{\CF}{{\mathcal {F}}}
\newcommand{\CG}{{\mathcal {G}}}
\newcommand{\CH}{{\mathcal {H}}}
\newcommand{\CI}{{\mathcal {I}}}
\newcommand{\CJ}{{\mathcal {J}}}
\newcommand{\CK}{{\mathcal {K}}}
\newcommand{\CL}{{\mathcal {L}}}
\newcommand{\CM}{{\mathcal {M}}}
\newcommand{\CN}{{\mathcal {N}}}
\newcommand{\CO}{{\mathcal {O}}}
\newcommand{\CP}{{\mathcal {P}}}
\newcommand{\CQ}{{\mathcal {Q}}}
\newcommand{\CR }{{\mathcal {R}}}
\newcommand{\CS}{{\mathcal {S}}}
\newcommand{\CT}{{\mathcal {T}}}
\newcommand{\CU}{{\mathcal {U}}}
\newcommand{\CV}{{\mathcal {V}}}
\newcommand{\CW}{{\mathcal {W}}}
\newcommand{\CX}{{\mathcal {X}}}
\newcommand{\CY}{{\mathcal {Y}}}
\newcommand{\CZ}{{\mathcal {Z}}}

\newcommand{\ab}{{\mathrm{ab}}}
\newcommand{\Ad}{{\mathrm{Ad}}}
\newcommand{\an}{{\mathrm{an}}}
\newcommand{\Aut}{{\mathrm{Aut}}}

\newcommand{\Br}{{\mathrm{Br}}}
\newcommand{\bs}{\backslash}
\newcommand{\bbs}{\|\cdot\|}

\newcommand{\cod}{{\mathrm{cod}}}
\newcommand{\cont}{{\mathrm{cont}}}
\newcommand{\cl}{{\mathrm{cl}}}
\newcommand{\criso}{{\mathrm{criso}}}

\newcommand{\dR}{{\mathrm{dR}}}
\newcommand{\disc}{{\mathrm{disc}}}
\newcommand{\Div}{{\mathrm{Div}}}
\renewcommand{\div}{{\mathrm{div}}}

\newcommand{\Eis}{{\mathrm{Eis}}}
\newcommand{\End}{{\mathrm{End}}}

\newcommand{\Frob}{{\mathrm{Frob}}}

\newcommand{\Gal}{{\mathrm{Gal}}}
\newcommand{\GL}{{\mathrm{GL}}}
\newcommand{\GO}{{\mathrm{GO}}}
\newcommand{\GSO}{{\mathrm{GSO}}}
\newcommand{\GSp}{{\mathrm{GSp}}}
\newcommand{\GSpin}{{\mathrm{GSpin}}}
\newcommand{\GU}{{\mathrm{GU}}}
\newcommand{\BGU}{{\mathbb{GU}}}

\newcommand{\Hom}{{\mathrm{Hom}}}
\newcommand{\Hol}{{\mathrm{Hol}}}
\newcommand{\HC}{{\mathrm{HC}}}

\renewcommand{\Im}{{\mathrm{Im}}}
\newcommand{\Ind}{{\mathrm{Ind}}}
\newcommand{\inv}{{\mathrm{inv}}}
\newcommand{\Isom}{{\mathrm{Isom}}}

\newcommand{\Jac}{{\mathrm{Jac}}}
\newcommand{\JL}{{\mathrm{JL}}}

\newcommand{\Ker}{{\mathrm{Ker}}}
\newcommand{\KS}{{\mathrm{KS}}}

\newcommand{\Lie}{{\mathrm{Lie}}}

\newcommand{\new}{{\mathrm{new}}}
\newcommand{\NS}{{\mathrm{NS}}}

\newcommand{\ord}{{\mathrm{ord}}}
\newcommand{\ol}{\overline}

\newcommand{\rank}{{\mathrm{rank}}}

\newcommand{\PGL}{{\mathrm{PGL}}}
\newcommand{\PSL}{{\mathrm{PSL}}}
\newcommand{\Pic}{\mathrm{Pic}}
\newcommand{\Prep}{\mathrm{Prep}}
\newcommand{\Proj}{\mathrm{Proj}}

\newcommand{\Picc}{\mathcal{P}ic}

\renewcommand{\Re}{{\mathrm{Re}}}
\newcommand{\red}{{\mathrm{red}}}
\newcommand{\sm}{{\mathrm{sm}}}
\newcommand{\sing}{{\mathrm{sing}}}
\newcommand{\reg}{{\mathrm{reg}}}

\newcommand{\tor}{{\mathrm{tor}}}
\newcommand{\tr}{{\mathrm{tr}}}

\newcommand{\ur}{{\mathrm{ur}}}

\newcommand{\vol}{{\mathrm{vol}}}

\newcommand{\ds}{\displaystyle}

\tableofcontents

\section{Introduction}
The far-reaching Bombieri--Lang conjecture is a higher-dimensional generalization of the celebrated Mordell conjecture proved by Faltings \cite{zbMATH03944027}. Beyond the Mordell conjecture, an important known case of the Bombieri--Lang conjecture is the case of subvarieties of abelian varieties by Faltings \cite{zbMATH04214190, zbMATH00721811}.

The geometric Bombieri--Lang conjecture is an analogue of the Bombieri--Lang conjecture over function fields. There are several versions of it. In the case of characteristic 0, \cite[Conjecture 1.1]{arXiv:2305.14789} formulates a version that gives a very specific description of the structure of the rational points on a projective variety (over the function field). This version also implies most of other versions (see \cite[\S2.4]{arXiv:2305.14789} for more details). Thus we will use this version and focus on the case of characteristic 0 throughout this article.

The conjecture (or its suitable versions) is proved for curves by \cite{zbMATH03266293, zbMATH03223435}, for smooth projective varieties with ample cotangent bundles by \cite{zbMATH03719282, zbMATH03849398, zbMATH06923798}, for subvarieties of abelian varieties by \cite{zbMATH03832101, zbMATH00016118, zbMATH00224044, zbMATH00957009}, and for constant hyperbolic varieties by \cite[Corollary 4.2]{zbMATH00090953}. See \cite{zbMATH03645250} (or \cite[Theorem 2.3]{MR4457668}), \cite[Theorem E, Theorem 6.3]{arXiv:2310.06784} for further results on constant varieties.

Recently, Xie and Yuan considered the case that $X$ has a finite map $f\colon X\to A$ to an abelian variety $A$ over $K$. Here $K$ is a finitely generated field over a field $k$ of characteristic 0. They proved in \cite{arXiv:2305.14789} the case when $X$ is hyperbolic, and in \cite{arXiv:2308.08117} the case when the $K/k$-trace $A^{(K/k)}$ of $A$ is 0. We refer to \cite{zbMATH05233949} for more details on Chow's trace of abelian varieties. It is worth mentioning that Parshin's method in \cite{zbMATH00016118} can also be used to prove the intersection of these two cases; see \cite{zbMATH08143088} for more details. Besides, \cite{zbMATH07629510, arXiv:2507.21468} also provides some results on the sparsity of rational points for ramified covers of abelian varieties.

The goal of this article is to remove both of the conditions above. Namely, we neither assume that $X$ is hyperbolic nor that $A$ is traceless. Unlike most problems over function fields, a non-trivial trace creates essential difficulty here; see the discussion in \S\ref{setup}. Our approach starts from the strategy for constructing entire curves in \cite{arXiv:2308.08117}, but hinges on a new counting argument, which leads to several new technical difficulties. To overcome these difficulties, we require extensive use of tools from algebraic geometry and Nevanlinna theory. We will state the main result in \S\ref{1.1} and introduce the ideas in \S\ref{setup}, \S\ref{idea} and \S\ref{jishu}. Specifically, \S\ref{setup} introduces some basic reductions and setups, \S\ref{idea} then presents the key counting argument. Finally \S\ref{jishu} addresses three technical difficulties and their resolutions.

\subsection{The main result}\label{1.1}
Now we state the main result. By a variety, we mean an integral separated scheme of finite type over a field. Recall that the \emph{algebraic special set} ${\rm Sp}(X)$\footnote{In \cite{arXiv:2305.14789,arXiv:2308.08117}, they used ${\rm Sp_{alg}}(X)$ for the algebraic special set, we omit the subscript ``alg'' in this article.} of  a projective variety $X$ over a field $K$ is the Zariski closure in $X$ of the union of the images of all non-constant rational maps from abelian varieties over $\ol{K}$ to $X_{\ol{K}}$.
\begin{theorem}\label{main}
Let $K$ be a finitely generated field over a field $k$ of characteristic \emph{0} such that $k$ is algebraically closed in $K$. Let $X$ be a projective variety over $K$ with a finite morphism $f\colon X\to A$ to an abelian variety $A$ over $K$. Let $Z$ be the Zariski closure of $(X\backslash{\rm Sp}(X))(K)$ in $X$.

Then there is a finite set $\{Z_{1},\dots, Z_{r}\}$ of distinct closed subvarieties of $Z$ containing all irreducible components of $Z$ and satisfying the following conditions:

\begin{enumerate}[label=\normalfont(\arabic*),leftmargin=*]

\item For each $i$, the normalization $Z_{i}'$ of $Z_{i}$ is constant, i.e., there is a projective variety $T_{i}$ over $k$ and a $K$-isomorphism $\rho_{i}\colon T_{i}\times_{k}K\to Z_{i}'$.

\item The set $(X\backslash{\rm Sp}(X))(K)$ is contained in the union over $i=1,\dots, r$ of the images of the composition
$$
T_{i}(k)\ \lra\ (T_{i}\times_{k}K)(K)\stackrel{\rho_{i}}{\lra} Z'_{i}(K)\ \lra\ Z_{i}(K)\ \lra\ Z(K)\ \lra\ X(K).
$$
\end{enumerate}

\end{theorem}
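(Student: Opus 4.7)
The plan is to follow the Xie--Yuan strategy: translate the hypothetical existence of a Zariski-dense subset of $(X\setminus{\rm Sp}(X))(K)$ not covered by constant subvarieties into the existence of a nonconstant holomorphic map $\CC\to\CX_{b}$, and deduce a contradiction from Nevanlinna-theoretic constraints. First I would replace $K$ by $k(B)$ for a smooth projective $k$-variety $B$, spread $f\colon X\to A$ out to a proper family $\CX\to B$, $\CA\to B$ (shrinking $B$ if necessary), and interpret each $x\in X(K)$ as a rational section $\sigma_{x}\colon B\dashrightarrow\CX$. Assuming toward contradiction that a component $Z_{0}$ of the Zariski closure of $(X\setminus{\rm Sp}(X))(K)$ admits no cover by constant subvarieties, the Brody-type limiting procedure of \cite{XY1,XY2} applied to a carefully chosen sequence of sections should produce a nonconstant holomorphic $\varphi\colon\CC\to\CX_{b}$ whose image is Zariski-dense in the fiber of $Z_{0}$ over a very general point $b\in B(\CC)$.

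The algebraic-geometric reduction uses the Chow trace of $A$. By Poincar\'e reducibility, after an isogeny we may write $A\sim A_{0}\times A_{1}$ with $A_{0}=A^{(K/k)}\otimes_{k}K$ the constant part and $A_{1}$ traceless, with projection $q\colon A\to A_{1}$. Taking the Stein factorization of $q\circ f$ yields $X\to X_{1}\to A_{1}$ in which $X_{1}\to A_{1}$ is finite, so the traceless case of \cite{XY2} applies to $X_{1}$ and governs $(X_{1}\setminus{\rm Sp}(X_{1}))(K)$. The remaining rational points of $X$ are those whose image in $X_{1}$ lies in a constant subvariety; their fibers in $X$ map into translates of $A_{0}$, and descent through the trace together with Faltings's theorem over $k$ should identify the relevant constant models in $X$. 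The new algebraic-geometric input required is a compatibility statement for ${\rm Sp}$ under Stein factorization and trace projection, ensuring that the above decomposition interacts correctly with the condition of lying outside ${\rm Sp}(X)$, and that the constant models produced at each step assemble into the finite collection $\{Z_{1},\dots,Z_{r}\}$ of the conclusion.

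The Nevanlinna ingredient closes the loop. Given an entire curve $\varphi\colon\CC\to\CX_{b}$ with Zariski-dense image in the fiber of $Z_{0}$, Bloch--Ochiai applied to $f\circ\varphi$ forces its image to be a translate of an abelian subvariety $A'\subset\CA_{b}$; the desired conclusion is that $\varphi$ itself is contained in $f^{-1}(A')\subset{\rm Sp}(X)_{b}$, contradicting the dense-image assumption. In the hyperbolic case of \cite{XY1} this was trivial (no entire curves at all) and in the traceless case of \cite{XY2} it followed from rigidity of traceless families. In the mixed setting entire curves are plentiful, and I expect this to be the main obstacle: one needs a refinement of Bloch--Ochiai -- plausibly along the lines of Noguchi--Winkelmann--Yamanoi or Yamanoi's work on the second main theorem for abelian varieties -- adapted to the fibration $\CX\to\CA$ whose base decomposes into a constant and a traceless direction. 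Granting such a Nevanlinna statement, combining it with the trace decomposition and an induction on $\dim X$ applied to $f^{-1}({\rm Sp}(X_{1}))$ should yield the full structural conclusion of the theorem.
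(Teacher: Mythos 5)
Your overall framework matches the paper's starting point: translate rational points into sections of an integral model, apply the Brody-type limiting procedure of Xie--Yuan to produce entire curves in fibers, decompose $A$ (up to isogeny) into a constant part and a traceless part, and use Nevanlinna theory to rule out the entire curves. You also correctly identify that the crux is what to do when the special set permits abundant entire curves. However, the proposal then diverges from the paper in essentially every concrete step, and the gap you yourself flag as ``the main obstacle'' is filled by a very different mechanism than a refinement of Bloch--Ochiai.

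Concretely, the Stein-factorization reduction you propose does not work as stated. Since $q\circ f\colon X\to A_{1}$ has positive-dimensional fibers (each fiber is a finite cover of a translate of $A_{0}$), the Stein factorization $X\to X_{1}\to A_{1}$ has $\dim X_{1}<\dim X$, and $X_{1}$ is not birational to $X$. Applying \cite{XY2} to $X_{1}$ at most controls $(X_{1}\setminus{\rm Sp}(X_{1}))(K)$; it does not control the rational points of $X$ lying over ${\rm Sp}(X_{1})$, and there is no obvious relation between $f_{1}^{-1}({\rm Sp}(X_{1}))$ and ${\rm Sp}(X)$. More importantly, the truly hard case, which the paper isolates explicitly, is when the finite map $X\to A_{1}\times T_{K}$ (with $T$ constant and heights unbounded in the $A_{1}$-direction) is \emph{surjective}: then the image of ${\rm Sp}(X)$ in $A_{1}\times T_{K}$ may contain a whole fiber $A_{1}\times\{t_{\infty}\}$, so the limit entire curve can land entirely inside $f(\CZ)$ and no contradiction comes out of the Xie--Yuan construction alone. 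Your proposal does not address this possibility.

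The paper's actual new ingredients, all absent from your sketch, are: (i) the tangency observation (Proposition \ref{qie0}/Corollary \ref{qie}) that a section hitting the ramification divisor is tangent to the branch locus $\CD_{\rm red}$, which converts a height-positivity statement into a count of tangencies of the limit curve with a reduced divisor; (ii) a key lemma (Lemma \ref{key}) on limits of tangent spaces of possibly-nonreduced fibers, needed because $(\CD_{\rm red})_{t_{\infty}}$ may be nonreduced; (iii) weak semistable reduction to remove $T$-vertical components of the branch locus; (iv) the canonical model of $X$ (Hacon--McKernan extension) to get a $\QQ$-Cartier ample canonical divisor so that the ramification divisor is $\QQ$-Cartier and fibrewise big; and (v) a quantitative Nevanlinna argument using Yamanoi's second main theorem (Theorem \ref{Nevanlinna}) and a careful limit analysis (Lemma \ref{limits}) to show that the tangencies both must and cannot account for a positive proportion of intersections. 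Bloch--Ochiai plays no role here because the limit curve is a linear entire curve by construction; the contradiction comes from counting-function estimates, not from constraining the Zariski closure of an arbitrary entire curve. So while your high-level outline is in the right spirit, the step you identify as the obstacle is precisely where the paper's innovations enter, and the particular path you propose (Stein factorization plus Bloch--Ochiai plus induction) would not close the gap.
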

Roughly speaking, the theorem says that all the rational points of $X$ outside the special locus come from the $k$-points of the $T_{i}$'s. Note that the set $\{Z_{1},\dots, Z_{r}\}$ contains all irreducible components of $Z$, but it may contain more elements to cover all rational points as in $(2)$; see \cite[Example 2.12]{arXiv:2305.14789} for an example.

Theorem \ref{main} confirms the geometric Bombieri--Lang conjecture formulated in \cite[Conjecture 1.1]{arXiv:2305.14789} when $X$ has a finite map $f\colon X\to A$ to an abelian variety $A$. Furthermore, it generalizes \cite[Theorem 1.2]{arXiv:2305.14789} and \cite[Theorem 1.1]{arXiv:2308.08117} where one requires $X$ to be hyperbolic (i.e. ${\rm Sp}(X)=\emptyset$) and the other requires the $K/k$-trace of $A$ is 0. We refer to \cite{zbMATH05233949} for more details on Chow's trace of abelian varieties. Note that we do not assume that $f$ is surjective in the theorem. In particular, $X$ is allowed to be a closed subvariety of $A$, and in this case, versions of the geometric Bombieri--Lang conjecture were proved by \cite{zbMATH00721811, zbMATH03832101, zbMATH00016118, zbMATH00224044, zbMATH00957009}.

We have the following corollary, which recovers \cite[Theorem 1.1]{arXiv:2308.08117}.
\begin{corollary}\label{recover}
Let $K$ be a finitely generated field over a field $k$ of characteristic \emph{0} such that $k$ is algebraically closed in $K$. Let $X$ be a projective variety over $K$ with a finite morphism $f\colon X\to A$ to an abelian variety $A$ over $K$. Then $($after suitable finite extensions of $K$ and $k)$ the image of $(X\backslash{\rm Sp}(X))(K)$ in $A(K)$ is contained in a finite union in $A$ of some translations of $(A^{(K/k)})_{K}$. Here $A^{(K/k)}$ is the Chow's trace of $A$ over $k$.

In particular, if $X$ is of \emph{general type} and $A$ is not defined over $k$, then $X(K)$ is not Zariski dense in $X$. If the $K/k$-trace of $A$ is trivial, then $(X\backslash{\rm Sp}(X))(K)$ is finite.
\end{corollary}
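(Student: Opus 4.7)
The plan is to deduce the corollary from Theorem~\ref{main} by combining it with the universal property of the $K/k$-trace.

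First I would apply Theorem~\ref{main} to obtain the finite collection $\{Z_1,\dots,Z_r\}$ and the isomorphisms $\rho_i\colon T_i\times_k K\xrightarrow{\sim} Z_i'$. For each $i$, consider the composition
\[
g_i\colon T_i\times_k K\xrightarrow{\rho_i} Z_i'\to Z_i\hookrightarrow X\xrightarrow{f} A.
\]
After enlarging $k$ to a finite extension (and $K$ correspondingly) so that each $T_i$ acquires a $k$-rational point $t_i^0$, and passing to a resolution $\tilde T_i\to T_i$ over $k$ if needed, the translated map $g_i-g_i((t_i^0)_K)\colon \tilde T_i\times_k K\to A$ sends the base point to $0$ and hence factors through the Albanese morphism $\tilde T_i\to\mathrm{Alb}(\tilde T_i)$ formed over $k$. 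The resulting homomorphism $\mathrm{Alb}(\tilde T_i)\times_k K\to A$ is between abelian varieties over $K$ and originates from a constant abelian variety, so by the universal property of the $K/k$-trace it factors through $(A^{(K/k)})_K\hookrightarrow A$. Thus $g_i(T_i\times_k K)\subset g_i((t_i^0)_K)+(A^{(K/k)})_K$, and combining over $i$ with Theorem~\ref{main}(2) gives the first assertion.

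For the general-type statement, note that the finiteness of $f$ preserves Kodaira dimension, so $f(X)$ is of general type whenever $X$ is, and the Ueno--Kawamata theorem on subvarieties of abelian varieties yields $\mathrm{Sp}(f(X))\subsetneq f(X)$. Since any morphism from an abelian variety to $A$ has image a translate of an abelian subvariety, $f$ carries the special locus of $X$ into that of $f(X)$, so $\mathrm{Sp}(X)\subset f^{-1}(\mathrm{Sp}(f(X)))\subsetneq X$. On the other hand, $A$ not being defined over $k$ means $(A^{(K/k)})_K\subsetneq A$, so the finite union of translates obtained above is a proper closed subvariety of $A$, and its preimage under the finite map $f$ is proper in $X$. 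Therefore $X(K)$ lies in the union of two proper closed subvarieties of $X$ and is not Zariski dense. For the trivial-trace statement, $A^{(K/k)}=0$ makes each translate a single $K$-point, so the image of $(X\setminus\mathrm{Sp}(X))(K)$ in $A(K)$ is finite, and the finiteness of $f$ forces $(X\setminus\mathrm{Sp}(X))(K)$ itself to be finite.

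The main technical obstacle is the field-extension bookkeeping together with handling possibly singular $T_i$ in the Albanese construction; passing to a resolution and invoking the universal property of the Chow trace disposes of both. A secondary point is justifying $\mathrm{Sp}(X)\subset f^{-1}(\mathrm{Sp}(f(X)))$ for finite $f$, which reduces to the standard fact that a morphism from an abelian variety lands in a translate of an abelian subvariety of $A$.
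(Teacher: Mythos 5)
The first assertion is handled by an approach that is \emph{equivalent in spirit} to the paper's: the paper cites \cite[Lemma~4.9]{XY1} directly, while you unwind that lemma via the Albanese morphism of (a resolution of) $T_i$ and the universal property of the Chow $K/k$-trace. That reproof is correct and makes the argument more self-contained, at the cost of a few routine reductions (field extension to get a $k$-point, resolution to make the Albanese morphism available).

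There is, however, a genuine gap in your argument for the second assertion. You write that ``the finiteness of $f$ preserves Kodaira dimension, so $f(X)$ is of general type whenever $X$ is.'' This is false: for a finite surjective map $f\colon X\to Y$ one has $\kappa(X)\ge\kappa(Y)$, but the inequality can be strict, and indeed the archetypal case of this paper is $f\colon X\to A$ a ramified cover with $f(X)=A$, where $X$ is of general type while $f(X)=A$ has Kodaira dimension $0$. Consequently ${\rm Sp}(f(X))$ can equal $f(X)$ (e.g.\ ${\rm Sp}(A)=A$), so your chain $\mathrm{Sp}(X)\subset f^{-1}(\mathrm{Sp}(f(X)))\subsetneq X$ does not yield $\mathrm{Sp}(X)\subsetneq X$. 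The correct input at this point is \cite[Corollary~4.3]{XY1} (essentially Ueno--Kawamata), stated for projective varieties $X$ of general type admitting a finite map to an abelian variety, not merely for subvarieties of abelian varieties; this gives $\mathrm{Sp}(X)\subsetneq X$ directly without passing through $f(X)$. Once that is fixed, the rest of your argument for the second and third assertions goes through.
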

\begin{proof}
Let $T_{i}, Z_{i}$ be as in the statement of Theorem \ref{main}. By \cite[Lemma 4.9]{arXiv:2305.14789}, after a finite extension of $K$, the image of $T_{i}\times_{k}K\to Z_{i}\to X\to A$ is contained in some translation $(A^{(K/k)})_{K}+a_{i}\subseteq A$ where $a_{i}\in A(K)$ for all $i$. This proves the first statement. Now if $X$ is of general type and $A$ is not defined over $k$, then ${\rm Sp}(X)\subsetneqq X$ by \cite[Corollary 4.3]{arXiv:2305.14789} (which is essentially due to Ueno and Kawamata) and $(A^{(K/k)})_{K}\subsetneqq A$, which implies that $X(K)$ is not Zariski dense in $X$. The third statement is obvious from the first one.
\end{proof}
To prove Theorem \ref{main}, we will first prove (in \S\ref{3}) the following variant, which generalizes \cite[Corollary 1.2]{arXiv:2308.08117}. Recall that a normal projective $K$-variety $X$ is said to have a \emph{maximal Albanese dimension} if the Albanese morphism $X_{\ol{K}}\to{\rm Alb}(X_{\ol{K}})$ (via a base point of $X(\ol{K})$) is generically finite onto its image. For convenience, we say a projective $K$-variety $X$ has a \emph{maximal Albanese dimension} if the normalization of $X$ has.

\begin{theorem}[maximal Albanese dimension]\label{albanese}
Let $K$ be a finitely generated field over a field $k$ of characteristic \emph{0} such that $k$ is algebraically closed in $K$. Let $X$ be a projective variety of \emph{general type} over $K$. Assume that $X$ has a maximal Albanese dimension. If $X(K)$ is Zariski dense in $X$, then the Albanese variety of $($the normalization of\emph{)} $X$ is defined over $k$ and $X$ is birational to $T_{K}$ for some projective variety $T$ over $k$. Moreover, the complement of the image of ${\rm Im}(T(k)\to T_{K}(K))$ in $X(K)$ $($via a birational map from $T_{K}$ to $X)$ is not Zariski dense in $X$.

If in addition the Albanese morphism of $($the normalization of\emph{)} $X$ is finite, then the normalization of $X$ is isomorphic to the base change $T_{K}=T\times_{k}K$ for a normal projective variety $T$ over $k$.
\end{theorem}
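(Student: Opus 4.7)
My plan is to combine the function-field Mordell--Lang theorem for subvarieties of abelian varieties with an adaptation of the Xie--Yuan entire-curve construction. After reducing to the case where $X$ is normal, let $a\colon X\to A:={\rm Alb}(X)$ be the Albanese morphism and $X':=a(X)\subseteq A$ its scheme-theoretic image. The maximal Albanese dimension hypothesis makes $a$ generically finite onto $X'$, and together with the Zariski density of $X(K)$ this yields Zariski density of $X'(K)$ in $X'$. I then apply the function-field Mordell--Lang theorem (Hrushovski in characteristic $0$, building on Raynaud, Buium, Faltings) to the subvariety $X'\subseteq A$ and the group $\Gamma=A(K)$, which by Lang--N\'eron is finitely generated modulo the trace part. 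The theorem, combined with irreducibility of $X'$, yields $X'=a_0+\tau((Y_0)_K)$ for some $a_0\in A(K)$ and a closed $k$-subvariety $Y_0\subseteq A_0:=A^{(K/k)}$: any abelian-subvariety factor in the conclusion of Mordell--Lang would itself need Zariski-dense $K$-points, which by Lang--N\'eron forces it to lie inside $\tau((A_0)_K)$ and hence to be absorbed into the constant part. By the universal property of the Albanese, the subvariety $X'-X'\subseteq\tau((A_0)_K)$ generates $A$; hence $A=\tau((A_0)_K)$, and the universality of the Chow trace then identifies $A$ with the base change of an abelian variety over $k$.

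After translating by $-a_0$, I may assume $X'=\tau((Y_0)_K)$ is the base change of a $k$-subvariety of $A_0$, and the remaining task is to descend the generically finite cover $X\to X'$ itself to a cover of $k$-varieties. I expect this to be the main obstacle. The plan is to adapt the entire-curve construction of \cite{XY1,XY2}: the Zariski density of $X(K)$ produces a family of entire holomorphic maps $\CC\to X^{\mathrm{an}}$ (after spreading $X$ out over a $k$-variety with function field $K$ and analytifying at an Archimedean place). Their images in the now-constant abelian variety $A$ factor through translates of constant subvarieties by Bloch--Ochiai--Kawamata; applying the Second Main Theorem of Nevanlinna theory to the cover $X\to X'$ (whose ramification divisor is large since $X$ is of general type) then forces the cover itself to be isotrivial. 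A projective model $T$ of the descended cover over $k$ yields the birational equivalence $X\dashrightarrow T_K$.

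The remaining claims are formal. For the complement statement, identify $X$ with $T_K$ birationally; any $K$-point of $T$ outside $T(k)$ has image in $A_0$ outside $A_0(k)$ (since $T\to A_0$ is finite), so the image of $T(K)\setminus T(k)$ in $A_0$ lies in $A_0(K)\setminus A_0(k)$, which is finitely generated by Lang--N\'eron. The Mordell--Lang theorem combined with general type of $T$ then confines the Zariski closure of this image to a proper subvariety of $T_K$, and the complement claim follows after pulling back along the finite map $T_K\to A_{0,K}$. For the moreover statement, when the Albanese morphism $a$ is finite, $X\to X'$ is finite. Taking $T$ to be the normalization of the model produced above makes $T_K\to X'$ finite as well. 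Then $X\to X'$ and $T_K\to X'$ are finite morphisms of normal projective $K$-varieties sharing the common function field $K(X)=K(T_K)$, so each coincides with the normalization of $X'$ in this extension, giving $X\cong T_K$.
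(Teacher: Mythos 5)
Your proposal takes a genuinely different route from the paper. Where the paper proves Theorem \ref{albanese} by first reducing to transcendence degree $1$ over $k$, then invoking Proposition \ref{ht} (bounded heights on $(X\setminus{\rm Sp}(X))(K)$, which rests on all of \S\ref{5}) and finally a moduli-of-sections argument (Lemma \ref{const}) to extract the constant model $T$, you instead first invoke the function-field Mordell--Lang theorem to show that the Albanese $A$ and the Albanese image $X'$ are constant, and then separately try to descend the generically finite cover $X\to X'$. The first part of your plan is plausible, although the sub-argument you give for killing the abelian-subvariety factor $B$ in the Mordell--Lang conclusion (Zariski-dense $K$-points on $B$ plus Lang--N\'{e}ron) is confused; the correct and standard reason is that a nontrivial $B$ would stabilize $X'$ and contradict its general type (Ueno's fibration). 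Note also that you skip two reductions the paper performs explicitly: the Lefschetz-principle/descent step that reduces ${\rm trdeg}(K/k)$ to $1$, and the Stein-factorization step that replaces the Albanese morphism by a finite one.

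The real gap is the second part. You write that descending $X\to X'$ ``is the main obstacle'' and then describe a plan (``produce entire curves\dots apply Bloch--Ochiai--Kawamata\dots apply SMT to the cover $X\to X'$\dots forces the cover to be isotrivial'') that names tools but supplies no argument; this is precisely where the paper's new content lives. Even after your step 1, the abelian variety is constant but $A(K)$ is still infinite modulo $A^{(K/k)}(k)$ by Lang--N\'{e}ron, so one still has to rule out unbounded-height sequences, and doing so for a cover of a \emph{constant} abelian variety is exactly the hard ``surjective'' case of Proposition \ref{generic1}: it requires the tangency observation (Proposition \ref{tangent}/Corollary \ref{qie}), the limit-of-tangent-spaces Lemma \ref{key}, the weak semistable reduction step to kill $T$-vertical ramification, the passage to the canonical model to make $K_X$ ample $\QQ$-Cartier, and the quantitative comparison of counting functions culminating in Lemma \ref{subtle}. ``Apply SMT to the cover'' does not substitute for this; Yamanoi's SMT is applied to entire curves \emph{inside the abelian variety} against the reduced branch divisor, and it is the interplay with the tangency count that produces the contradiction. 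Finally, the ``complement'' and ``moreover'' claims are also treated too loosely: the paper derives them from the Chow-variety construction of Lemma \ref{const} rather than from another run of Mordell--Lang, and your observation that $T(\overline k)\cap T(K)=T(k)$ needs to be stated and used, not assumed. So the proposal correctly identifies the skeleton but omits the ideas that constitute the proof.
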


\subsection{Basic setups}\label{setup}
Now we present the reductions and setups for the proof of Theorem \ref{main}. The essential thing to prove is the following proposition, which is the goal of \S\ref{5}.

\begin{proposition}[Proposition \ref{generic1}]\label{red-introduction}
Let $K$ be the function field of a smooth projective curve $B$ over $\CC$. Let $X$ be a projective variety of \emph{general type} over $K$. Let $A$ be an abelian variety over $K$ and let $T$ be a projective variety over $\CC$. Let $f\colon X\to A\times_{K} T_{K}$ be a finite morphism over $K$. Let $\{x_{n}\}_{n\geq1}\subseteq X(K)$ be a \emph{generic} sequence of rational points in $X$; that is, any infinite subsequence of $\{x_{n}\}_{n\geq1}$ is Zariski dense in $X$. If the image of $x_{n}$ in $T_{K}(K)$ lies in ${\rm Im}(T(\CC)\to T_{K}(K))$ for each $n\geq1$, then the height of $\{x_{n}\}_{n\geq1}$ is bounded.
\end{proposition}
Here ``the height of $\{x_{n}\}_{n\geq1}$ is bounded'' means that the Weil height of $\{x_{n}\}_{n\geq1}$ with respect to an ample line bundle $L$ on $X$ is bounded. This definition is independent of the choice of $L$ and the Weil height associated to $L$.\medskip

The derivation of the main Theorem \ref{main} from Proposition \ref{red-introduction} will be given in \S\ref{3}, involving reduction steps similar to those in \cite{arXiv:2305.14789}.

\begin{remark}\quad\medskip

\noindent (1) We explain that $T$ in the proposition plays the role of the ``constant part''. Even if we are only concerned with finite covers of the abelian variety $A$ (i.e. the case where $T$ is trivial), when the $K/\CC$-trace of $A$ is nontrivial, we sometimes transform the problem into the situation where $T$ is nontrivial. In this transformation, $T_{K}$ becomes a constant quotient abelian variety of $A$; see \S\ref{5.3}, ``Basic setups'' for more details.\medskip

\noindent (2) In Proposition \ref{red-introduction}, if we take $T$ to be trivial and $f=(f_{0})_{K}$ arises as the base change of a morphism $f_{0}\colon X_{0}\to A_{0}$ of complex varieties, then $x_{n}$ can be seen as a holomorphic map $x_{n}\colon B\to X_{0}$ and the boundedness of the height of $\{x_{n}\}_{n\geq1}$ is equivalent to the boundedness of the degree of $\{x_{n}(B)\}_{n\geq1}$ in $X_{0}$ (with respect to some ample line bundle on $X_{0}$). This is known by \cite[Corollary 1(3)]{zbMATH06463522}. Therefore, Proposition \ref{red-introduction} is a generalization of this result of Yamanoi.
\end{remark}

\noindent Now we focus on the proof of Proposition \ref{red-introduction}. For convenience, we assume that $f$ is surjective and $A$ has trivial $K/\CC$-trace. This actually captures the most essential case. Let $f(x_{n})=(s_{n},t_{n})$ be the image of $x_{n}$ in $A(K)\times T_{K}(K)$. Then by our assumption, $\{s_{n}\}_{n\geq1}$ is generic in $A$ and $\{t_{n}\}_{n\geq1}$ is generic in $T$. By the condition in the proposition, we see that $t_{n}\in T(\CC)$ for all $n\geq1$. By compactness, after taking a subsequence we may assume that $t_{n}$ tends to $t_{\infty}\in T(\CC)$ in the Euclidean topology.

Take integral models $\CX,\CA$ of $X,A$ over $B$. Then $\CA\times_{\CC}T$ is an integral model of $A\times_{K}T_{K}$ over $B$. Replace $\CX$ by the Zariski closure of ${\rm Im}(X\to X\times_{K}(A\times_{K}T_{K})\to\CX\times_{B}(\CA\times_{\CC}T))$, we can assume that $f\colon X\to A\times T_{K}$ extends to a $B$-morphism $\CX\to\CA\times_{\CC}T$. For convenience we still denote this morphism by $f$. By abuse of notation, we often view $x_{n}$ as a section $x_{n}\colon B\to\CX$ of $\CX\to B$ and similarly for $s_{n}$.

We prove the proposition by contradiction. Assume to the contrary that the height of $\{x_{n}\}_{n\geq1}$ tends to infinity as $n\to\infty$. Then the height of $\{(s_{n},t_{n})\}_{n\geq 1}$ also tends to infinity. As $t_{n}$ is constant for all $n$, we deduce that the height of $\{s_{n}\}_{n\geq1}$ tends to infinity as $n\to\infty$.\medskip\medskip

\noindent\textbf{Xie--Yuan's idea}\medskip

\noindent When $T$ is trivial, the image of the special set $Z={\rm Sp}(X)$ in $A$ is properly contained in $A$. As $\{x_{n}\}_{n\geq1}$ is generic (in $X$), $\{s_{n}\}$ is also generic (in $f(X)$). After removing finitely many points, we can assume that $s_{n}\notin f(Z)$ for all $n$. Based on the unboundedness of the height of $\{s_{n}\}_{n\geq1}$, in \cite{arXiv:2308.08117} Xie and Yuan constructed a linear entire curve in some fibre $\CA_{b} \,(b\in B(\CC))$ which is the limit of some re-parametrization of the sections $\{s_{n}\}_{n\geq1}\colon B\to\CA$ such that this entire curve is not contained in $f(\CZ_{b})$, where $\CZ$ is the Zariski closure of $Z$ in $\CX$. Then they used Brody lemma to lift this entire curve to an entire curve on $\CX_{b}$ which is not contained in $\CZ_{b}$. On the other hand, \cite{arXiv:2305.14789} showed that the $b$ above can be chosen so that the analytic special set\footnote{For a projective variety $X$ over $\CC$, the \emph{analytic special set} of $X$ is defined to be the Zariski closure of the union of the images of all nonconstant holomorphic maps $\phi\colon \CC\to X$ in $X$.} of $\CX_{b}$ equals to $\CZ_{b}$. This leads to a contradiction.

However, when $T$ is not trivial, the situation gets more complicated. In this case, the image of the special set $Z={\rm Sp}(X)$ in $A\times_{K}T_{K}$ may contain $A\times\{t_{\infty}\}$. If this happens, any limit point of the sections $(s_{n},t_{n})\colon B\to\CA\times_{\CC}T(n\geq1)$ is contained in $\CA\times\{t_{\infty}\}\subseteq f(\CZ)$. Thus the strategy in \cite{arXiv:2308.08117} is not sufficient.\medskip\medskip

\noindent\textbf{New idea}\medskip

\noindent To treat the general case, we will employ a rather different approach---a counting argument. We still use the construction of linear entire curves and control their positions. But instead of studying the positions of the entire curves relative to ${\rm Sp}(X)$, we focus on the ramification divisor $R$ of $f$. We will derive a contradiction by estimating the intersection numbers of the sections $\{x_{n}\}_{n\geq1}$ with the Zariski closure $\CR$ of $R$ in $\CX$. Notably, this approach also yields a new proof for \cite[Theorem 1.1]{arXiv:2308.08117}.

\begin{remark}
In fact, we will not use anything about the special set ${\rm Sp}(X)$ in the proof of Proposition \ref{red-introduction} (and also Proposition \ref{generic1}). The only thing we will use for it is in the reduction steps in \S\ref{3}, especially the Green--Griffiths--Lang conjecture for finite covers of abelian varieties, which is purely algebraic geometry. Therefore, although both \cite{arXiv:2308.08117} and the present article use the construction of linear entire curves and require controlling their positions, their underlying philosophies are quite different.
\end{remark}

\subsection{Key idea: a counting argument}\label{idea}
Now we begin to explain the crucial counting argument. We will first present the geometric intuition and then describe the method for its rigorous formulation.\medskip\medskip

\noindent\textbf{Tangency}\medskip

\noindent The idea originates from a ``tangency'' observation. To illustrate it succinctly and intuitively, we temporarily assume that $X$ is smooth. In general we cannot make this assumption, but there is no big difference in this step. We will come back to the difficulty of singular case in \S\ref{jishu}.

Denote by $R\subseteq X$ the ramification divisor of the finite surjective morphism $f\colon X\to A\times_{K}T_{K}$ (see \S\ref{1.3} for the definition). Let $\CR$ be the Zariski closure of $R$ in $\CX$. Denote $D=f(R)$ and $\CD=f(\CR)$. For $b\in B, t\in T$, denote by $\CX_{b,t}$ the fibre of the natural morphism $\CX\to B\times T$ over $(b,t)$. Similarly we can define $\CR_{b,t},\CD_{b,t}$. For any point $x\in X(K)$, let $(s,t)=f(x)$ be the image of $x$ in $A(K)\times T_{K}(K)$. View $x$ as a section $B\to\CX$ and view $(s,t)$ as a section $B\to\CA\times_{\CC}T$. We have the following proposition.

\begin{proposition}\label{qie0}
If $b\in B(\CC)$ such that $x(b)\in\CR$, then the section $(s,t)\colon B\to\CA\times_{\CC}T$ is tangent to $\CD_{\rm red}$ at the point $(s(b),t(b))$. More precisely, the image of the tangent map ${\rm d}(s,t)\colon T_{b}B\to T_{(s(b),t(b))}(\CA\times_{\CC}T)$ at $b$ is contained in $T_{(s(b),t(b))}(\CD_{\rm red})$.
\end{proposition}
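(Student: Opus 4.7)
My plan is to reduce Proposition~\ref{qie0} to a purely local statement about the differential of a finite ramified morphism between smooth varieties, and then to verify that local statement by a short divisor-theoretic computation.

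First, by the chain rule applied to $(s,t)=f\circ x$ one has $d(s,t)_{b}=df_{x(b)}\circ dx_{b}$, so the image of $d(s,t)_{b}$ inside $T_{(s(b),t(b))}(\CA\times_{\CC}T)$ is contained in the image of $df_{x(b)}$. It therefore suffices to establish the stronger local assertion
$$
df_{x(b)}\bigl(T_{x(b)}\CX\bigr)\;\subseteq\;T_{(s(b),t(b))}\,\CD_{\rm red}. \qquad(\star)
$$

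Next, I would localise around $x(b)$. Under the standing assumption that $X$ is smooth, and after shrinking $B$ around $b$ to exclude finitely many bad fibres, one may arrange that $\CX$ is smooth at $x(b)$, that $\CA\times_{\CC}T$ is smooth at $f(x(b))$, and that $f$ is finite on an analytic neighborhood $U$ of $x(b)$. In this local setup $\CR\cap U$ coincides with the ramification divisor of $f|_{U}$ and $\CD_{\rm red}$ coincides near $f(x(b))$ with the reduced branch locus. Thus $(\star)$ is a consequence of the following standard local fact: if $g\colon M\to N$ is a finite surjective morphism of smooth complex varieties of the same dimension with ramification divisor $R_{g}$ and branch divisor $D_{g}=g(R_{g})$, then $dg_{x}(T_{x}M)\subseteq T_{g(x)}(D_{g})_{\rm red}$ for every $x\in R_{g}$.

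The local fact is proved by a divisor computation. Each component $R_{i}$ of $R_{g}^{\rm red}$ passing through $x$ carries a ramification index $e_{i}\geq 2$, so the divisorial pullback of the reduced branch locus satisfies
$$
g^{*}(D_{g})_{\rm red}\;=\;\sum_{i} e_{i}\,R_{i}\,+\,E\;\geq\;2\,R_{g}^{\rm red}\qquad\text{near } x,
$$
where $E$ is an effective divisor collecting components of $g^{-1}(D_{g})_{\rm red}$ that are not contained in $R_{g}$ (each appearing with multiplicity $1$). Passing to ideal sheaves, $g^{*}\CI_{(D_{g})_{\rm red}}\subseteq \CI_{R_{g}^{\rm red}}^{2}\subseteq \mathfrak{m}_{M,x}^{2}$; equivalently, for every $h\in\CI_{(D_{g})_{\rm red},\,g(x)}$ the pullback $g^{*}h$ lies in $\mathfrak{m}_{M,x}^{2}$, so $d(g^{*}h)|_{x}=0$. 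Dualising $(dg_{x})^{*}\bigl(dh|_{g(x)}\bigr)=0$ across all such $h$ then yields $dg_{x}(T_{x}M)\subseteq T_{g(x)}(D_{g})_{\rm red}$, proving $(\star)$ and hence the proposition.

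The main obstacle I anticipate is not the chain rule or the dualisation step but controlling the pointwise divisor inequality $g^{*}(D_{g})_{\rm red}\geq 2\,R_{g}^{\rm red}$ at the chosen point $x(b)$, rather than merely at a generic point of $\CR$; further components of $g^{-1}(D_{g})_{\rm red}$ may pass through $x(b)$ and these must be shown to contribute non-negatively (equivalently, harmlessly) to the estimate. In the genuinely singular case of $X$ one must also trace $\CR$ and $\CD_{\rm red}$ through a normalisation or a partial desingularisation, but as the authors indicate this introduces no new ingredient for the tangency mechanism itself.
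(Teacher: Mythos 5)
Your divisor-theoretic approach is a genuinely different route from the one in the paper. The paper (Proposition~\ref{tangent} and Corollary~\ref{qie}) argues by a dimension count at generic points: at a point $x$ where $(\widetilde{\CR}_{0})_{\rm red}\to\CD_{\rm red}$ is \'etale, the image of $\mathrm{d}(f\circ\pi)_{x}$ is a proper subspace of $T_{f(x)}(\CA\times_{\CC}T)$ (non-surjectivity of the differential at a ramification point) which contains the $d$-dimensional $T_{f(x)}\CD_{\rm red}$ coming from the \'etale restriction, so it equals it; the tangency then propagates from that dense open set to all of $\widetilde{\CR}_{0}$ because the incidence $\{(x,v)\in T\widetilde{\CX}\mid \mathrm{d}(f\circ\pi)(v)\in T\CD_{\rm red}\}$ is Zariski closed. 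Your argument reads the tangency directly off the second-order vanishing $g^{*}\CI_{(D_{g})_{\rm red}}\subseteq\CI_{R_{g}^{\rm red}}^{2}\subseteq\mathfrak{m}_{x}^{2}$, which is shorter and more elementary, and avoids the closure argument altogether. Moreover your ``anticipated obstacle'' is not an obstacle: $g^{*}(D_{g})_{\rm red}\geq 2R_{g}^{\rm red}$ is an inequality of effective Cartier divisors whose coefficients are determined at generic points (ramification indices $\geq 2$) and therefore hold globally, and the extra components of $g^{-1}(D_{g})_{\rm red}$ through $x$ contribute a nonnegative $E$, as you already note; nothing pointwise needs to be controlled.

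The real gaps are in your localisation step. First, ``after shrinking $B$ around $b$ to exclude finitely many bad fibres, one may arrange that $\CX$ is smooth at $x(b)$'' is not a valid move: the geometry of $\CX$ at $x(b)$ does not change when you discard other points of $B$, and the proposition must hold at \emph{every} $b$ with $x(b)\in\CR$, including the special ones. Second, the assertion that ``$\CR\cap U$ coincides with the ramification divisor of $f|_{U}$'' near $x(b)$ is false in general: the model map $\CX\to\CA\times_{\CC}T$ can have extra $B$-vertical ramification that is not part of $\CR$ (the closure of the generic-fibre ramification), so your local $(D_{g})_{\rm red}$ may strictly contain $\CD_{\rm red}$ near $f(x(b))$ and your local fact then only yields tangency to the larger locus. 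Both issues vanish if you run the computation globally with $\CR$ and $\CD$ themselves: the coefficient of each component of $\CR_{\rm red}$ in $f^{*}\CD_{\rm red}$ is its ramification index at the generic point, which lies in the generic fibre $X$ and is therefore $\geq 2$, giving $f^{*}\CD_{\rm red}\geq 2\CR_{\rm red}$ and hence $f^{*}\CI_{\CD_{\rm red}}\subseteq\CI_{\CR_{\rm red}}^{2}\subseteq\mathfrak{m}_{x(b)}^{2}$ at every $b$ with $x(b)\in\CR$, with no shrinking and no finiteness hypothesis on the model map. What this global version does require is that $\CR_{\rm red}$ be Cartier on a smooth total space, which is precisely why the paper's rigorous statement (Corollary~\ref{qie}) lives on the resolution $\widetilde{\CX}$ with $\widetilde{\CR}_{0}$; the paper's Zariski-closure mechanism is a way of getting the same conclusion while needing only generic \'etaleness rather than a global divisor pullback inequality.
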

Here $\CD_{\rm red}$ is the reduced induced closed subscheme of $\CX$ with underlying set $\CD$. This proposition will be proved (in a slightly more general setting) in \S\ref{5.5} (see Corollary \ref{qie}). \medskip\medskip

\noindent\textbf{Counting argument}\medskip

\noindent Now we assume Proposition \ref{qie0}. By Hurwitz's formula, $K_{X}\sim f^{*}K_{(A\times_{K}T_{K})}+R$. Here $K_{X}$ is a canonical divisor of $X$, and similarly for $K_{(A\times_{K}T_{K})}$. Since the restriction of $f^{*}K_{(A\times_{K}T_{K})}$ to any fibre $X_{t}\,(t\in T)$ is trivial, we get that the restrictions of $K_{X}$ and $R$ to any fibre $X_{t}$ are linearly equivalent. Since $X$ is of general type, $K_{X}$ is big on $X$, and thus $R$ is a big divisor on the fibre $X_{t}$ for general $t\in T$.

This positivity of $R$, combined with the fact that the height of $x_{n}$ tends to infinity as $n\to\infty$, allows us to ``intuitively'' imagine that $\CR$ intersects sections $x_{n}\colon B\to\CX$ at more and more points as $n\to\infty$. By Proposition \ref{qie0}, this implies that $\CD_{\rm red}$ is tangent to sections $(s_{n},t_{n})\colon B\to\CA\times_{\CC}T$ at more and more points as $n\to\infty$. Passing to the limit, we can imagine that the limit entire curve constructed in \cite{arXiv:2308.08117} is tangent to $\CD_{\rm red}$ at ``an excessive number of'' points. As the limit entire curve is contained in the fibre $\CA\times\{t_{\infty}\}$, it is tangent to $(\CD_{\rm red})_{t_{\infty}}$ at ``an excessive number of'' points.

Now if $(\CD_{\rm red})_{t_{\infty}}$ is reduced and not equal to the whole fibre $\CA\times\{t_{\infty}\}$, we get an entire curve on $\CA\times\{t_{\infty}\}$ which is tangent to the reduced divisor $(\CD_{t_{\infty}})_{\rm red}$ on $\CA\times\{t_{\infty}\}$ at ``an excessive number of'' points. Since $A$ has trivial trace, using \cite[Theorem 3.3]{arXiv:2308.08117}, we can construct this limit entire curve so that it is not contained in $\CD_{b,t_{\infty}}$. Moreover, we can choose $b\in B(\CC)$ such that the fibre of the morphism $(\CD_{t_{\infty}})_{\rm red}\to B$ at $b$ is reduced. In this case we can deduce that the limit entire curve is tangent to the reduced divisor $(\CD_{b,t_{\infty}})_{\rm red}$ on $\CA_{b}\times\{t_{\infty}\}$ at ``an excessive number of'' points. However, it is strange that a linear entire curve in a complex abelian variety is tangent at ``an excessive number of'' points to a reduced divisor which does not contain it. This leads to a contradiction.

In general, we have to handle the case when $(\CD_{\rm red})_{t_{\infty}}$ is non-reduced or it is the whole fibre $\CA\times\{t_{\infty}\}$. This leads to the other two technical difficulties and will be treated in the next subsection.
\medskip\medskip

\noindent\textbf{Rigorous realization via Nevanlinna theory}\medskip

\noindent The above is the rough idea, but it is far from rigorous. For example, the meaning of ``an excessive number of'' points is not clear. We will use Nevanlinna theory (see \S\ref{5.1}) to make the proof rigorous.

Specifically, we will employ the counting function in Nevanlinna theory to quantify the number of intersections between $x_{n}(B)$ and $\CR$, as well as the number of tangent points between $(s_{n},t_{n})(B)$ and $\CD_{\rm red}$. Eventually, we will show two things: on the one hand, the positivity of $R$ implies that the tangent points between $(s_{n},t_{n})(B)$ and $\CD_{\rm red}$ account for \textbf{a strictly positive proportion} of all intersection points (in the asymptotic sense; see Lemma \ref{xiajie-}); on the other hand, by Yamanoi's Second Main Theorem (see Theorem \ref{Nevanlinna}(1)) and some limit arguments (Lemma \ref{key1} below and Lemma \ref{limits}), we will prove that this proportion of tangent points \textbf{must be zero} (see Lemma \ref{subtle}), thereby derive a contradiction.

Note that $\{(s_{n},t_{n})\}_{n\geq1}$ are merely sections, not entire curves. The limit entire curve is obtained by re-parametrizing each $(s_{n},t_{n})$ by open discs of increasing radii and then passing to limit (see Theorem \ref{limit}). Therefore, unlike the classical Nevanlinna theory, we have to consider the counting functions (as well as proximity and characteristic functions) for holomorphic maps from an open disc to a complex variety, rather than only from the whole complex plane. Furthermore, we need to vary the holomorphic maps and study the limit behaviour of these three functions in Nevanlinna theory (see Lemma \ref{limits}).

\begin{remark}
In practice, it is more convenient to consider asymptotic behaviour rather than limiting behaviour. Specifically, in the two opposite estimates above, one is easier to handle for each individual section $(s_{n},t_{n})$ (the asymptotic behaviour), while the other fits naturally with the limit entire curve (the limiting behaviour). Hence, in order to get a contradiction, we have two possible routes: either pass from the asymptotic behaviour to the limiting behaviour, or conversely.

Classically, it is more natural to transform both estimates into bounds for the limit entire curve, since more tools are available for entire curves. In our setting, however, the second route---from the limit to the asymptotic---proves to be easier. The essential reason is that the limit entire curve exists only on $\CA\times_{\CC}T$ (it can also be lifted to $\CX$, but its behaviour becomes bad when the fibre $\CX_{t_{\infty}}$ lies in the singular locus of $\CX$), whereas the tangency comes from intersections of $x_{n}$ and $\CR$, which live on $\CX$. Hence we need to use the limit entire curve as a bridge to obtain information about the asymptotic behaviour.
\end{remark}

\subsection{Three technical difficulties}\label{jishu}
As we have mentioned in the last subsection, there are three difficulties in realizing the counting argument above. We now explain each in turn.\medskip \medskip

\noindent\textbf{Non-reducedness of $(\CD_{\rm red})_{t_{\infty}}$}\medskip

\noindent Firstly, $(\CD_{\rm red})_{t_{\infty}}$ might be non-reduced. In this case the tangency of $(\CD_{\rm red})_{t_{\infty}}$ and the entire curve in $\CA\times\{t_{\infty}\}$ is trivial. In order to overcome this problem, we prove the following (slightly surprising) lemma in \S\ref{5.2}.

\begin{lemma}[Lemma \ref{key}]\label{key1}
Let $S$ be a variety over $\CC$. Let $\pi\colon X\to S$ be a morphism of finite type between separated $\CC$-schemes. Fix a point $s_{\infty}\in S(\CC)$ and a sequence of points $s_{n}\in S(\CC)$ tending to $s_{\infty}$ in the Euclidean topology. Then there is a Zariski open dense subset $X_{s_{\infty}}^{\circ}\subset X_{s_{\infty}}$ such that for every point $x_{\infty}\in X_{s_{\infty}}^{\circ}(\CC)$ and every sequence of points $x_{n}\in X_{s_{n}}$ tending to $x_{\infty}$, if there exists a sequence of tangent vectors $v_{n}\in T_{x_{n}}(X_{s_{n}, {\rm red}})$ tending to a tangent vector $v_{\infty}\in T_{x_{\infty}}(X_{s_{\infty}})$ under the Euclidean topology of the tangent bundle $TX$, then we have $v_{\infty}\in T_{x_{\infty}}(X_{s_{\infty}, {\rm red}})$.
\end{lemma}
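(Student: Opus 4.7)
The plan is to argue locally in analytic coordinates at a generic smooth point of $X_{s_{\infty},{\rm red}}$, put $\pi$ into a normal form there, and then track the asymptotic behavior of tangent vectors directly.

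First I would make standard reductions: replace $X$ and $S$ by their reductions (which changes neither the reduced fibers nor their tangent spaces), replace $S$ by the Zariski closure of $\pi(X)$ so that $\pi$ becomes dominant, and work on each irreducible component of $X_{s_{\infty},{\rm red}}$ separately. Let $Z$ be such a component, of dimension $d$ and codimension $r$ in $X$, and pick $x_{\infty}$ in the intersection of the smooth loci of $X$ and of $Z$. I choose local analytic coordinates $(u_{1},\dots,u_{r},v_{1},\dots,v_{d})$ on $X$ with $Z=\{u=0\}$ locally, and local coordinates $(s_{1},\dots,s_{M})$ on $S$ centered at $s_{\infty}$. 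Each pullback $\pi_{j}:=\pi^{\ast}(s_{j})$ lies in the ideal $(u_{1},\dots,u_{r})$ and decomposes as $\pi_{j}=\sum_{i}a_{ji}(u,v)u_{i}$ for analytic functions $a_{ji}$.

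Let $\rho$ be the generic rank on $Z$ of the $M\times r$ matrix $A_{0}(v):=(a_{ji}(0,v))$. If $\rho=r$, then Nakayama's lemma applied to $I=(u_{1},\dots,u_{r})$ and $J=(\pi_{1},\dots,\pi_{M})$ gives $J=I$ (since $J+\mathfrak{m}I=I$ by the rank hypothesis), so $X_{s_{\infty}}$ agrees with $Z$ as schemes locally at $x_{\infty}$ and is thus reduced, making the conclusion immediate. So I focus on the case $\rho<r$: after a linear change of the $u$'s and $s$'s I arrange $\pi_{j}=u_{j}+O(\mathfrak{m}^{2})$ for $j\leq\rho$ and $\pi_{j}\in(u)\cdot\mathfrak{m}$ for $j>\rho$. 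The implicit function theorem then eliminates $u_{1},\dots,u_{\rho}$ via the first $\rho$ equations $\pi_{j}=s_{n,j}$; substitution into the remaining $M-\rho$ equations yields a system of analytic equations in $u_{\rho+1},\dots,u_{r}$, with the $v$'s and $s_{n,j}$'s as parameters, whose generic structure is controlled via iterated Weierstrass preparation by the leading-order terms of the $\pi_{j}$ for $j>\rho$.

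The heart of the argument is the observation that convergence of tangent vectors $v_{n}\to v_{\infty}$ in $T(X_{s_{n},{\rm red}})$ over the \emph{full} sequence (not merely a subsequence) is extremely restrictive: it forces the rates at which the components of $x_{n}$ approach $x_{\infty}$ to match the rates at which $s_{n}\to s_{\infty}$ in a specific way dictated by the Weierstrass-prepared equations. A non-zero $\partial_{u}$-component in $v_{\infty}$ can arise only when certain fractional-power ratios $s_{n,j}/s_{n,k}^{\alpha_{j}/\alpha_{k}}$ converge over the full sequence to specific values, and these values, interpreted via the normal form, impose algebraic conditions on the generic coordinate $v_{0}$ of $x_{\infty}$. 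Such conditions cut out a proper Zariski closed subset of $Z$, whose complement serves as the desired $X_{s_{\infty}}^{\circ}$.

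The main obstacle I anticipate is controlling the combinatorial complexity of the Weierstrass preparation when several higher-order generators $\pi_{j}$ $(j>\rho)$ interact non-trivially, since the fractional-power exponents $\alpha_{j}/\alpha_{k}$ can then depend in a subtle way on the multiplicity structure of $X_{s_{\infty}}$ along $Z$. I expect this can be addressed by an inductive projection argument reducing to the case $M-\rho=1$, in which a single Weierstrass polynomial in one $u$-coordinate governs the asymptotics and the resonance condition defining $Z\setminus X_{s_{\infty}}^{\circ}$ takes an explicitly algebraic form.
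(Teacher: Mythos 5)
The proposal takes a genuinely different route from the paper's proof, but it has real gaps and is explicitly incomplete by your own acknowledgement.

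On the comparison: where you try to argue directly in local analytic coordinates at a smooth point and control the asymptotics via Weierstrass preparation and a ``resonance'' analysis of the rates at which $x_{n}\to x_{\infty}$ and $s_{n}\to s_{\infty}$, the paper makes three global algebraic reductions. It first applies the Raynaud--Gruson flattening theorem to the subscheme $Y=({\rm d}\pi)^{-1}(0_{S}(S))\subset TX$ (the fibrewise tangent variety) to reduce to $\dim S=1$; it then reduces to $X,S$ normal; finally it uses the fact (\cite[0BRQ]{Sta}) that after a finite surjective base change $T\to S$, the normalization of $X\times_{S}T$ has generically reduced fibres over $T$, so on a dense open set $T_{y}(Y_{t,{\rm red}})=T_{y}(Y_{t})$ and the statement is vacuously true there, and the conclusion is transported back via compactness of the projectivized tangent bundle. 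This sidesteps any asymptotic analysis entirely.

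Two concrete gaps in your sketch. First, you choose $x_{\infty}$ ``in the intersection of the smooth loci of $X$ and of $Z$,'' but this intersection can be empty. Take $X=V(x^{2}-s^{2}y)\subset\BA^{3}$ with $\pi(x,y,s)=s$. The singular locus of $X$ is exactly $\{x=s=0\}$, which coincides with $Z=X_{0,{\rm red}}$, so no point of $Z$ is a smooth point of $X$ and your coordinate frame $(u_{1},\dots,u_{r},v_{1},\dots,v_{d})$ does not exist. (The lemma is nevertheless true for this example: the fibre equation $2x_{n}^{(1)}a_{n}=s_{n}^{2}b_{n}$ together with $x_{n}^{(1)}=\pm s_{n}\sqrt{y_{n}}$ gives $\pm 2\sqrt{y_{n}}\,a_{n}=s_{n}b_{n}\to 0$, hence $a_{n}\to 0$ whenever $y_{n}\to y_{0}\neq 0$; so $X^{\circ}_{s_{\infty}}=Z\setminus\{0\}$ works.) To repair this you would have to pass to a normalization or resolution of $X$ first, which is exactly what Step 2 of the paper's argument does, and then carefully transport tangent vectors back through the finite birational modification. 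Second, the ``heart of the argument''---that resonance of fractional-power ratios across the full sequence cuts out a proper Zariski closed subset of $Z$---is asserted rather than proved, and you explicitly flag the multi-equation Weierstrass case ($M-\rho>1$) as unresolved, with only a hope that an ``inductive projection argument'' would reduce to $M-\rho=1$. The exponents $\alpha_{j}$ appearing in the prepared polynomials depend on the multiplicity structure of $X_{s_{\infty}}$ along $Z$, which must be shown to be generically constant on $Z$ and to produce a single proper closed bad locus independent of which resonance regime the fixed sequence $\{s_{n}\}$ falls into. As written this is an outline of an attack, not a proof.
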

Roughly speaking, the lemma says that although the fibre $X_{s_{\infty}}$ may be non-reduced, the limit of the tangent bundles $T(X_{s_{n},{\rm red}})$ is ``almost'' $T(X_{s_{\infty},{\rm red}})$. Now we take $S=T$ and $X=\CD_{\rm red}$, and take $\pi$ to be the composition $\CD_{\rm red}\hookrightarrow\CA\times_{\CC}T\to T$. Since $\CD_{\rm red}$ is reduced and $\{t_{n}\}_{n\geq1}$ is Zariski dense in $T$, after taking a subsequence, we may assume that $(\CD_{\rm red})_{t_{n}}$ is reduced for all $n\geq1$. Take $\{t_{n}\}_{n\geq1}$ to be the sequence $\{s_{n}\}_{n\geq1}$ in the lemma. Then the lemma implies that the limit of the tangent bundles $T(\CD_{\rm red})_{t_{n}}=T(\CD_{t_{n},{\rm red}})$ is ``almost'' $T(\CD_{t_{\infty},{\rm red}})$.

In our case, since the section $(s_{n},t_{n})\colon B\to\CA\times_{\CC}T$ is contained in $\CA\times\{t_{n}\}$ for all $n$, we have $T[(s_{n},t_{n})(B)]\cap T\CD_{\rm red}\subseteq T(\CD_{\rm red})_{t_{n}}$. Therefore we can expect that the limit entire curve is indeed tangent to $(\CD_{t_{\infty}})_{\rm red}$ at ``an excessive number of'' points. This resolves the first difficulty.\medskip\medskip

\noindent\textbf{Non-flatness of $\CD$ over $T$}\medskip

\noindent The second difficulty is that $\CD$ may contain the whole fibre $\CA\times\{t_{\infty}\}$. In this case the tangency of $(\CD_{\rm red})_{t_{\infty}}$ and the entire curve in $\CA\times\{t_{\infty}\}$ is also trivial. To overcome this problem, we use a weak semistable reduction theorem (cf. \cite[Theorem 0.3]{zbMATH01443405}). Namely, there is a generically finite proper surjective map $T'\to T$ and a proper birational equidimensional morphism $\CX'\to\CX\times_{T}T'$ such that the fibre of $\CX'\to T'$ are all reduced. This reducedness essentially implies that the ramification divisor of $X'\to A\times_{K}T'_{K}$ is ``horizontal'' over $T'_{K}$, where $X'$ is the generic fibre of the composition $\CX'\to\CX\times_{T}T'\to\CX\to B$. This resolves the second difficulty. 

We emphasize that the essential reason why this operation transforms the problem is that $t_{n}\in T(\CC)$ is constant for each $n\geq1$. Therefore the section $(s_{n},t_{n})\colon B\to\CA\times_{\CC}T$ can be lifted to a section $(s_{n},t'_{n})\colon B\to\CA\times_{\CC}T'$. This is not always possible when $t_{n}\in T_{K}(K)$ is non-constant.
\medskip\medskip

\noindent\textbf{Problem caused by singularities}\medskip

\noindent However, the careful reader may notice that the operation in the previous paragraph may change the smoothness of $X$, this leads to the third difficulty. In general, we cannot assume that $X$ is smooth. When $X$ is normal but not smooth, the ramification divisor $R$ of $f\colon X\to A\times_{K}T_{K}$ may not be $\QQ$-Cartier, this makes an issue to define the intersection number of $\CR$ and $x_{n}(B)$. If we take $\widetilde{X}$  to be a resolution of singularity of $X$, we can talk about the intersection number. But in this case the base locus of  $K_{\widetilde{X}}$ may contain the fibre $\widetilde{X}_{t_{\infty}}$, which leads to difficulty in estimating the uniform bound of the intersection numbers $\widetilde{\CR}\cap x_{n}(B)$.

To overcome this problem, we will use the \emph{canonical model} $X_{\rm can}$ of $X$ in the minimal model program, which is a birational model of $X$ with ample $\QQ$-Cartier canonical class. Moreover, it has mild singularities. By the Hacon--Mckernan extension theorem (cf. \cite[Corollary 1.7]{zbMATH05166600}), we have a birational morphism $X_{\rm can}\to X$ whenever $X$ does not contain rational curves. We can use this canonical model to avoid the case that $R$ is not $\QQ$-Cartier or $K_{X}$ has base locus. See \S\ref{5.3} for more details.

\subsection{Notations and terminology}\label{1.3}
For any abelian group $M$ and any ring $R$ containing $\ZZ$, denote $M_{R}=M\otimes_{\ZZ}R$. This apply particularly to $R=\QQ, \RR, \CC$.

By a \emph{variety}, we mean an integral scheme, separated of finite type over the base field. A curve is a 1-dimensional variety.

We use $\sim$ to denote the linear equivalent relation between $\QQ$-Weil or $\QQ$-Cartier divisors.

For a closed subset $Z$ of a scheme $X$, denote by $Z_{\rm red}$ the reduced induced scheme structure on $Z$.

For a normal variety $X$ over a field $k$, define the \emph{canonical sheaf} $\omega_{X}\colonequals i_{*}\omega_{U}$, where $U$ is the smooth locus of $X$ and $i\colon U\hookrightarrow X$ is the natural open immersion. This is a reflexive sheaf of rank 1, so it corresponds to a Weil divisor class $K_{X}$ of $X$. We call $K_{X}$ a \emph{canonical divisor} of $X$.

For a generically finite surjective morphism $f\colon X\to Y$ between normal schemes, define the \emph{ramification divisor} of $f$ as
$$
R(f)\colonequals \sum\limits_{D\subseteq X}{\rm length}_{k(D)}(\Omega_{X/Y})_{D}[D].
$$
where the summation is over all prime Weil divisors of $X$ and $\Omega_{X/Y}$ denotes the sheaf of relative differentials. Moreover, if $X,Y$ are normal varieties over a field $k$ of characteristic 0 with $K_{X}, K_{Y}$ being $\QQ$-Cartier, then $f$ is separable and we have the Hurwitz's formula $K_{X}\sim f^{*}K_{Y}+R(f)$.

By a \emph{function field of one variable} over a field $k$, we mean a finitely generated field $K$ over $k$ of transcendence degree 1 such that $k$ is algebraically closed in $K$. We usually denote by $B$ a smooth projective curve over $k$ with function field $K$. For a projective variety $X$ over $K$, an \emph{integral model} of $X$ over $B$ is a projective variety $\CX$ over $k$ together with a projective flat morphism $\CX\to B$ whose generic fibre is isomorphic to $X$.

For a sequence (of any kind of object) we always mean an infinite sequence. For a subsequence of a given sequence, we also mean an infinite subsequence.

Given a function field $K$ of one variable over a field $k$, a projective variety $X$ over $K$ and a sequence of rational points $\{x_{n}\}_{n\geq1}\subseteq X(K)$, we say that $\{x_{n}\}_{n\geq1}$ have bounded (resp. unbounded) height if for some ample line bundle $L$ on $X$, the Weil height $h_{L}$ associated to $L$ is bounded (resp. unbounded) on $\{x_{n}\}_{n\geq1}$. Note that this definition is independent of the choice of $L$ and $h_{L}$.

All complex analytic varieties are assumed to be reduced and irreducible. For a complex analytic variety $X$ with a point $x\in X$, denote by $T_{x}X$ the complex analytic tangent space of $X$ at $x$ defined by holomorphic derivations. For a holomorphic map $f\colon X\to Y$ with $f(x)=y$, denote by ${\rm d}f\colon T_{x}X\to T_{y}Y$ the induced map between the tangent spaces.

For a variety $X$ over $\CC$, denote $X(\CC)$ its associated complex analytic space (endowed with Euclidean topology). For a point $x\in X(\CC)$, denote $T_{x}X$ for the tangent space $T_{x}(X(\CC))$ for simplicity. Denote by $TX\colonequals {\rm Spec}({\rm Sym}^{*}(\Omega_{X}))$ the (geometric) tangent bundle of $X$. The complex point of $TX$ is represented by a pair $(x,v)$ with $x\in X(\CC)$ and $v\in T_{x}X$ a tangent vector of $X$ at $x$. For a morphism $f\colon X\to Y$ between complex varieties, there is a natural tangent map ${\rm d}f\colon TX\to TY$ induced by $f$, which is algebraic. For $x\in X$, the restriction of ${\rm d}f$ to $T_{x}X\subseteq TX$ coincides with the tangent map mentioned above.

For any real number $r>0$, denote by $\BD_{r}\subseteq\CC$ the open disc of radius $r$ centred at $0$. Let $(Y, d)$ be a metric space. Let $\{r_{n}\}_{n\geq1}$ be a sequence of positive real numbers tending to infinity. Let $\phi_{n}\colon \BD_{r_{n}}\to Y$ be a sequence of continuous maps. We say that $\{\phi_{n}\}_{n\geq1}$ converges to a map $\phi\colon \CC\to Y$ uniformly on any compact subset of $\CC$ if for every $\varepsilon>0$ and every compact subset $\Omega\subseteq\CC$, there is $N\geq1$ such that for every $n\geq N$ and $z\in \Omega$, $d(\phi(z), \phi_{n}(z))\leq\varepsilon$. If such $\phi$ exists, it is unique and continuous. Moreover, if $Y$ is further a complex analytic variety and $\phi_{n}$ are holomorphic, then $\phi$ is holomorphic.

\medskip

\noindent\textbf{Acknowledgements.} I would like to express my sincere thanks to my advisors Junyi Xie and Xinyi Yuan for their valuable insights shared during discussions and for their consistent encouragement throughout the research. I am grateful to She Yang for many useful discussions in Lemma \ref{key}, and to Jihao Liu for providing the reference \cite{zbMATH01443405}. I am appreciate to Ariyan Javanpeykar for many useful communications on the first version of this article. I would also like to thank Ya Deng, Ziyang Gao, Carlo Gasbarri, Katsutoshi Yamanoi and Umberto Zannier for numerous helpful comments on this work.

This work is supported by the National Natural Science Foundation of China Grants No. 12271007, No. 12250004 and No. 12321001.
\section{Locations of limit points}
In this section we recall some results from \cite{arXiv:2308.08117}, which will be used in the next section. \S\ref{lec} is some basic on the linear entire curves (cf. \cite[\S 2]{arXiv:2308.08117}) and \S\ref{limit} explicitly describes the location of limit points of a given sequence of sections of an abelian scheme, which is related to the linear entire curves (cf. \cite[\S3]{arXiv:2308.08117}).
\subsection{Linear entire curves}\label{lec}
Let $A$ be a complex abelian variety. Denote by ${\rm Lie}(A)$ the group of translation-invariant holomorphic derivations on $\CO_{A}$. Let $x\in A$ be a point, and $v\in{\rm Lie}(A)$ be a nonzero vector in the Lie algebra. By restriction, we have a canonical isomorphism ${\rm Lie}(A)\simeq T_{x}A$. By definition, $v$ defines a translation-invariant vector field on $\CA$, and thus it is a foliation $\CF(v)$ on $A$. The leaf $\CF(v)_{x}=\CF(x, v)$ of this foliation through $x$ is called the \emph{linear entire curve through $x$ in the direction $v$}. By translation, we have
$$
\CF(x, v)=\CF(0, v)+x.
$$

Alternatively, write $A={\rm Lie}(A)/ H_{1}(A, \ZZ)$. Denote by $\tilde{x}$ be a lifting of $x$ in ${\rm Lie}(A)$. In the complex vector space ${\rm Lie}(A)$, there is a linear subset given by
$$
\widetilde{\phi}_{(\tilde{x}, v)}\colon \CC\ \lra\ {\rm Lie}(A),\qquad z\mapsto\tilde{x}+zv.
$$
Composing with ${\rm Lie}(A)\to A$, we obtain an entire curve
$$
\phi_{(x, v)}\colon\CC\ \lra\ A.
$$
The entire curve is independent of the choice of $\tilde{x}$, and its image is exactly the linear leaf $\CF(x, v)$. We call $\phi_{(x, v)}\colon\CC\to A$ the \emph{linear entire curve through $x$ in the direction $v$}.

\medskip\medskip

\noindent\textbf{Zariski closures of linear entire curves}\medskip

\noindent Linear entire curves are generally not algebraic. We recall a clear description in \cite{arXiv:2308.08117} of Zariski closures of linear leaves in fibres of abelian schemes.

Let $B$ be a smooth quasi-projective cure over $\CC$, and let $\pi\colon\CA\to B$ be an abelian scheme.  Let $K=\CC(B)$ be the function field of $B$, and $A$ be the generic fibre of $\pi\colon\CA\to B$. By the Lang--N\'{e}ron theorem (cf. \cite[Theorem 2.1]{zbMATH05233949}),
$$
V(A, K)=(A(K)/A^{(K/\CC)}(\CC))_{\RR}
$$
is a finite dimensional $\RR$-vector space. For an abelian subvariety $H$ of $A$, there is a natural inclusion
$$
V(H, K)=(H(K)/H^{(K/\CC)}(\CC))_{\RR}=(H(K)/H(K)\cap A^{(K/\CC)}(\CC))_{\RR}\subseteq V(A, K).
$$
Moreover, Xie--Yuan proved in \cite[Lemma 2.4]{arXiv:2308.08117} that for every $s\in V(A, K)$, there exists a \emph{unique minimal abelian subvariety} $G(s)$ of $A$ with $s\in V(G(s), K)$.

The following is the main proposition cited in this subsection.
\begin{proposition}[{\cite[Proposition 2.6]{arXiv:2308.08117}}]\label{closure}
Let $B$ be a smooth quasi-projective curve over $\CC$, and let $\pi\colon\CA\to B$ be an abelian scheme. Let $K=\CC(B)$ be the function field of $B$, and $A$ be the generic fibre of $\pi\colon\CA\to B$. Assume that the natural map ${\rm End}_{K}(A)\to\rm{End}_{\ol{K}}(A_{\ol{K}})$ is an isomorphism. Let $s$ be a nonzero element of $V(A, K)$. Denote by $\CG$ the Zariski closure of $G(s)$ in $\CA$. Then there is a subset $S(A)$ of $B(\CC)$ of measure $0$ such that for any $b\in B(\CC)\backslash S(A)$, the Zariski closure of the linear leaf $\CF(0, \delta(s, v_{b}))$ in $\CA_{b}$ equals to $\CG_{b}$. Here $v_{b}\in T_{b}B$ is a nonzero vector.
\end{proposition}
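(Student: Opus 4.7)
The plan is to establish the inclusion $\CF(0, \delta(s, v_b)) \subseteq \CG_b$ unconditionally, and the reverse inclusion by a minimality/countability argument. For the first inclusion, since $s \in V(G(s), K)$, after multiplying $s$ by a positive integer and subtracting a trace element we may assume it is represented by a genuine section $\sigma \colon B \to \CG \subseteq \CA$. The datum $\delta(s, v_b) \in \mathrm{Lie}(\CA_b)$ is then the vertical component at $b$ of $d\sigma(v_b)$, which visibly lies in $\mathrm{Lie}(\CG_b)$. Consequently the linear leaf sits inside $\CG_b$, and its Zariski closure $H_b$ is an abelian subvariety of $\CG_b$.

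For the reverse inclusion I would argue by contradiction. Suppose the set $E := \{b \in B(\CC) : H_b \subsetneq \CG_b\}$ has positive Lebesgue measure. The hypothesis $\mathrm{End}_K(A) \simeq \mathrm{End}_{\ol{K}}(A_{\ol{K}})$, together with Poincar\'e complete reducibility, implies that every abelian subvariety of $A_{\ol{K}}$ descends, up to isogeny, to a $K$-subvariety of $A$; so the collection of abelian $K$-subvarieties of $G(s)$ up to isogeny is at most countable, and each extends, after shrinking $B$, to an abelian subscheme of $\CG$. Partitioning $E$ according to which such extension realizes $H_b$, countable additivity yields a fixed abelian subscheme $\CH \subsetneq \CG$ and a positive-measure subset $E' \subseteq E$ with $H_b = \CH_b$ for every $b \in E'$.

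The inclusion $H_b \subseteq \CH_b$ translates into $\delta(s, v_b) \in \mathrm{Lie}(\CH_b)$ for $b \in E'$. Composing $\sigma$ with the quotient $q \colon \CG \to \CG/\CH$ produces a section $\bar{\sigma} \colon B \to \CG/\CH$ whose vertical derivative at $b$ along $v_b$ is the image of $\delta(s, v_b)$ under $dq$, and therefore vanishes on $E'$. Since $b \mapsto \delta(\bar{\sigma}, v_b)$ is a holomorphic section of the pullback of the relative tangent bundle $T_{(\CG/\CH)/B}$ over the connected Riemann surface $B$, vanishing on a set of positive measure forces it to vanish identically. By Manin's theorem of the kernel (or directly via the Lang--N\'eron analysis underlying Lemma \ref{minimal}), identical vanishing of the vertical derivative of a section of an abelian scheme forces the section to come from the $K/\CC$-trace. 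Thus the image $\bar{s} \in V(G(s)/H, K)$ of $s$ is zero, i.e.\ $s \in V(H, K)$, which contradicts the minimality of $G(s)$. Setting $S(A)$ to be the complement of the (full-measure) locus where $H_b = \CG_b$ completes the proof.

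The main obstacle I anticipate is the last implication: that identical vanishing of the vertical derivative of $\bar{\sigma}$ forces $\bar{\sigma}$ to be a trace section. This is where one must genuinely invoke Manin's theorem of the kernel (or an equivalent Gauss--Manin / Lang--N\'eron argument) rather than reason purely formally. A secondary subtlety is the descent of $\ol{K}$-abelian subvarieties to $K$-subvarieties via the endomorphism hypothesis, which is essential to the countability step. The remaining pieces---linear-leaf formalism, the holomorphic nature of $\delta(s, v_b)$ in $b$, and the identity principle on a connected Riemann surface---are standard once these two ingredients are in place.
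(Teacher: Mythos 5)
The paper does not actually prove this proposition; it is cited verbatim from \cite[Proposition 2.6]{XY2}, so there is no in-paper argument to compare against. Evaluating your proposal on its own terms, the overall architecture is sound, but there is one genuine gap, and one step can be short-circuited.

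The gap is in the countability/partition step. You partition the positive-measure set $E$ according to ``which abelian subscheme $\CH \subsetneq \CG$ realizes $H_b$,'' but this silently assumes that the Zariski closure $H_b$ of the linear leaf in $\CA_b$ is the fibre $\CH_b$ of a \emph{global} abelian subscheme of $\CG$. A priori, $H_b$ is merely some abelian subvariety of the single fibre $\CA_b$, and special fibres of an abelian scheme can have abelian subvarieties not coming from the generic fibre. What is needed, and what the hypothesis $\End_K(A)\cong\End_{\ol K}(A_{\ol K})$ exists to provide, is the statement that for $b$ outside a measure-zero subset of $B(\CC)$, every abelian subvariety of $\CA_b$ arises as the fibre of an abelian subscheme defined over (a Zariski-dense open of) $B$. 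Since $\dim B=1$, the set of $b$ with extra Hodge classes (hence extra endomorphisms, hence extra abelian subvarieties not seen by the generic fibre) is a countable union of proper closed subsets, i.e.\ countable, i.e.\ measure zero; but this is a Hodge-theoretic/monodromy input that must be invoked explicitly, not merely a Poincar\'e-reducibility count. Without it, you cannot reduce the positive-measure exceptional set to a single $\CH$, and the quotient argument never starts. (You flag ``descent of $\ol K$-subvarieties to $K$'' as a subtlety, but that is a different, milder point; the real issue is the passage from a fibral subvariety $H_b$ to a global $\CH$.)

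A secondary remark: your final chain — analyticity forces identical vanishing of $\delta(\bar s,\cdot)$, then Manin's theorem of the kernel forces $\bar s$ to be a trace class — is exactly the content of Theorem~\ref{non-degeneracy} (\cite[Theorem~2.2]{XY2}), which the paper already quotes. You should invoke it directly for each of the countably many proper $\CH\subsetneq\CG$ (the minimality of $G(s)$ plus Lemma~\ref{minimal} giving $\bar s\neq 0$ in $V(G(s)/H,K)$), rather than re-deriving a weaker version of it through a positive-measure-implies-identically-zero argument, which incidentally requires the real-analyticity of $b\mapsto\delta(s,v_b)$ (the Betti map is real-analytic, not holomorphic, so ``holomorphic section'' in your write-up is not quite right). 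One more small imprecision: $s\in V(A,K)$ is an $\RR$-vector, so it is not ``a genuine section after multiplying by an integer''; you should write $s$ as a real linear combination of classes of sections in $G(s)(K)$ and use $\RR$-linearity of $\delta$, which is harmless but worth saying correctly.
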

Here the transfer map (associated to $s$) $\delta(s,\cdot)\colon T_{b}B\to{\rm Lie}(\CA_{b})$ is defined in \cite[\S 2.2]{arXiv:2308.08117} using the Betti map. A key property of this transfer map is the following non-degeneracy theorem.
\begin{theorem}[{\cite[Theorem 2.2]{arXiv:2308.08117}}]\label{non-degeneracy}
$(B, \pi\colon\CA\to B, K, A)$ are as above. Let $s\in V(A, K)$ be a nonzero element. Let $\Sigma(s)$ be the set of $b\in B(\CC)$ such that the transfer map
$$
\delta(s, \cdot)\colon T_{b}B\ \lra\ {\rm{Lie}}(\CA_{b})
$$
is zero. Then $\Sigma(s)$ has measure $0$ in $B$ with respect to any K\"{a}hler form on $B$.
\end{theorem}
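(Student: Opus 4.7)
\noindent\textbf{Proof proposal for Theorem \ref{non-degeneracy}.}

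My plan is to show that $\Sigma(s)$ is a proper real-analytic subset of $B(\CC)$, whence it automatically has measure zero with respect to any K\"{a}hler form. The argument will proceed in three stages: a local description of $\delta(s,\cdot)$ via the Betti trivialization, a real-analytic dichotomy, and a rigidity argument to exclude the degenerate case.

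\emph{Local description.} Write $g=\dim A$. First I would pick a simply connected open $U\subset B(\CC)$ and trivialize the local system $b\mapsto H_{1}(\CA_{b},\ZZ)$ on $U$ by a locally constant $\ZZ$-basis $\gamma_{1},\dots,\gamma_{2g}$. This yields a real-analytic diffeomorphism $\CA|_{U}\cong U\times(\RR^{2g}/\ZZ^{2g})$ in which the complex structure on each fibre $\RR^{2g}\cong{\rm Lie}(\CA_{b})$ varies real-analytically with $b$. An algebraic section $s\colon B\to\CA$ becomes in these coordinates a real-analytic map $\vec{a}_{s}\colon U\to\RR^{2g}/\ZZ^{2g}$ with a real-analytic local lift $\widetilde{\vec{a}}_{s}\colon U\to\RR^{2g}$. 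Unwinding the construction of the transfer map in \cite[\S 2.2]{XY2}, $\delta(s,v_{b})$ is, after the real-linear identification $\RR^{2g}\cong{\rm Lie}(\CA_{b})$ furnished by the Betti basis, nothing but the differential $\mathrm{d}\widetilde{\vec{a}}_{s}|_{b}\colon T_{b}B\to\RR^{2g}$.

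\emph{Dichotomy.} It follows that $\Sigma(s)\cap U$ is the common vanishing locus of the $2g$ real-analytic components of $\mathrm{d}\widetilde{\vec{a}}_{s}$ on $U$. Covering $B(\CC)$ by countably many such patches, the identity principle for real-analytic functions forces a dichotomy: either $\Sigma(s)$ has Lebesgue measure zero on every patch (hence globally), or $\Sigma(s)=B(\CC)$. It remains to exclude the latter case.

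\emph{Rigidity and the main obstacle.} If $\Sigma(s)=B(\CC)$, then $\widetilde{\vec{a}}_{s}$ is locally constant on every trivialization, so $s$ is a flat section of $\CA\to B$ viewed as a $C^{\infty}$ torus bundle; equivalently, its image is horizontal for the integral structure on ${\rm Lie}(\CA)/H_{1}(\CA,\ZZ)$. By a Lang--N\'{e}ron type rigidity, such an algebraic section must be torsion modulo the Chow trace. Concretely, after a suitable finite \'{e}tale cover of $B$ trivializing the monodromy along $s$, the section $s$ should lift to a constant section of the pullback of $\CA$ which then descends, using the characterization of $A^{(K/\CC)}$ as the maximal subgroup scheme whose geometric points are Betti-constant, to a torsion translate of a point in $A^{(K/\CC)}(\CC)$. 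This forces $s=0$ in $V(A,K)$, contradicting $s\neq 0$. The hard part of the proof is making this last rigidity step precise; the cleanest route likely goes through the characterization of $A^{(K/\CC)}$ via monodromy-invariant sub-Hodge structures of the polarizable variation $R^{1}\pi_{*}\QQ$, combined with Deligne's semisimplicity theorem for polarizable variations of Hodge structure.
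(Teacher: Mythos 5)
The statement you are proving is Theorem 2.3, which this paper cites from \cite[Theorem 2.2]{XY2} without reproducing a proof, so there is no in-paper proof to compare against; I will assess your proposal on its own terms.

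Your overall architecture --- write the transfer map locally as the differential of the Betti coordinate, conclude that $\Sigma(s)$ is a real-analytic subset of the connected surface $B(\CC)$, invoke the dichotomy ``measure zero or all of $B$'', and then rule out the degenerate case by a rigidity argument --- is the natural one, and the first two steps are sound. The dichotomy step is fine: $\delta(s,v_b)$ (for a local nonvanishing holomorphic vector field $v_b$) is real-analytic in $b$ because the Betti trivialization varies real-analytically with the period, the sections $s_i$ are algebraic, and the $\RR$-linear combination $\sum c_i\,\widetilde{\vec a}_{s_i}$ remains real-analytic even though each lift $\widetilde{\vec a}_{s_i}$ is only well-defined modulo $\ZZ^{2g}$; the identity principle then gives the global dichotomy on connected $B$.

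The genuine gap is in the rigidity step, and it is not merely that the details are hard but that the argument, as you have written it, is aimed at the wrong object. An element $s\in V(A,K)=(A(K)/A^{(K/\CC)}(\CC))\otimes_{\ZZ}\RR$ is an $\RR$-linear combination $s=\sum_i c_i[s_i]$ with $c_i\in\RR$; it is not an algebraic section of $\CA\to B$, so the clauses ``such an algebraic section must be torsion modulo the Chow trace'' and ``$s$ is a flat section of $\CA\to B$ viewed as a $C^{\infty}$ torus bundle'' do not literally apply. The condition $\Sigma(s)=B(\CC)$ says only that $\widetilde{\vec a}_s:=\sum_i c_i\widetilde{\vec a}_{s_i}$ is locally constant on the universal cover of $B$, giving a single vector $v\in H_1(\CA_{b_0},\RR)$. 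Tracking the monodromy, $v$ does not satisfy $(I-M_\gamma)v=0$; it satisfies $(I-M_\gamma)v=\sum_i c_i n_i^{\gamma}$ for integer vectors $n_i^{\gamma}$, i.e.\ $v$ is fixed only up to a (typically non-lattice) $\RR$-span of translations. So Deligne's fixed-part/semisimplicity theorem does not immediately place $v$ in the constant sub-Hodge structure, and hence does not immediately force $s$ into $V(A^{(K/\CC)},K)=0$. Moreover, the space $W=\{s\in V(A,K):\delta(s,\cdot)\equiv 0\}$ is an $\RR$-subspace but is not obviously defined over $\QQ$ with respect to the natural $\QQ$-structure on $V(A,K)$, so you cannot clear denominators and reduce to an actual section in $A(K)$. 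This matters: in the application (Corollary 2.6 / Corollary \ref{dim}), $s_\infty$ arises as a limit $\lim \ell_n^{-1}s_n$ and is genuinely a real, not rational, combination. Closing this gap requires either extending the monodromy/Hodge-theoretic argument to handle the ``quasi-fixed'' vector $v$ directly, or importing the quantitative non-degeneracy of the Betti map from the literature (e.g.\ the results of Andr\'e--Corvaja--Zannier or the Manin kernel/theorem of the kernel circle of ideas), correctly formulated for $\RR$-linear combinations. As it stands, your proposal reduces the theorem to a plausible but unproved rigidity statement that is essentially the theorem itself.
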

\subsection{Locations of limit points}\label{limit}
The following theorem gives a concrete construction and a precise description of linear entire curves obtained from sections of abelian schemes by a Brody-type convergence.

For a real number $r>0$, denote by $\BD_{r}\subseteq\CC$ the open disc of radius $r$ centred at $0$. Write $\BD=\BD_{1}$. Denote by $v_{\rm st}=\frac{\rm d}{{\rm d}z}$ the tangent vector of $\BD_{r}$ at 0 under the standard coordinate $z$.

\begin{theorem}[{\cite[Theorem 3.1(a)]{arXiv:2308.08117}}]\label{linear}
Let $B$ be a smooth quasi-projective curve over $\CC$, and let $\pi\colon\CA\to B$ be an abelian scheme. Let $K=\CC(B)$ be the function field of $B$, and $A$ be the generic fibre of $\pi\colon\CA\to B$. Let $\{s_{n}\}_{n\geq1}$ be a sequence in $\CA(B)$ and $\{b_{n}\}_{n\geq1}$ be a sequence in $B$ satisfying the following properties:

\begin{enumerate}[label=\normalfont(\arabic*),leftmargin=*]

\item $b_{n}$ converges to a point $b\in B$, and $s_{n}(b_{n})$ converges to a point $x\in \CA_{b}$.

\item There is a sequence $\ell_{n}$ of positive real numbers converging to infinity such that the image of $\ell_{n}^{-1}s_{n}$ in $V(A, K)$ converges to a nonzero element $s_{\infty}$ in $V(A, K)$.

\item The transfer map $\delta(s_{\infty}, \cdot)\colon T_{b}B\to {\rm{Lie}}(\CA_{b})$ is nonzero.

\end{enumerate}

\noindent Define a re-parametrization $\{\phi_{n}\colon \BD_{r_{n}}\to\CA\}_{n\geq1}$ of $\{s_{n}\colon B\to\CA\}_{n\geq1}$ as follows. Take an open unit disc $\BD$ in $B$ with centre $b$, and assume that $b_{n}\in\BD$ for all $n\geq1$.Let $z$ be the standard coordinate of $\BD$. Take the tangent vector $v_{b}\in T_{b}B$ to be the one represented by $v_{\rm{st}}=\frac{\rm{d}}{\rm{d}z}\in T_{0}\BD$. Define the re-parametrization map
$$
\phi_{n}\colon \BD_{r_{n}}\ \lra\ \CA, \quad\quad z\mapsto s_{n}(b_{n}+l_{n}^{-1}z)
$$
Here the sum $b_{n}+\ell_{n}^{-1}z$ is taken in $\BD$, and $\{r_{n}\}_{n\geq1}$ is a sequence of positive numbers satisfying
$$
{r_{n}}/{\ell_{n}}<1-|b_{n}|, \quad\quad r_{n}\ \lra\ \infty.
$$
Then $\{\phi_{n}\}_{n}$ converges $($uniformly on any compact subset of $\CC)$ to the linear entire curve $\phi_{(x, \delta(s_{\infty}, v_{b}))}$ of $\CA_{b}$ through $x$ in the direction of $\delta(s_{\infty}, v_{b})$ $($which can also be viewed as an entire curve on $\CA)$.
\end{theorem}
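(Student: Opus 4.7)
The plan is to pass to the Betti trivialization of $\CA$ over a small simply-connected disc around $b$ and then perform an explicit Taylor expansion of the rescaled sections.

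Fix an open disc $\BD \subseteq B$ centered at $b$ with standard coordinate $z$ (so that $v_b = v_{\rm st}$), small enough that all but finitely many $b_n$ lie in $\BD$ and such that $\CA|_{\BD} \to \BD$ admits a relative exponential $\exp \colon E \to \CA|_{\BD}$, where $E$ is the rank-$g$ holomorphic Lie-algebra bundle and $\ker(\exp)$ is a local system of full lattices. Since $\BD$ is contractible, this local system is constant, so a Betti marking yields a real-analytic trivialization $E \cong \Lie(\CA_b) \times \BD$. Each holomorphic section $s_n|_{\BD}$ lifts to a holomorphic map $\widehat{s}_n \colon \BD \to E$, unique up to a constant lattice translation; using hypothesis (1), we may normalize these lifts so that $\widehat{s}_n(b_n)$ converges in $\Lie(\CA_b)$ to a fixed lift $\widehat{x}$ of $x$. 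The key preliminary is to translate hypothesis (2) into this picture: by the construction of the transfer map $\delta(\cdot, v_b)$ in \cite[\S 2.2]{XY2}, convergence of $\ell_n^{-1} s_n$ to $s_\infty$ in $V(A, K)$ is equivalent to
$$
\ell_n^{-1} \cdot \frac{\partial \widehat{s}_n}{\partial z}(b_n) \longrightarrow \delta(s_\infty, v_b) \quad \text{in } \Lie(\CA_b),
$$
once one accounts for the quotient by the contribution from the trace $A^{(K/\CC)}(\CC)$, which matches the normalization intrinsic to the definition of $\delta$.

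With these preparations, the conclusion follows by Taylor expansion. For a fixed $R > 0$ and $n$ large enough that $\BD_R \subseteq \BD_{r_n}$, we may write
$$
\widehat{s}_n(b_n + \ell_n^{-1} z) = \widehat{s}_n(b_n) + \ell_n^{-1} z \cdot \frac{\partial \widehat{s}_n}{\partial z}(b_n) + O\bigl(\ell_n^{-2} |z|^2\bigr)
$$
uniformly in $|z| \leq R$, where the implicit constant is uniform in $n$ because the normalized lifts form a bounded family in the $C^2$ topology on a fixed smaller disc around $b$ (their values at $b_n \to b$ converge by the choice above, and their rescaled derivatives converge by the preliminary step). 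Passing to the limit yields $\widehat{x} + z \cdot \delta(s_\infty, v_b)$, and composing with $\exp$ gives precisely $\phi_{(x, \delta(s_\infty, v_b))}(z)$ on $\CA_b$. Since $r_n \to \infty$, this convergence is uniform on every compact subset of $\CC$, which is the conclusion of the theorem.

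The main obstacle is the identification of the transfer map with the Betti derivative above: articulating rigorously how convergence in the abstract finite-dimensional real vector space $V(A,K)$ (obtained by modding out the trace and tensoring with $\RR$) corresponds to convergence of actual Betti-coordinate derivatives at the chosen point $b$. This is where the definition of $\delta$ via the Betti form does all the work, and where the non-degeneracy content behind Theorem \ref{non-degeneracy} ensures that the tangential information at $b$ is not destroyed by the quotient. A secondary issue is the uniform $C^2$ bound on the lifted sections, which follows from a standard Montel-type argument on the fixed disc $\BD$ together with the fact that the sequence of holomorphic derivatives is bounded after the rescaling by $\ell_n^{-1}$.
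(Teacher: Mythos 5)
This theorem is cited verbatim from \cite[Theorem 3.1(a)]{XY2} and is not re-proved in the present paper, so there is no in-paper argument to compare against; I will assess your proposal on its own terms.

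Your overall route --- pass to the Betti trivialization, lift the sections to the Lie-algebra bundle $E$, translate hypothesis (2) into convergence of the rescaled derivative to $\delta(s_\infty, v_b)$, and Taylor-expand --- is the natural one and almost certainly mirrors the argument in [XY2], since $\delta$ is defined via the Betti map. But the justification you give for the uniform error bound in the Taylor expansion is wrong. You claim ``the normalized lifts form a bounded family in the $C^2$ topology on a fixed smaller disc around $b$.'' This is false: your own key preliminary says $\ell_n^{-1}\,\partial_z\widehat{s}_n(b_n) \to \delta(s_\infty, v_b)$, and hypothesis (3) guarantees $\delta(s_\infty, v_b) \neq 0$, so $|\partial_z\widehat{s}_n(b_n)| \sim \ell_n \to \infty$. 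The family $\{\widehat{s}_n\}$ is not even $C^1$-bounded, and the normalization you applied (a lattice translation fixing $\widehat{s}_n(b_n)$) does not change derivatives. Knowing that the values and the \emph{rescaled} first derivatives converge at the single point $b_n$ gives you no control at all over $\partial_z^2\widehat{s}_n$, which is what the quadratic remainder requires; to conclude one needs $\partial_z^2\widehat{s}_n = o(\ell_n^2)$ near $b_n$, and this is a genuine assertion, not a consequence of (1)--(3) at a point. The way to obtain it is to show that $\ell_n^{-1}\partial_z\widehat{s}_n$ stays uniformly bounded (indeed converges) on a fixed subdisc of $\BD$, e.g.\ by the linearity and continuity of the transfer map in the $V(A,K)$-variable or by the $L^2$ bound on $\partial_z\widehat{s}_n$ coming from the quadraticity of the canonical height; a Cauchy estimate then yields $\partial_z^2\widehat{s}_n = O(\ell_n)$ and kills the error term. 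Without some version of this, the step is circular.

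A secondary gap: the Betti trivialization $E \cong \Lie(\CA_b)\times\BD$ is, as you note, only real-analytic, so the composition $\BD\to E\to\Lie(\CA_b)$ is not holomorphic and your Taylor expansion silently omits the $\partial_{\bar z}$ term. That term happens to be harmless --- one has $\bar\partial\widehat{s}_n = -\theta\,\widehat{s}_n$ for a fixed $(0,1)$-form $\theta$ encoding the failure of the Betti frame to be holomorphic, and $\widehat{s}_n$ stays bounded near $b_n$, so $\ell_n^{-1}\bar z\,\partial_{\bar z}\widehat{s}_n(b_n) = O(\ell_n^{-1}|z|)\to 0$ --- but this needs to be said; as written the expansion is for a holomorphic map, which $\widehat{s}_n$ in Betti coordinates is not. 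The ``key preliminary'' identification of $\ell_n^{-1}\partial_z\widehat{s}_n(b_n)$ with $\delta(s_\infty,v_b)$ is asserted rather than proved, which is acceptable for a sketch, but you should recognize that it is precisely the content of the definition of $\delta$ in \cite[\S 2.2]{XY2} and requires unwinding, not merely invoking.
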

Combine Proposition \ref{closure}, Theorem \ref{non-degeneracy} and Theorem \ref{linear}, we have the following corollary.
\begin{corollary}\label{dim}
$(B, \pi\colon\CA\to B, K, A)$ are as above. Let $\{s_{n}\}_{n\geq1}$ be a sequence in $\CA(B)$ satisfying the condition $(2)$ in \emph{Theorem \ref{linear}}. Denote by $G$ the smallest abelian subvariety of $A$ such that $s_{\infty}$ lies in $G(K)_{\RR}$. Denote by $\CG$ the Zariski closure of $G$ in $\CA$. Assume that the natural map ${\rm End}_{K}(A)\to{\rm{End}}_{\ol{K}}(A_{\ol{K}})$ is an isomorphism. Then for all $b\in B(\CC)$ outside a subset of measure $0$, there is a linear entire curve in $\CA_{b}$ which is the unique limit of a re-parametrization of a subsequence $($depending on $b)$ of $\{s_{n}\}_{n\geq1}$, and the Zariski closure of this linear entire curve in $\CA_{b}$ is a translation of $\CG_{b}$.
\end{corollary}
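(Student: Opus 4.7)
The plan is to identify \emph{almost all} $b$ with the complement of the union of two measure-zero sets produced by Theorem \ref{non-degeneracy} and Proposition \ref{closure}, and then invoke Theorem \ref{linear} fibrewise. Concretely, let $\Sigma(s_\infty)\subset B$ be the measure-zero set furnished by Theorem \ref{non-degeneracy} applied to the nonzero element $s_\infty\in V(A,K)$ (so that $\delta(s_\infty,\cdot)$ is nonzero off $\Sigma(s_\infty)$), and let $S(A)\subset B(\CC)$ be the measure-zero set of Proposition \ref{closure} applied with $s=s_\infty$, so that $G=G(s_\infty)$ and $\CG$ are exactly as in the corollary. I would take the exceptional locus to be $S(A)\cup\Sigma(s_\infty)$.

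Now fix $b$ outside this bad set. Choose an open unit disc $\BD\subset B$ centred at $b$ with standard coordinate $z$, and let $v_b\in T_bB$ correspond to $v_{\rm st}=\tfrac{d}{dz}\in T_0\BD$. The key step is to verify the three hypotheses of Theorem \ref{linear} after passing to a subsequence. Take $b_n:=b$ for all $n$; since $\CA_b$ is a complex abelian variety and hence compact, the sequence $\{s_n(b)\}\subset \CA_b$ has a convergent subsequence $s_{n_k}(b)\to x$ for some $x\in\CA_b$. Condition~(1) of Theorem \ref{linear} then holds with $b_{n_k}=b$ and limit $x$; condition~(2) is inherited from the given hypothesis on $\{s_n\}$, as it involves only the classes $\ell_n^{-1}s_n$ in $V(A,K)$ and so is preserved by passing to subsequences; and condition~(3) holds because $b\notin\Sigma(s_\infty)$.

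Theorem \ref{linear} then supplies the re-parametrization $\phi_{n_k}(z)=s_{n_k}(b+\ell_{n_k}^{-1}z)$ converging uniformly on compact subsets of $\CC$ to the linear entire curve $\phi_{(x,\delta(s_\infty,v_b))}\colon\CC\to\CA_b$, which is the desired unique limit. To identify its Zariski closure in $\CA_b$, note that the image $\CF(x,\delta(s_\infty,v_b))=\CF(0,\delta(s_\infty,v_b))+x$ is a translate of $\CF(0,\delta(s_\infty,v_b))$, whose Zariski closure in $\CA_b$ equals $\CG_b$ by Proposition \ref{closure} (using $b\notin S(A)$). Hence the closure of the limit entire curve in $\CA_b$ is $\CG_b+x$, a translate of $\CG_b$, as claimed. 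The only mildly delicate point is the subsequence extraction needed to produce $x\in\CA_b$; this is exactly what forces the statement to speak of a subsequence depending on $b$, and it is handled cleanly by compactness of $\CA_b$. Everything else is a direct application of results already stated.
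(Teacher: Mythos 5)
Your proof is correct and follows essentially the same route as the paper's: take the exceptional set to be $S(A)\cup\Sigma(s_\infty)$ from Proposition \ref{closure} and Theorem \ref{non-degeneracy}, fix $b$ outside it, use compactness of $\CA_b$ to extract a convergent subsequence with $b_n\equiv b$, verify the hypotheses of Theorem \ref{linear}, and identify the Zariski closure of the resulting linear entire curve with the translate $x+\CG_b$ via Proposition \ref{closure}.
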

\begin{proof}
By Proposition \ref{closure} and Theorem \ref{non-degeneracy}, there are two subsets $S(A), \Sigma(s_{\infty})$ of $B$ of measure 0 such that for all $b\in B(\CC)\setminus(S(A)\cup\Sigma(s_{\infty}))$, the Zariski closure of the linear leaf $\CF(0, \delta(s_{\infty}, v_{b}))$ in $\CA_{b}$ is equal to $\CG_{b}$, and the transfer map $\delta(s_{\infty},\cdot)\colon T_{b}(B)\to{\rm Lie}(\CA_{b})$ is nonzero. Fix one such $b$. By compactness, there is a limit point $x\in\CA_{b}$ of the sequence $\{s_{n}(b)\}_{n\geq1}$. Passing to a subsequence $\{n_{j}\}_{j\geq1}\subseteq\BZ_{>0}$, taking $b_{n_{j}}=b$ and $x=\lim_{j\to\infty}s_{n_{j}}(b)$ in Theorem \ref{linear}, we get a linear entire curve $\phi_{(x, \delta(s_{\infty}, v_{b}))}$ of $\CA_{b}$ which is the unique limit of a re-parametrization of $\{s_{n_{j}}\}_{j\geq1}$, and its Zariski closure in $\CA_{b}$ is equal to the translation $x+\CG_{b}$. This finishes the proof.
\end{proof}
Using the same method as in \cite[Theorem 3.3]{arXiv:2308.08117}, we can prove the following corollary which will be used in \S\ref{5.4}.
\begin{corollary}\label{location}
$(B, \pi\colon\CA\to B, K, A)$ are as above. Let $\{s_{n}\}_{n\geq1}$ be a sequence of points in $A(K)$. Assume that the image of $\{s_{n}\}_{n\geq1}$ in any nontrivial quotient abelian variety of $A$ has height tending to infinity as $n\to\infty$. And the natural map ${\rm End}_{K}(A)\to{\rm{End}}_{\ol{K}}(A_{\ol{K}})$ is an isomorphism. Then there is a subset $S$ of $B(\CC)$ of measure $0$ such that for every $b\in B(\CC)\backslash S$, there exists a Zariski dense subset $U_{b}\subseteq\CA_{b}$ such that each $y\in U_{b}$ is a limit point of $\{s_{n}(B)\}_{n\geq1}$, i.e. there is a sequence $\{b_{n}\}_{n\geq1}$ in $B(\CC)$ such that the sequence $\{s_{n}(b_{n})\}_{n\geq1}$ has a subsequence converging to $y$ in $\CA$.
\end{corollary}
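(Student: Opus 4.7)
The plan is to argue by induction on $d = \dim A$, using Corollary \ref{dim} to produce linear entire curves in the fiber $\CA_b$ and combining them with limits obtained inductively in quotient abelian schemes. For the base case $d = 1$, $A$ is an elliptic curve and the hypothesis forces $\|s_n\|_{V(A,K)} \to \infty$; I would choose $\ell_n = \|s_n\|$ and pass to a subsequence so that $\ell_n^{-1} s_n \to s_\infty \in V(A,K) \setminus \{0\}$, which necessarily has $G(s_\infty) = A$ since $\dim A = 1$. Corollary \ref{dim} then produces, for $b$ outside a measure-zero set, a linear entire curve in $\CA_b$ with Zariski closure $\CA_b$; its image consists of limit points of $\{s_n(B)\}$, so $U_b$ is Zariski dense.

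For $d \geq 2$, assume the statement for abelian schemes of generic fiber dimension $< d$. After a harmless finite extension of $K$ I may arrange that $\End_K(A') \cong \End_{\ol{K}}(A'_{\ol{K}})$ for every quotient $A'$ of $A$ encountered in the induction. Take $\ell_n = \|s_n\|$ and a subsequence $\{n_j\}$ so that $\ell_{n_j}^{-1} s_{n_j} \to s_\infty \in V(A,K)\setminus\{0\}$, and let $G = G(s_\infty)$ (well-defined by Lemma \ref{minimal}). If $G = A$, Corollary \ref{dim} finishes directly. Otherwise $G \subsetneq A$, and I pass to the quotient abelian scheme $\bar{\CA} = \CA/\CG$, with generic fiber $\bar{A} = A/G$ of dimension $<d$. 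The sequence $\bar{s}_n = \pi_G(s_n)$ inherits the hypothesis on $\bar{A}$: every nontrivial quotient of $\bar{A}$ equals $A/H$ for some $G \subseteq H \subsetneq A$, and $\|\pi_H(s_n)\| \to \infty$ by assumption. By the inductive hypothesis applied to $\{\bar{s}_{n_j}\}$, the set $\bar{U}_b$ of limit points of $\{\bar{s}_{n_j}(B)\}$ in $\bar{\CA}_b$ is Zariski dense in $\bar{\CA}_b$ for $b$ outside a measure-zero set.

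To combine the two ingredients, I define $U_b^\ast \subseteq U_b$ to be the set of $y \in \CA_b$ obtained as $\lim_{k\to\infty} s_{n_{j_k}}(b_k)$ for some sub-subsequence $\{n_{j_k}\}$ of $\{n_j\}$ and some $b_k \to b$. For each $\bar{y} \in \bar{U}_b$ I choose $\{n_{j_k}\}$ and $b_k \to b$ with $\bar{s}_{n_{j_k}}(b_k) \to \bar{y}$; by compactness of the fibers of $\CA \to B$, a further subsequence of $s_{n_{j_k}}(b_k)$ converges to some $y \in \CA_b$ lying over $\bar{y}$, so $\pi_G(U_b^\ast) \supseteq \bar{U}_b$. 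Conversely, for each $y \in U_b^\ast$, along the chosen sub-subsequence the rescaling still tends to $s_\infty$, so Theorem \ref{linear} produces a linear entire curve through $y$ in the direction $\delta(s_\infty, v_b)$ (nonzero by Theorem \ref{non-degeneracy}) whose Zariski closure is $y + \CG_b$ (Proposition \ref{closure}), and every point of this curve lies in $U_b^\ast$. Hence $\overline{U_b^\ast}^{\mathrm{zar}} \supseteq U_b^\ast + \CG_b = \pi_G^{-1}(\pi_G(U_b^\ast)) \supseteq \pi_G^{-1}(\bar{U}_b)$; since $\pi_G$ is a smooth surjective morphism, the preimage of a Zariski dense set is Zariski dense in $\CA_b$, so $U_b$ is Zariski dense. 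Taking $S$ to be the (countable) union over the induction stages of the measure-zero exceptional subsets of $B$ completes the argument.

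The hard part will be the synchronization in the combination step: the sub-subsequence $\{n_{j_k}\}$ that witnesses $s_{n_{j_k}}(b_k) \to y$ must simultaneously be a subsequence along which the rescaling $\ell^{-1} s \to s_\infty$ persists, so that Theorem \ref{linear} applies to the chosen basepoint $y$. This is handled by iterated diagonal extraction using compactness of the unit sphere in $V(A,K)$ and of the fibers of $\CA \to B$, but organizing these extractions uniformly across all $\bar{y} \in \bar{U}_b$ is the most delicate technical point.
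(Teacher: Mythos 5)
Your proof is correct and follows essentially the same route as the paper's: induction on $\dim A$, extraction of a rescaled limit $s_\infty$, the dichotomy on whether $G(s_\infty) = A$, and in the nontrivial case passing to the quotient $\bar{A} = A/G(s_\infty)$, applying the inductive hypothesis there, and lifting the resulting Zariski dense set of limit points back via properness together with Theorem~\ref{linear}/Corollary~\ref{dim} to fill in the fibers of $\CA_b \to \bar{\CA}_b$. The ``synchronization'' worry you flag at the end is a non-issue: once $\ell_{n_j}^{-1}s_{n_j} \to s_\infty$ is arranged along $\{n_j\}$, it automatically persists along every sub-subsequence, so no diagonal argument across the various $\bar{y} \in \bar{U}_b$ is needed.
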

\begin{proof}
We prove by induction on $\dim A$. When $\dim A=1$, any entire curve on $\CA_{b}$ is surjective, so the statement is trivial by Theorem \ref{linear}. Now we assume that the corollary is true for all abelian varieties with dimension less than $\dim A$.

As in Theorem \ref{linear}, choose $\ell_{n}\to+\infty$ such that the image of $\ell_{n}^{-1}s_{n}$ in $V(A,K)$ converges to a nonzero element $s_{\infty}\in V(A,K)$. Then $G(s_{\infty})$ is a nontrivial abelian subvariety of $A$. If $G(s_{\infty})=A$, then the conclusion holds by Corollary \ref{dim}. Now we assume that $G(s_{\infty})\subsetneqq A$. Let $A'=A/G(s_{\infty})$ be the quotient abelian variety and let $s'_{n}$ be the image of $s_{n}\in A(K)\cong\CA(B)$ in $A'(K)$. Moreover, take an integral model $\CA'$ of $A'$ over $B$. Shrinking $B$ if necessary, we may assume that $\CA'$ is an abelian scheme over $B$ and $A\to A'$ extends to a $B$-morphism $\CA\to\CA'$. Now we have data $(B,\CA'\to B,K,A',\{s'_{n}\}_{n\geq1})$. They also satisfy the assumptions in this corollary. Therefore by induction hypothesis, there is a subset $S'$ of $B(\CC)$ of Lebesgue measure 0 such that for every $b\in B(\CC)\backslash S'$, there exists a Zariski dense subset $U'_{b}\subseteq \CA'_{b}$ such that each $y\in U'_{b}$ is a limit point of $\{s'_{n}(B)\}_{n\geq1}$.

Let $S$ be the union of $S'$ and the exception set of Corollary \ref{dim}, which is also of Lebesgue measure 0. Now for any $b\in B(\CC)\backslash S$ and any point $y'\in U'_{b}$, we can write $y'=\lim_{k\to\infty}s'_{n_{k}}(b_{n_{k}})$, where $\{n_{k}\}$ is a subsequence of $\BZ_{>0}$. Now consider the points $\{s_{n_{k}}(b_{n_{k}})\}_{k\geq1}$. By properness of $\CA\to\CA'$, we know that these points have a limit point $y$ on $\CA$, whose image in $\CA'$ is $y'\in U_{b}$. Now by Corollary \ref{dim} and our assumption on $S$, we see that there is a linear entire curve in $\CA_{b}$ which is the unique limit of a re-parametrization of a subsequence of $\{s_{n_{k}}\}_{k\geq1}$, and the Zariski closure of this entire curve is exactly $y+\CG_{b}$. Here $\CG$ is the Zariski closure of $G(s_{\infty})$ in $\CA$. Note that $y+\CG_{b}$ is just the fibre of $\CA_{b}\to\CA'_{b}$ over the point $y'\in\CA'_{b}$. In other words, for any $y'\in U'_{b}$, there is an entire curve in $\CA_{b}$ which is Zariski dense in the fibre of $\CA_{b}\to\CA'_{b}$ and which is the limit of a re-parametrization of a subsequence of $\{s_{n_{k}}\}_{k\geq1}$.

Let $U_{b}\subseteq \CA_{b}$ be the set of all limit points of $\{s_{n}(B)\}_{n\geq1}$ in $\CA_{b}$. By the argument above, the image of $U_{b}$ in $\CA'$ contains the set $U'_{b}$ which is Zariski dense in $\CA'_{b}$, and the fibre of $U_{b}$ over each $y'\in U'_{b}$ is Zariski dense in the fibre of $\CA\to\CA'$ over $y'$. This just means that $U_{b}$ is Zariski dense in $\CA_{b}$. This finishes the proof.
\end{proof}

\section{Boundedness of height on generic sequences}\label{5}
The goal of this section is to prove Proposition \ref{red-introduction}, which asserts that the height of any generic sequence of rational points in $X(K)$ is bounded. Here $X$ can be a projective variety of general type which has a finite map to an abelian variety. This is the essential step in the proof of Theorem \ref{main}. Now we restate the proposition as follows.

\begin{proposition}[Proposition \ref{red-introduction}]\label{generic1}
Let $K$ be the function field of a smooth projective curve $B$ over $\CC$. Let $X$ be a projective variety of \emph{general type} over $K$. Let $A$ be an abelian variety over $K$ and let $T$ be a projective variety over $\CC$. Let $f\colon X\to A\times_{K} T_{K}$ be a finite morphism over $K$. Let $\{x_{n}\}_{n\geq1}\subseteq X(K)$ be a \emph{generic} sequence of rational points in $X$; that is, any infinite subsequence of $\{x_{n}\}_{n\geq1}$ is Zariski dense in $X$. If the image of $x_{n}$ in $T_{K}(K)$ lies in ${\rm Im}(T(\CC)\to T_{K}(K))$ for each $n\geq1$, then the height of $\{x_{n}\}_{n\geq1}$ is bounded.
\end{proposition}

In \S\ref{5.1} we recall some Nevanlinna theory which will be used in the proof. As mentioned in \S\ref{idea}, we will extend classical Nevanlinna theory to holomorphic maps on open discs and consider the limiting behaviour. In \S\ref{5.2} we prove a key lemma (Lemma \ref{key}), which was already mentioned in \S\ref{jishu} regarding the first difficulty.

The proof of Proposition \ref{generic1} is carried out in the next three subsections. In \S\ref{5.3} we perform an algebraic-geometric reduction, which resolves the latter two difficulties mentioned in \S\ref{jishu}. In \S\ref{5.4} and \S\ref{5.5} we realize the key counting argument in two stages: \S\ref{5.4} obtains an upper bound for the numbers of tangency points between the branch divisor and the sections corresponding to the images in $A$ of certain rational points of $X$, or equivalently, a lower bound for their transversal intersection numbers. Finally in \S\ref{5.5}, we derive a contradiction via a reverse estimate.

We need to mention that the arguments in \S\ref{5.3}, \S\ref{5.4} and \S\ref{5.5} are coherent; although each subsection mainly focuses on its title, certain assumptions needed later are occasionally introduced early and will be explicitly noted. The reader may start directly from \S\ref{5.3} and come back to the previous two subsections when they need.
\subsection{Preliminaries in Nevanlinna theory}\label{5.1}

\noindent\textbf{Counting function}\medskip

\noindent Let $X$ be a projective variety over $\CC$ and let $Z\subseteq X$ be a closed subscheme. Let $f\colon \CC\to X$ be a holomorphic map such that $f(\CC)\nsubseteq Z$. We recall the definition of $N(r,f,Z)$ in \cite{zbMATH02144170}.

Let $D_{1},\dots,D_{l}$ be effective Cartier divisors such that $Z=D_{1}\cap\dots\cap D_{l}$. This means that $\CI_{Z}=\CI_{D_{1}}+\dots+\CI_{D_{l}}$, where $\CI_{Z},\CI_{D_{1}},\dots,\CI_{D_{l}}$ are the defining ideal sheaves in $\CO_{X}$. Set
$$
{\rm ord}_{z}f^{*}Z\colonequals \min\{{\rm ord}_{z}f^{*}D_{1},\dots,{\rm ord}_{z}f^{*}D_{l}\}.
$$
When $f(\CC)\subseteq {\rm Supp}(D_{i})$, we set ${\rm ord}_{z}f^{*}D_{i}=+\infty$. The definition of ${\rm ord}_{z}f^{*}Z$ does not depend on the choice of the Cartier divisors $D_{1},\dots,D_{r}$. We define the counting function by
$$
N(r,f,Z)\colonequals \int_{1}^{r}\left(\sum\limits_{z\in\BD_{t}}{\rm ord}_{z}f^{*}Z\right)\frac{{\rm d}t}{t},
$$
$$
N^{(k)}(r,f,Z)\colonequals \int_{1}^{r}\left(\sum\limits_{z\in\BD_{t}}\min\{k,{\rm ord}_{z}f^{*}Z\}\right)\frac{{\rm d}t}{t}.
$$

Here $\BD_{t}=\{z\in\CC\big| |z|<t\}$ is the open disc of radius $t$ with centre 0. \medskip

\noindent Moreover, when $f\colon \BD_{R}\to X$ is a holomorphic map from an open disc $\BD_{R}\ (R>0)$ to $X$ with $f(\BD_{R})\nsubseteq Z$, we can define the counting function $N(r,f,Z),N^{(k)}(r,f,Z)$ for any $r<R$ in the same way as above.\medskip\medskip

\noindent\textbf{Characteristic function and proximity function}\medskip

\noindent Let $X$ be a projective variety over $\CC$. Let $L$ be a line bundle on $X$ with a smooth Hermitian metric $\left\|\cdot\right\|$ on it. Let $c_{1}(L,\left\|\cdot\right\|)$ be the Chern form. Let $f\colon \CC\to X$ be a holomorphic curve. Define the characteristic function by
$$
T(r,f,L,\left\|\cdot\right\|)\colonequals \int_{1}^{r}\left(\int_{\BD_{t}}f^{*}c_{1}(L,\left\|\cdot\right\|)\right)\frac{{\rm d}t}{t}.
$$
For an effective Cartier divisor $D$ on $X$ associated to a section $s_{D}\in \Gamma(X,L)$, if $f(\CC)\nsubseteq D$, define the proximity function by
$$
m(r,f,D,\left\|\cdot\right\|)\colonequals -\int_{0}^{2\pi}\log\left\|s_{D}(f(r{\rm e}^{{\rm i}\theta}))\right\|\frac{{\rm d\theta}}{2\pi}
$$
As $X$ is compact, $\left\|s_{D}(x)\right\|(x\in X)$ is bounded above by a finite constant. Thus $m(r,f,D,\left\|\cdot\right\|)$ is bounded below by a constant, which depends only on the metric $\left\|\cdot\right\|$. In particular, this lower bound is uniform for all $f$ and $r$.

We also set $T(r,f,D,\left\|\cdot\right\|)\colonequals T(r,f,L,\left\|\cdot\right\|)$. Moreover, for any holomorphic map\\
$f\colon \BD_{R}\to X$, we can define the characteristic function $T(r,f,L,\left\|\cdot\right\|)$ in the same way as above. If in addition $f(\BD_{R})\nsubseteq D$, we can also define $m(r,f,D,\left\|\cdot\right\|)$ for any $r<R$.

The above definitions can be extended by $\QQ$-linearity to the case when $D$ is a $\QQ$-Cartier divisor (or $L$ is a $\QQ$-line bundle with a smooth Hermitian metric on some multiple of $L$).

The next lemma assures that for two different metrics on $L$, the difference of the corresponding characteristic functions and proximity functions are bounded.
\medskip

\begin{lemma}\label{property}
$X,L,D$ as above. Let $\left\|\cdot\right\|, \left\|\cdot\right\|'$ be two smooth Hermitian metrics on $L$. Then there is a constant $C>0$ depending only on these two metrics such that for any holomorphic map $f\colon \BD_{R}\to X$ and any $0<r<R$,

\begin{enumerate}[label=\normalfont(\arabic*),leftmargin=*]

\item $\left|T(r,f,L,\left\|\cdot\right\|)-T(r,f,L,\left\|\cdot\right\|')\right|\leq C$.

\item If in addition $f(\BD_{R})\nsubseteq D$, then $\left|m(r,f,D,\left\|\cdot\right\|)-m(r,f,D,\left\|\cdot\right\|')\right|\leq C$.
\end{enumerate}

\end{lemma}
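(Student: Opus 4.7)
The plan is to exploit the fact that two smooth Hermitian metrics on the same line bundle over the compact variety $X$ differ by multiplication by $e^{\psi}$ for a smooth real-valued function $\psi\colon X\to\RR$. Writing $\|\cdot\|' = e^{\psi}\|\cdot\|$, compactness of $X$ yields $M := \sup_{X}|\psi| < \infty$. The constant $C$ in the lemma will be an explicit multiple of $M$, and in particular will not depend on $f$, $R$, or $r$.

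Part (2) is immediate substitution: since $\|s_{D}\|' = e^{\psi}\|s_{D}\|$,
$$m(r,f,D,\|\cdot\|') - m(r,f,D,\|\cdot\|) = -\int_{0}^{2\pi}\psi(f(re^{i\theta}))\,\frac{d\theta}{2\pi},$$
whose absolute value is at most $M$. For part (1), the essential input is that globally on $X$ one has $c_{1}(L,\|\cdot\|') - c_{1}(L,\|\cdot\|) = dd^{c}\eta$ for a smooth function $\eta$ (a multiple of $-\psi$, with the exact constant depending on the convention for $dd^{c}$). Pulling back by $f$ and substituting into the definition of $T$,
$$T(r,f,L,\|\cdot\|') - T(r,f,L,\|\cdot\|) = \int_{1}^{r}\left(\int_{\BD_{t}}dd^{c}(f^{*}\eta)\right)\frac{dt}{t}.$$
Setting $g = f^{*}\eta$, Stokes' theorem on $\BD_{t}$ combined with a polar-coordinate computation on $\partial\BD_{t}$ gives $\int_{\BD_{t}}dd^{c}g = \tfrac{t}{2}\overline{g}{}'(t)$ for $\overline{g}(t) := \tfrac{1}{2\pi}\int_{0}^{2\pi}g(te^{i\theta})\,d\theta$. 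The outer factor $\tfrac{1}{t}$ then cancels, and the integral collapses by the fundamental theorem of calculus to $\overline{g}(r) - \overline{g}(1)$, which is bounded in absolute value by $2\sup|g|$, and hence by a bounded multiple of $M$.

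I expect no serious obstacle: the statement is the standard \emph{metric independence of $T$ and $m$ up to $O(1)$} in Nevanlinna theory. The only points requiring attention are the sign and normalization conventions between $c_{1}$, $dd^{c}$, and $\partial\bar\partial$ (which affect only the numerical value of $C$), and verifying that the Stokes calculation goes through uniformly for every holomorphic $f\colon\BD_{R}\to X$ and every $r\in(0,R)$, including the case $r<1$, where the outer integral in the definition of $T$ is interpreted with the opposite sign.
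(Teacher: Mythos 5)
Your proof is correct and takes essentially the same approach as the paper: for part (1) you write the difference of Chern forms as $dd^c$ of a smooth global potential, apply Stokes' theorem twice to collapse the double integral to boundary averages, and use compactness of $X$ to bound these; for part (2) you use that the two metrics have globally bounded ratio, which is exactly the paper's argument. The only cosmetic difference is that you produce the potential explicitly as (a multiple of) $-\psi$ with $\|\cdot\|'=e^{\psi}\|\cdot\|$, whereas the paper cites the $\partial\bar\partial$-lemma; your observation that the $\partial\bar\partial$-lemma is not really needed here is a mild simplification.
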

\begin{proof}
First consider $(1)$. Since $c_{1}(L,\left\|\cdot\right\|)-c_{1}(L,\left\|\cdot\right\|')$ is a smooth exact $(1,1)$-form, the $\partial\ol{\partial}$ lemma yields that there is a smooth function $\varphi$ on $X$ such that $c_{1}(L,\left\|\cdot\right\|)-c_{1}(L,\left\|\cdot\right\|')={\rm dd^{c}}\varphi$. Hence we have
$$
\int_{1}^{r}\frac{{\rm d}t}{t}\int_{\BD_{t}}f^{*}c_{1}(L,\left\|\cdot\right\|)=\int_{1}^{r}\frac{{\rm d}t}{t}\int_{\BD_{t}}f^{*}c_{1}(L,\left\|\cdot\right\|')+\int_{1}^{r}\frac{{\rm d}t}{t}\int_{\BD_{t}}{\rm dd^{c}}\varphi(f(z))
$$
By Stokes' formula, we get
\begin{align*}
\int_{1}^{r}\frac{{\rm d}t}{t}\int_{\BD_{t}}{\rm dd^{c}}\varphi(f(z))&=\int_{1}^{r}\frac{{\rm d}t}{t}\int_{\partial\BD_{t}}{\rm d^{c}}\varphi(f(z))\\
&=\frac{1}{2}\int_{1}^{r}\frac{{\rm d}t}{t}\int_{0}^{2\pi}t\frac{\partial}{\partial t}\varphi(f(t{\rm e}^{{\rm i}\theta}))\frac{{\rm d}\theta}{2\pi}\\
&=\frac{1}{2}\int_{0}^{2\pi}\varphi(f(r{\rm e}^{{\rm i}\theta}))\frac{{\rm d}\theta}{2\pi}-\frac{1}{2}\int_{0}^{2\pi}\varphi(f({\rm e}^{{\rm i}\theta}))\frac{{\rm d}\theta}{2\pi}.
\end{align*}
Since $\varphi$ is bounded, the right-hand side is bounded by a constant $C_{1}>0$ which depends only on $\left\|\cdot\right\|,\left\|\cdot\right\|'$.

For (2), as $X$ is compact, there exists $C_{2}>1$ (depending only on $\left\|\cdot\right\|,\left\|\cdot\right\|'$) such that $C_{2}^{-1}\left\|\cdot\right\|\leq\left\|\cdot\right\|'\leq C_{2}\left\|\cdot\right\|$. Thus $-\log\left\|s_{D}(x)\right\|$ and $-\log\left\|s_{D}(x)\right\|'$ are bounded by $\log C_{2}$. By definition we see that $\left|m(r,f,D,\left\|\cdot\right\|)-m(r,f,D,\left\|\cdot\right\|')\right|\leq \log C_{2}$. Now take $C=\max\{C_{1},\log C_{2}\}$, we finish the proof.
\end{proof}

\begin{remark}
Because of Lemma \ref{property}, most references use $T(r,f,L)$ and $m(r,f,D)$ to denote a real-valued function class on $0<r<R$ (two functions are in the same class if and only if their difference is a bounded function). However, in our application, we need to control the error term uniformly for all $f$ and $r$ (not just for $r$). Therefore, it is more convenient to fix the metrics on all the line bundles we need to use. After doing so, we can make the Nevanlinna First Main Theorem more precisely in Theorem \ref{fmt} below.
\end{remark}

\begin{theorem}[First Main Theorem]\label{fmt}
$X,L,\left\|\cdot\right\|,D$ as above. Then for any holomorphic map $f\colon \BD_{R}\to X$ with $f(\BD_{R})\nsubseteq D$ and any $0<r<R$, we have
 $$
 T(r,f,D,\left\|\cdot\right\|)=m(r,f,D,\left\|\cdot\right\|)+N(r,f,D)-m(1,f,D,\left\|\cdot\right\|).
 $$
\end{theorem}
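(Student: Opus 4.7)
The First Main Theorem is the classical Jensen identity applied to the pulled-back section $s_{D}\circ f$, and my plan is to derive it by combining the Poincar\'{e}--Lelong formula with the Stokes-type computation that has already appeared in the proof of Lemma \ref{property}. In outline: I will express $dd^{c}\log\|s_{D}\circ f\|$ as a current on $\BD_{R}$, integrate this current against ${\rm d}t/t$ over $t\in[1,r]$ by a disk-by-disk Stokes argument, and match the three resulting pieces with $T(r,f,L,\|\cdot\|)$, $N(r,f,D)$, and $m(r,f,D,\|\cdot\|)-m(1,f,D,\|\cdot\|)$.

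First, since $f(\BD_{R})\not\subseteq D$, the set $f^{-1}(D)\cap \BD_{R}$ is discrete, and $u(z):=\log\|s_{D}(f(z))\|$ is locally integrable on $\BD_{R}$ with logarithmic singularities along this discrete set. The Poincar\'{e}--Lelong identity for a section of a Hermitian line bundle, pulled back by $f$, reads (as currents on $\BD_{R}$, with the normalization of $dd^{c}$ used in the proof of Lemma \ref{property})
$$dd^{c}u \;=\; \tfrac{1}{2}\bigl(f^{*}[D]-f^{*}c_{1}(L,\|\cdot\|)\bigr),$$
where $f^{*}[D]=\sum_{z}{\rm ord}_{z}(f^{*}D)\cdot\delta_{z}$ is the divisorial current; the factor $1/2$ reflects that we take $\log\|s_{D}\|$ rather than $\log\|s_{D}\|^{2}$.

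Next, I will recycle the Stokes computation at the heart of the proof of Lemma \ref{property}: for any smooth $\varphi$ on $\BD_{R}$,
$$\int_{1}^{r}\frac{{\rm d}t}{t}\int_{\BD_{t}}dd^{c}\varphi \;=\; \tfrac{1}{2}\int_{0}^{2\pi}\varphi(r{\rm e}^{{\rm i}\theta})\frac{{\rm d}\theta}{2\pi} - \tfrac{1}{2}\int_{0}^{2\pi}\varphi({\rm e}^{{\rm i}\theta})\frac{{\rm d}\theta}{2\pi}.$$
I will apply this with $\varphi$ replaced by $u$, which is only $L^{1}_{{\rm loc}}$ with log poles. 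A standard regularization $u_{\epsilon}=u*\rho_{\epsilon}$ (or, equivalently, excising small disks around the zeros of $s_{D}\circ f$ and passing to the limit) extends the identity to $u$. Combining this with the Poincar\'{e}--Lelong identity above, the left-hand side becomes $\tfrac{1}{2}\bigl(N(r,f,D)-T(r,f,L,\|\cdot\|)\bigr)$, while the right-hand side equals $\tfrac{1}{2}\bigl(-m(r,f,D,\|\cdot\|)+m(1,f,D,\|\cdot\|)\bigr)$ directly from the definitions.

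Multiplying by $2$, rearranging, and recalling the convention $T(r,f,D,\|\cdot\|):=T(r,f,L,\|\cdot\|)$ then yields the stated identity. The only step requiring care is the regularization argument justifying that the divisorial current contribution to $\int_{\BD_{t}}dd^{c}u$ equals $\tfrac{1}{2}\sum_{z\in\BD_{t}}{\rm ord}_{z}(f^{*}D)$; this is a purely local question and reduces, near each zero $z_{0}$ of $s_{D}\circ f$ of multiplicity $m$, to the elementary identity $dd^{c}\log|z-z_{0}|=\tfrac{1}{2}\delta_{z_{0}}$, together with the local factorization $s_{D}\circ f=(z-z_{0})^{m}\cdot(\text{holomorphic unit})$. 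No deeper obstacle is present; the whole proof is a direct application of Poincar\'{e}--Lelong combined with the Stokes identity already used in Lemma \ref{property}.
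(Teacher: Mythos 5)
Your proposal is essentially identical to the paper's proof: both invoke the Poincar\'{e}--Lelong formula for $s_{D}\circ f$ and then apply the Stokes-type computation already carried out in the proof of Lemma \ref{property} to convert $\int_{1}^{r}\frac{{\rm d}t}{t}\int_{\BD_{t}}{\rm dd^{c}}\log\|s_{D}\circ f\|$ into the boundary proximity terms. The extra care you take spelling out the regularization of the $L^{1}_{\rm loc}$ function with log poles is a reasonable addition, but it does not change the route.
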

\begin{proof}
By the Poincar\'{e}--Lelong formula, we have
$$
-2{\rm dd^{c}}\log\left\|s_{D}\circ f(z)\right\|=-\sum\limits_{w\in\BD_{R}}({\rm ord}_{w}f^{*}D)\delta_{w}+f^{*}c_{1}(\CO_{X}(D),\left\|\cdot\right\|),
$$
where $\delta_{w}$ is the Dirac current supported on $w$. Integrating over $\BD_{r}$, we get
$$
-2\int_{\BD_{r}}{\rm dd^{c}}\log\left\|s_{D}\circ f(z)\right\|=-\sum\limits_{w\in\BD_{r}}({\rm ord}_{w}f^{*}D)+\int_{\BD_{r}}f^{*}c_{1}(\CO_{X}(D),\left\|\cdot\right\|).
$$
Hence, we get
\begin{align*}
-N(r,f,D)+T(r,f,D,\left\|\cdot\right\|)&=-2\int_{1}^{r}\frac{{\rm d}t}{t}\int_{\BD_{t}}{\rm dd^{c}}\log\left\|s_{D}\circ f(z)\right\|\\
&= -2\int_{1}^{r}\frac{{\rm d}t}{t}\int_{\partial\BD_{t}}{\rm d^{c}}\log\left\|s_{D}\circ f(z)\right\|\\
&=-2\cdot\frac{1}{2}\int_{1}^{r}\frac{{\rm d}t}{t}\int_{0}^{2\pi}t\frac{\partial}{\partial t}\log\left\|s_{D}\circ f(t{\rm e}^{{\rm i}\theta})\right\|\frac{{\rm d}\theta}{2\pi}\\
&=m(r,f,D,\left\|\cdot\right\|)-m(1,f,D,\left\|\cdot\right\|).
\end{align*}
\end{proof}
The next lemma considers the limit behaviour of the three functions, which will be used in our proof in \S\ref{5.4}.
\begin{lemma}\label{limits}
$X,L,\left\|\cdot\right\|,D$ as above. Let $\{R_{n}\}_{n\geq1}$ be an increasing sequence of positive real numbers or $+\infty$. Let $R_{\infty}$ be the limit of $R_{n}$. Let $f_{n}\colon \BD_{R_{n}}\to X\ (1\leq n\leq\infty)$ be a sequence of holomorphic maps. Here we set $\BD_{+\infty}=\CC$. Assume that $f_{n}$ converges uniformly to $f_{\infty}$ on any compact subset of $\BD_{R_{\infty}}$. Then for any $0<r<R_{\infty}$, $T(r,f_{n},L,\left\|\cdot\right\|)$ tends to $T(r,f_{\infty},L,\left\|\cdot\right\|)$ as $n$ tends to infinity. Moreover, if $f_{\infty}(z)\notin D$ for all $z\in\partial\BD_{r}$, we have $($when $n$ tends to infinity$)$
$$
N(r,f_{n},D),\ m(r,f_{n},D,\left\|\cdot\right\|)
$$
tends to
$$
N(r,f_{\infty},D),\ m(r,f_{\infty},D,\left\|\cdot\right\|)
$$
respectively.

Furthermore, if $D$ is replaced by a closed subscheme $Z\,($of codimension possibly greater than $1)$ in $X$, then for any $0<r<R_{\infty}$, we have $N(r,f_{n},Z)\leq 2N(r,f_{\infty},Z)$ for $n$ large enough $($depending on $r)$.
\end{lemma}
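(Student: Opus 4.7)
The plan is to establish each of the three limits separately, relying on the fact that the uniform convergence $f_n \to f_\infty$ on compacta of $\BD_{R_\infty}$ upgrades, by Cauchy's estimates, to $C^\infty$-convergence on every compact disk $\ol{\BD}_r \Subset \BD_{R_\infty}$. For the characteristic function, this immediately gives $f_n^{*}c_1(L, \bbs) \to f_\infty^{*}c_1(L, \bbs)$ uniformly on $\ol{\BD}_r$ as smooth $(1,1)$-forms, so rewriting $T(r, f_n, L, \bbs)$ as a double integral over $\{(t,z): 1 \le t \le r,\ |z|<t\}$ with uniformly bounded integrand and applying dominated convergence yields $T(r, f_n, L, \bbs) \to T(r, f_\infty, L, \bbs)$.

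For the proximity function the hypothesis $f_\infty(\partial \BD_r) \cap D = \emptyset$ together with compactness of the circle gives $\|s_D \circ f_\infty\| \ge \delta$ for some $\delta > 0$ on $\partial \BD_r$, and by uniform convergence $\|s_D \circ f_n\| \ge \delta/2$ on $\partial \BD_r$ for $n$ large. Since $\log$ is uniformly continuous on $[\delta/2, M]$, where $M$ is a global upper bound for $\|s_D\|$ on the compact variety $X$, the sequence $\log\|s_D \circ f_n\|$ converges uniformly on $\partial \BD_r$ to $\log\|s_D \circ f_\infty\|$, and integrating gives $m(r, f_n, D, \bbs) \to m(r, f_\infty, D, \bbs)$.

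For the counting function $N(r, f_n, D)$ in the Cartier case, the zeros of $s_D \circ f_\infty$ in $\ol{\BD}_r$ form a finite set $\{z_1, \ldots, z_K\} \subset \BD_r$ with multiplicities $m_i$. I would choose pairwise disjoint open disks $V_i \Subset \BD_r$ around $z_i$ so that $f_\infty(\ol{V_i})$ lies in an open subset $U_i \subset X$ on which $L$ trivializes; by uniform convergence, $f_n(\ol{V_i}) \subset U_i$ for $n$ large as well. In these trivializations $s_D \circ f_n$ becomes a sequence of holomorphic functions on $V_i$ converging uniformly to its limit, so Hurwitz's theorem gives total zero multiplicity in $V_i$ exactly $m_i$ for $n$ large, with the individual zeros clustering at $z_i$. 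On the compact remainder $\ol{\BD}_r \setminus \bigcup V_i$, $\|s_D \circ f_\infty\|$ is bounded below, hence so is $\|s_D \circ f_n\|$ for $n$ large, and $f_n^{*}D$ has no zeros there. For each $t \in [1, r]$ outside the finite set $\{|z_1|, \ldots, |z_K|\}$, the inner sum $\sum_{z \in \BD_t}\ord_z f_n^{*}D$ equals $\sum_{|z_i|<t} m_i$ for $n$ large, matching the $f_\infty$ value; dominated convergence in $t$ then yields $N(r, f_n, D) \to N(r, f_\infty, D)$.

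The higher-codimension case is where the main obstacle lies. The identity $\ord_z f^{*}Z = \min_j \ord_z f^{*}D_j$ involves a minimum, and Hurwitz applied to the individual $D_j$'s only gives separate information: at the zeros of $f_n^{*}Z$ into which a single cluster point of $f_\infty^{*}Z$ splits, the minimum may be realized by different $D_j$'s, so equality is lost. The fix is to dominate, for each cluster point $z_i \in f_\infty^{-1}(Z) \cap \ol{\BD}_r$, by a single Cartier divisor $D_{j_i}$ chosen so that $\ord_{z_i} f_\infty^{*}D_{j_i} = \ord_{z_i} f_\infty^{*}Z$; such a $j_i$ exists among those $j$ with $f_\infty \not\subset D_j$, a nonempty set because $f_\infty \not\subset Z$. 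Then Hurwitz on $D_{j_i}$ and the pointwise bound $\ord_z f_n^{*}Z \le \ord_z f_n^{*}D_{j_i}$ yield $\sum_{z \in V_i} \ord_z f_n^{*}Z \le \ord_{z_i} f_\infty^{*}Z$ for $n$ large. Weighting by $\log(r/|z|)$, which stays within a factor $1 + \varepsilon$ of $\log(r/|z_i|)$ when $V_i$ is small and $z_i$ lies in the interior, and whose contribution tends to $0$ for any boundary $z_i \in \partial \BD_r$, one finally obtains $N(r, f_n, Z) \le 2\, N(r, f_\infty, Z)$ for $n$ large; the factor of $2$ is the clean buffer absorbing the inequality-only estimate and the small weight perturbations.
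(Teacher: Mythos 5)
Your proof is correct and follows essentially the same strategy as the paper: direct convergence of pulled-back Chern forms for $T$; uniform continuity of $\log\|s_D\|$ on a compact set away from $D$ for $m$; clustering of zeros around the finitely many zeros of $f_\infty^{*}D$ with local multiplicity matching for $N$; and, for the higher-codimensional $Z$, replacing the local equality of orders by a one-sided domination via a Cartier divisor $D_{j}$ realizing the minimum, which gives only $\sum_{z\in V_{i}}\mathrm{ord}_{z}f_n^{*}Z\le\mathrm{ord}_{z_{i}}f_\infty^{*}Z$ and hence the factor $2$. The only cosmetic differences are that you invoke Hurwitz in local trivializations of $L$ where the paper cites Rouch\'e, and that for the Cartier case you integrate via dominated convergence in $t$ where the paper carries out an explicit $(1\pm\varepsilon)$ perturbation of the weight $\min\{\log r,\log r-\log|z|\}$; you should write the weight as this minimum rather than $\log(r/|z|)$, since the latter is only valid for $|z|\ge1$, but this does not affect the argument.
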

\begin{proof}
The statement for the characteristic function can be deduced directly by definition and the uniform convergency of $f_{n}$ on $\ol{\BD}_{r}$. We mainly consider the other two functions.

We first consider the proximity function.

Since $f_{\infty}(z)\notin D$ for all $z\in\partial\BD_{r}$, we can choose an open neighbourhood $U$ of\\ $f_{\infty}(\partial\BD_{r})$ whose closure $\ol{U}$ (in the Euclidean topology) is disjoint from $D$. Then the function $\log\left\|s_{D}(x)\right\|$ is a real-valued continuous function on $\ol{U}$. By the compactness of $\ol{U}$, it is uniformly continuous. That is, if we fix a distance function $d\colon \ol{U}\times\ol{U}\to\RR_{\geq0}$ on $\ol{U}$, then for any $\varepsilon>0$, there is a $\delta>0$ such that for all $x,y\in\ol{U}$ with $d(x,x')<\delta$, we have $|\log\left\|s_{D}(x)\right\|-\log\left\|s_{D}(x')\right\||<\varepsilon$.

As $f_{n}$ converges uniformly to $f_{\infty}$ on any compact subset of $\BD_{R_{\infty}}$, for any $\delta>0$, when $n$ large enough (depending on $\delta$), we have $f_{n}(y)\in U$ and $d(f_{n}(y),f_{\infty}(y))<\delta$ for all $y\in\partial\BD_{r}$. Thus if we choose $\delta$ as the one in the previous paragraph, we have $|\log\left\|s_{D}(f_{n}(y))\right\|-\log\left\|s_{D}(f_{\infty}(y))\right\||<\varepsilon$ for $n$ large enough (depending on $\delta$) and any $y\in\partial\BD_{r}$. Now by definition of the proximity function, we see that $|m(r,f_{n},D,\left\|\cdot\right\|)-m(r,f_{\infty},D,\left\|\cdot\right\|)|<\varepsilon$ for $n$ large enough. Since $\varepsilon>0$ is arbitrary, $m(r,f_{n},D,\left\|\cdot\right\|)$ tends to $m(r,f_{\infty},D,\left\|\cdot\right\|)$ as $n\to\infty$.

Next we prove the lemma for the counting function $N$. By definition we have
\begin{align*}
N(r,f_{\infty},D)&=\int_{1}^{r}(\sum\limits_{z\in\BD_{t}}{\rm ord}_{z}f^{*}D)\frac{{\rm d}t}{t}\\
&=\sum\limits_{z\in\BD_{1}}{\rm ord}_{z}f^{*}D\cdot\log r+\sum\limits_{1\leq|z|<r}{\rm ord}_{z}f^{*}D\cdot(\log r-\log|z|)\\
&=\sum\limits_{z\in\BD_{r}}{\rm ord}_{z}f^{*}D\cdot\min\{\log r,\log r-\log|z|\}.
\end{align*}
Fix an $\varepsilon>0$. For any $z\in\ol{\BD}_{r}$, choose an open neighbourhood $U_{z}$ of $z$ satisfying the following conditions:
\begin{enumerate}[label=(\arabic*)]
 \item\label{limit1}  For any $z\in\ol{\BD}_{r}$ and $z'\in U_{z}\backslash\{z\}$. $f_{\infty}(z')\notin D$.
 \item\label{limit2} If $f_{\infty}(z)\in D$ (which implies that $z\in\BD_{r}$ by our assumption), $|z|>1$ and $z'\in U_{z}$, then $|z'|>1$ and
 $$
 (1-\varepsilon)(\log r-\log|z|)\leq\log r-\log|z'|\leq (1+\varepsilon)(\log r-\log|z|)
 $$
 \item\label{limit3} $U_{z}\subseteq\BD_{r}$ if $z\in\BD_{r}$.
\end{enumerate}
Now $U_{z}(z\in\ol{\BD}_{r})$ covers the compact set $\ol{\BD}_{r}$, so it has a finite sub-cover. Explicitly, assume $\ol{\BD}_{r}=\cup_{j=1}^{m}U_{z_{j}}$, where $m$ is a positive integer and $z_{j}\in\ol{\BD}_{r}$.

By Rouch\'{e}'s theorem, for $n$ large enough, we have $\sum_{z\in U_{z_{j}}}{\rm ord}_{z}f_{n}^{*}D=\sum_{z\in U_{z_{j}}}{\rm ord}_{z}f_{\infty}^{*}D$ for all $1\leq j\leq m$. By assumption \ref{limit1}, we have $\sum_{z\in U_{z_{j}}}{\rm ord}_{z}f_{\infty}^{*}D={\rm ord}_{z_{j}}f_{\infty}^{*}D$. Thus by assumption \ref{limit2},
\begin{align*}
\sum\limits_{z\in U_{z_{j}}}{\rm ord}_{z}f_{n}^{*}D\cdot\min\{\log r,\log r-\log|z|\}&\leq (1+\varepsilon)\sum\limits_{z\in U_{z_{j}}}{\rm ord}_{z}f_{n}^{*}D\cdot\min\{\log r,\log r-\log|z_{j}|\}\\
&=(1+\varepsilon){\rm ord}_{z_{j}}f_{\infty}^{*}D\cdot\min\{\log r,\log r-\log|z_{j}|\}
\end{align*}
for $n$ large enough. Similarly, we have
$$
\sum\limits_{z\in U_{z_{j}}}{\rm ord}_{z}f_{n}^{*}D\cdot\min\{\log r,\log r-\log|z|\}\geq (1-\varepsilon){\rm ord}_{z_{j}}f_{\infty}^{*}D\cdot\min\{\log r,\log r-\log|z_{j}|\}
$$
for $n$ large enough.

By Rouch\'{e}'s theorem again, we see that for any $1\leq j<k\leq m$ and any $n$ large enough, $f_{n}(z)\notin D$ for $z\in U_{z_{j}}\cap U_{z_{k}}$. In other words, when $n$ large enough, for any $z\in\BD_{r}$ satisfying $f_{n}(z)\in D$, there is exactly one $1\leq j\leq m$ such that $z\in U_{z_{j}}$. Now we have
\begin{align*}
N(r,f_{n},D)&=\sum\limits_{z\in \BD_{r}}{\rm ord}_{z}f_{n}^{*}D\cdot\min\{\log r,\log r-\log|z|\}\\
&=\sum\limits_{j=1}^{m}\sum\limits_{z\in U_{z_{j}}}{\rm ord}_{z}f_{n}^{*}D\cdot\min\{\log r,\log r-\log|z|\}\\
&\leq\sum\limits_{j=1}^{m}(1+\varepsilon){\rm ord}_{z_{j}}f_{\infty}^{*}D\cdot\min\{\log r,\log r-\log|z_{j}|\}\\
&=(1+\varepsilon)\sum\limits_{z\in\BD_{r}}{\rm ord}_{z}f_{\infty}^{*}D\cdot\min\{\log r,\log r-\log|z|\}\\
&=(1+\varepsilon)N(r,f_{\infty},D).
\end{align*}
for $n$ large enough. Similarly we have
$$
N(r,f_{n},D)\geq (1-\varepsilon)N(r,f_{\infty},D)
$$
for $n$ large enough. As $\varepsilon>0$ is arbitrary, we conclude that $N(r,f_{n},D)$ tends to $N(r,f_{\infty},D)$ as $n\to\infty$.

Finally, if $D$ is replaced by an arbitrary closed subscheme $Z$ of $X$, all of the arguments above also work, except for the equality $\sum_{z\in U_{z_{j}}}{\rm ord}_{z}f_{n}^{*}D=\sum_{z\in U_{z_{j}}}{\rm ord}_{z}f_{\infty}^{*}D$ replaced by the inequality $\sum_{z\in U_{z_{j}}}{\rm ord}_{z}f_{n}^{*}Z\leq\sum_{z\in U_{z_{j}}}{\rm ord}_{z}f_{\infty}^{*}Z$. In fact, take $Z=D_{1}\cap\cdots\cap D_{l}$ for some effective Cartier divisors $D_{1},\dots,D_{l}$. Then by the argument above we see that $\sum_{z\in U_{z_{j}}}{\rm ord}_{z}f_{n}^{*}D_{i}=\sum_{z\in U_{z_{j}}}{\rm ord}_{z}f_{\infty}^{*}D_{i}$ for all $1\leq i\leq l$ and $n$ large enough. Therefore we have
\begin{align*}
\sum_{z\in U_{z_{j}}}{\rm ord}_{z}f_{n}^{*}Z&=\sum_{z\in U_{z_{j}}}\min\{{\rm ord}_{z}f_{n}^{*}D_{1},\dots,{\rm ord}_{z}f_{n}^{*}D_{l}\}\\
&\leq \sum_{z\in U_{z_{j}}}{\rm ord}_{z}f_{n}^{*}D_{i}\\
&=\sum_{z\in U_{z_{j}}}{\rm ord}_{z}f_{\infty}^{*}D_{i}={\rm ord}_{z_{j}}f_{\infty}^{*}D_{i}.
\end{align*}
for all $1\leq i\leq l$ and $n$ large enough. This means that
$$
\sum_{z\in U_{z_{j}}}{\rm ord}_{z}f_{n}^{*}Z\leq\min\{{\rm ord}_{z_{j}}f_{\infty}^{*}D_{1},\dots,{\rm ord}_{z_{j}}f_{\infty}^{*}D_{l}\}={\rm ord}_{z_{j}}f_{\infty}^{*}Z=\sum_{z\in U_{z_{j}}}{\rm ord}_{z}f_{\infty}^{*}Z.
$$
for $n$ large enough. Now imitate the previous argument (choose $0<\varepsilon<1$ to ensure that $1+\varepsilon<2$), we can deduce that $N(r,f_{n},Z)\leq 2N(r,f_{\infty},Z)$ for $n$ large enough.
\end{proof}
The next theorem is due to Yamanoi, which is crucial to our proof in \S\ref{5.4}. Part (1) of the theorem is called the Second Main Theorem (for abelian varieties).
\begin{theorem}\label{Nevanlinna}
Let $A$ be an abelian variety over $\CC$ and let $L$ be an ample line bundle on $A$. Let $f\colon \CC\to A$ be a holomorphic curve such that the image of $f$ in $A$ is Zariski dense.

\begin{enumerate}[label=\normalfont(\arabic*),leftmargin=*]
\item {\rm (Second Main Theorem, \cite[Theorem 3.1.1]{zbMATH02144170})} If $D$ is a reduced effective divisor on $A$, then
$$
T(r,f,D)\leq N^{(1)}(r,f,D)+\varepsilon T(r,f,L)\quad\quad \big|\big|_{\varepsilon}\quad \forall \varepsilon>0.
$$
Here $\big|\big|$ means that the estimate holds for $r>0$ outside some exceptional set of finite Lebesgue measure. $T(r,f,D), T(r,f,L)$ are with respect to any smooth Hermitian metrics on $\CO_{A}(D), L$ respectively. Similarly for \emph{(2)} below.

\item {\rm (\cite[Corollary 2.5.8]{zbMATH02144170})} If $Z\subseteq A$ is a closed subscheme whose support has codimension greater than 1, then
$$
N(r,f,Z)\leq\varepsilon T(r,f,L)\quad\quad \big|\big|_{\varepsilon}\quad \forall \varepsilon>0.
$$
\end{enumerate}
\end{theorem}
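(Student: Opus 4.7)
The plan is to handle parts (1) and (2) separately, since they rest on different technical inputs; as this is Yamanoi's theorem, I would essentially follow his strategy.

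For part (1), the \emph{Second Main Theorem}, the approach is through the \emph{logarithmic derivative lemma} adapted to abelian varieties. First I would reduce to the case where $D$ has mild singularities (say, simple normal crossings) by a birational modification $\widetilde{A}\to A$; since the allowed error is $\varepsilon T(r,f,L)$, the discrepancies in $T$, $m$ and $N^{(1)}$ can be absorbed. Next, using a basis of translation-invariant holomorphic $1$-forms on $A$ (equivalently, a basis of $\mathrm{Lie}(A)^{\vee}$), one constructs a Wronskian-type jet differential $W(f)$ that vanishes to the appropriate order along $D$. The First Main Theorem (Theorem \ref{fmt}) gives $T(r,f,D)=N(r,f,D)+m(r,f,D)+O(1)$, so the task reduces to the proximity bound
\[
m(r,f,D)\;\leq\;\bigl(N^{(1)}(r,f,D)-N(r,f,D)\bigr)+\varepsilon T(r,f,L)
\]
outside an $r$-set of finite Lebesgue measure. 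This ``truncation'' step is obtained by comparing $-\log\|s_D\circ f\|$ with $-\log|W(f)|$ and invoking the abelian logarithmic derivative lemma, which bounds the averaged integral of $\log|W(f)|$ over $\partial\BD_r$ by $\varepsilon T(r,f,L)$; the exceptional set arises from Borel's growth lemma applied to $T(r,f,L)$.

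For part (2), I would reduce to part (1) using the codimension assumption on $Z$. Cover $Z$ set-theoretically as an intersection of reduced effective divisors $D_1,\ldots,D_l$ in general position, so that pairwise intersections $D_i\cap D_j$ still have codimension $\geq 2$. The naive bound $N(r,f,Z)\leq N(r,f,D_i)$ via part (1) only yields $O(T(r,f,L))$, which is too weak. The sharper route exploits translation invariance of $A$: replace each $D_i$ by a translate $D_i+a_i$ with $a_i\in A(\CC)$ generic, and average over the Haar measure on $A$. For generic choices of $a_i$, the image $f(\CC)$ meets $\bigcap_i (D_i+a_i)$ only on a set of small counting contribution; integrating over the translates and applying part (1) to each $D_i+a_i$ yields $N(r,f,Z)\leq\varepsilon T(r,f,L)$ outside a finite-measure exceptional set. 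This is essentially a Crofton-type argument adapted to the Nevanlinna setting.

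The main obstacle is the logarithmic derivative lemma for abelian varieties itself: controlling the contribution of jet derivatives of $f$ uniformly in $r$ and producing the truncated counting function $N^{(1)}$ (rather than the full $N$) is the genuinely hard analytic input, and is where Yamanoi's work improves on the earlier Second Main Theorems of Noguchi--Winkelmann and others. The Borel-type averaging required for part (2) is by comparison routine, but still delicate, as one must ensure that the exceptional set in $r$ remains of finite Lebesgue measure after the Haar integration over the translation parameter.
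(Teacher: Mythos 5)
The paper does not prove this theorem at all; both parts are cited verbatim from Yamanoi's work (\cite[Theorem 3.1.1]{Yam1} and \cite[Corollary 2.5.8]{Yam1}), and the paper simply remarks that the special case actually used --- linear entire curves --- has a simplified treatment in \cite[pp.\ 255--257]{Yam3}. So there is no proof in the paper to compare against; what you have written is a reconstruction sketch of Yamanoi's argument, and it should be evaluated on its own terms.

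For part (1) your sketch is in the right spirit: the logarithmic derivative lemma on abelian varieties and jet/Wronskian constructions are genuine ingredients, and you correctly identify that getting the truncation down to level $1$ (rather than level $\dim A$, which a naive Wronskian argument produces) is the real difficulty; you do not supply it, which is acceptable for a sketch but leaves the core of the theorem unproved. For part (2), however, the argument as written has a gap. You must bound $N(r,f,Z)$ for the \emph{fixed} subscheme $Z$. Writing $Z=D_1\cap\cdots\cap D_l$ scheme-theoretically requires $Z\subseteq D_i$ for every $i$. Once you replace the $D_i$ by generic translates $D_i+a_i$, the subscheme $\bigcap_i(D_i+a_i)$ no longer contains $Z$ and in fact typically misses $Z$ entirely, so $N(r,f,\bigcap_i(D_i+a_i))$ carries no information about $N(r,f,Z)$. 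Averaging over $a_i$ gives a small bound for the average of the wrong counting function. You would need either to average over a family of divisors all \emph{passing through} $Z$, or to connect the translated and untranslated counting functions by a concrete inequality, neither of which is supplied. Note also that the elementary pointwise bound $\min\{\operatorname{ord}_z f^*D_1,\dots,\operatorname{ord}_z f^*D_l\}\le\operatorname{ord}_z f^*D_i$ only gives $N(r,f,Z)=O(T(r,f,L))$, so some non-trivial refinement is essential and is precisely what is missing.
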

Actually, we will use Theorem \ref{Nevanlinna} only for linear entire curves. In this case there is a simplified proof in \cite[pp. 255-257]{zbMATH06448080}.
\subsection{A key lemma}\label{5.2}
In this subsection we prove the following key lemma which will be used in \S\ref{5.5}.
\begin{lemma}[Lemma \ref{key1}]\label{key}
Let $S$ be a variety over $\CC$. Let $\pi\colon X\to S$ be a morphism of finite type between separated $\CC$-schemes. Fix a point $s_{\infty}\in S(\CC)$ and a sequence of points $s_{n}\in S(\CC)$ tending to $s_{\infty}$ in the Euclidean topology. Then there is a Zariski open dense subset $X_{s_{\infty}}^{\circ}\subset X_{s_{\infty}}$ such that for every point $x_{\infty}\in X_{s_{\infty}}^{\circ}(\CC)$ and every sequence of points $x_{n}\in X_{s_{n}}$ tending to $x_{\infty}$, if there exists a sequence of tangent vectors $v_{n}\in T_{x_{n}}(X_{s_{n}, {\rm red}})$ tending to a tangent vector $v_{\infty}\in T_{x_{\infty}}(X_{s_{\infty}})$ under the Euclidean topology of the tangent bundle $TX$, then we have $v_{\infty}\in T_{x_{\infty}}(X_{s_{\infty}, {\rm red}})$.
\end{lemma}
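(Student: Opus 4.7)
The plan is to recast the statement as a question about the Zariski closure, in the algebraic tangent scheme $TX$, of the constructible locus of ``tangent vectors to reduced fibers'', and then to exploit the fact that for a constructible subset of a complex algebraic variety the Zariski closure coincides with the Euclidean closure. First I would carry out the standard reductions: replace $X$ by $X_{{\rm red}}$ (which preserves the reduced fibers and hence the entire statement), decompose into irreducible components, and shrink $X_{s_\infty}^\circ$ so as to avoid the pairwise intersections of irreducible components of $X$ and of $X_{s_\infty, {\rm red}}$ and the singular locus of $X_{s_\infty, {\rm red}}$; passing to a subsequence of $\{x_n\}$ lying in a single irreducible component then reduces the problem to the case where $X$ is an integral variety. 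Replacing $S$ by $\overline{\pi(X)}$ makes $\pi$ dominant.

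Next, introduce the subset
$$ W \colonequals \{(x, v) \in TX : v \in T_x(X_{\pi(x), {\rm red}})\} \subseteq TX. $$
By iterating generic flatness and, in characteristic $0$, generic reducedness of fibers, one produces a finite locally closed stratification of $S$ on each stratum of which the reduced fibers of $X$ form a flat family of geometrically reduced schemes with a well-defined relative tangent bundle. Using this stratification one checks that $W$ is constructible in $TX$. By the classical theorem that the Euclidean closure of a constructible subset of a complex algebraic variety equals its Zariski closure, any Euclidean limit $(x_\infty, v_\infty)$ of a sequence $(x_n, v_n) \in W$ must lie in the Zariski closure $\bar{W}$ of $W$ in $TX$. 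It therefore suffices to find a Zariski open dense subset $X_{s_\infty}^\circ \subseteq X_{s_\infty}$ such that the fibre of $\bar{W} \to X$ over every $x_\infty \in X_{s_\infty}^\circ$ is contained in $T_{x_\infty}(X_{s_\infty, {\rm red}})$.

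To produce such an $X_{s_\infty}^\circ$, let $U \subseteq S$ be the open dense locus over which $\pi$ is smooth (by generic smoothness in characteristic $0$). Over $\pi^{-1}(U)$, $W$ coincides with the relative tangent bundle of $\pi$, which is an irreducible algebraic vector bundle of rank $r \colonequals \dim X - \dim S$. Let $\bar{W}_0$ be its Zariski closure in $TX$; this is an irreducible subvariety of dimension $\dim X + r$. By generic flatness of $\bar{W}_0 \to X$ combined with generic reducedness and irreducibility of fibres (valid because the generic fibre is an affine space $\BA^r$), there is an open dense $V \subseteq X$ over which the fibres of $\bar{W}_0 \to X$ are irreducible subvarieties of $T_xX$ of dimension exactly $r$. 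At a point $x_\infty \in X_{s_\infty} \cap V$ which is a smooth point of $X_{s_\infty, {\rm red}}$ and satisfies $\dim_{x_\infty} X_{s_\infty, {\rm red}} = r$ (i.e.\ no fibre-dimension jump at $x_\infty$), the linear subspace $T_{x_\infty}(X_{s_\infty, {\rm red}})$ sits inside $\bar{W}_0|_{x_\infty}$, with both being irreducible of the same dimension $r$; this forces the two to coincide, which is exactly what is needed.

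The main obstacle is to verify that the locus thus identified is actually open and dense in $X_{s_\infty}$, i.e.\ that the subset we have to remove is a proper closed subset of $X_{s_\infty}$. Specifically, one must rule out the possibility that the entire $X_{s_\infty}$ lies in the non-flat locus of $\bar{W}_0 \to X$, in the fibre-dimension-jump locus of $\pi$, or in the image of some additional irreducible component of $\bar{W}$ beyond $\bar{W}_0$. To handle these degenerate situations, I plan to use Noetherian induction on the pair $(\dim S, \dim X)$: if $\dim X_{s_\infty} > r$, one restricts to the proper closed subscheme $S' \colonequals \{s \in S : \dim X_s \geq \dim X_{s_\infty}\}$ through $s_\infty$ and applies the lemma to $X \times_S S' \to S'$, which has strictly smaller generic relative dimension. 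The additional components of $\bar{W}$ lie over proper closed subsets of $X$; intersecting them with $X_{s_\infty}$ yields a proper closed subset that is removed from $X_{s_\infty}^\circ$. A further subtlety is that, after these restrictions, one must pass to a subsequence of $\{x_n\}$ whose base points $s_n$ lie in the smaller base, which is arranged by a flattening stratification argument on $S$; since the lemma only asserts the existence of a single $X_{s_\infty}^\circ$, this causes no loss of generality.
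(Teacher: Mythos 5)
Your strategy---encode the relative tangent data as a constructible subset $W\subseteq TX$, invoke the equality of Euclidean and Zariski closures for constructible sets, and then compute the fibre of $\bar W$ over $x_\infty$---is genuinely different from the paper's, which instead flattens the projection $Y\to S$ of the tangent locus by Raynaud--Gruson on the base $S$, uses the open-map property of flat analytic morphisms to see that the Euclidean limit of $\{Y_{s_n}\}$ depends only on a lift $s'_\infty$ of $s_\infty$ in the modified base, reduces to $\dim S=1$ by passing to a curve through $s'_\infty$, and finally arranges generically reduced fibres by a finite base change. Your route would be attractive if it worked, but it has genuine gaps, all concentrated precisely in the case where $s_\infty$ lies in the degeneracy locus of $\pi$, which is the only case where the lemma has content.

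The crucial unproved assertion is that $T_{x_\infty}(X_{s_\infty,\mathrm{red}})\subseteq \bar W_0|_{x_\infty}$ at the good points $x_\infty$, where $\bar W_0$ is the closure of the relative tangent bundle over the smooth locus of $\pi$. What is free is only $T(X_{s_\infty,\mathrm{red}})\subseteq W\subseteq\bar W$; but $T(X_{s_\infty,\mathrm{red}})$ has dimension $2r$ while $\bar W_0$ has dimension $\dim S+2r>2r$, so there is no reason for it to lie in the particular component $\bar W_0$ rather than constitute (part of) a separate component of $\bar W$ sitting over $X_{s_\infty}$. That the tangent directions of the reduced special fibre are limits of tangent directions to general reduced fibres is a claim of essentially the same depth as the lemma itself and cannot be asserted without proof; without it the equality-of-irreducible-$r$-dimensional-varieties step collapses. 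Relatedly, your dismissal of the extra components of $\bar W$ does not hold up: you say they lie over proper closed subsets of $X$, but those subsets have the form $\pi^{-1}(C)$ for proper closed $C\subseteq S$, and when $s_\infty\in C$ (the interesting case) the entire fibre $X_{s_\infty}$ lies inside $\pi^{-1}(C)$, so intersecting with $X_{s_\infty}$ is not proper. Finally, the proposed Noetherian induction on the fibre-dimension-jump locus $S'=\{s:\dim X_s\ge\dim X_{s_\infty}\}$ cannot be run as stated: the given sequence $\{s_n\}$ will in general lie entirely outside $S'$ (the generic fibre has dimension $r<\dim X_{s_\infty}$), so there is no subsequence in the smaller base to induct on. The paper's flattening map $S'\to S$ avoids this problem precisely because it is a \emph{birational surjection}, so $\{s_n\}$ does lift.
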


\begin{example}
Take $S=\BA^{2}$ and $X=V(xu+yv+u^{2}v^{2})\subseteq \BA^{4}$. The morphism $\pi\colon X\to S$ is given by $(x,y,u,v)\mapsto (x,y)$. Then the fibre of $\pi$ at the point $s_{\infty}=(0,0)$ is non-reduced and the tangent space $T_{x_{\infty}}(X_{s_{\infty}})$ for every point $x_{\infty}\in X_{s_{\infty}}$ has dimension 2. On the other hand, $X_{s_{\infty},{\rm red}}$ is isomorphic to ${\rm Spec}\ \CC[u,v]/(uv)$, so the tangent space $T_{x_{\infty}}(X_{s_{\infty}, {\rm red}})$ for any point $x_{\infty}\in X_{s_{\infty}}\backslash\{(0,0,0,0)\}$ has dimension 1, which is the same as the dimension of the fibre.

In general, for a point $(x,y,u,v)\in X(\CC)$, we have
$$
T_{(x,y,u,v)}(X_{(x,y)})\simeq\{(u^{(1)},v^{(1)})\in\BA^{2}\big| (x+2uv^{2})\cdot u^{(1)}+(y+2u^{2}v)\cdot v^{(1)}=0\}.
$$
 It is not hard to see that when $(x,y)\neq (0,0)$, the fibre $X_{(x,y)}$ is reduced. Now fix a sequence of points $s_{n}=(x_{n}, y_{n})\to s_{\infty}=(0,0)$. For convenience we assume that $x_{n}y_{n}\neq0$ for all $n$. Then for any point on $X_{s_{\infty}}$ of the form $(0,0,0, v_{\infty})$ ($v_{\infty}\neq 0$), suppose there is a sequence of points $(x_{n}, y_{n}, u_{n}, v_{n})$ tending to $(0,0,0,v_{\infty})$ and tangent vectors $(u_{n}^{(1)},v_{n}^{(1)})\in T_{(x_{n},y_{n},u_{n},v_{n})}(X_{s_{n}})$ tending to $(u_{\infty}^{(1)},v_{\infty}^{(1)})\in T_{(0,0,0,v_{\infty})}(X_{s_{\infty}})$, then we have

 \begin{align}\label{lizi}
 (x_{n}+2u_{n}v_{n}^{2})\cdot u_{n}^{(1)}+(y_{n}+2u_{n}^{2}v_{n})\cdot v_{n}^{(1)}=0,\ n\geq1
 \end{align}
 Moreover, since $v_{n}\to v_{\infty}\neq0$ and
 \begin{align}\label{lizi1}
 x_{n}u_{n}+y_{n}v_{n}+u_{n}^{2}v_{n}^{2}=0,\ n\geq1
 \end{align}
 Divide both sides of the equation by $u_{n}$ and then take $n\to\infty$, we see that\\
$\lim_{n\to\infty}(y_{n}/u_{n})=0$. This means that $y_{n}+2u_{n}^{2}v_{n}=o(2u_{n}v_{n}^{2})$ as $n\to\infty$. Here for two sequences $\{a_{n}\}_{n\geq1}, \{b_{n}\}_{n\geq1}$ of complex numbers, we say $a_{n}=o(b_{n})$ as $n\to\infty$ if $\lim_{n\to\infty}(a_{n}/b_{n})=0$.

 After taking a subsequence we may assume that the limit $A=\lim_{n\to\infty}(x_{n}/2u_{n}v_{n}^{2})$ exists. If $A\neq-1$, divide both sides of the equation (\ref{lizi}) by $2u_{n}v_{n}^{2}$ and then take $n\to\infty$, we get that
 $$
 (A+1)u_{\infty}^{(1)}=0
 $$
 Therefore $u_{\infty}^{(1)}=0$. This is exactly the equation of the tangent space $T_{(0,0,0,v_{\infty})}(X_{s_{\infty},{\rm red}})$.

 If $A=-1$, then we have $x_{n}=-2u_{n}v_{n}^{2}+o(u_{n})$ as $n\to\infty$. Substitute this to the equation (\ref{lizi1}), we see that $y_{n}=u_{n}^{2}v_{n}+o(u_{n}^{2})$. Thus we get that
 $$
 \lim\limits_{n\to\infty}\frac{x_{n}^{2}}{y_{n}}=\lim\limits_{n\to\infty}\frac{4u_{n}^{2}v_{n}^{4}+o(u_{n}^{2})}{u_{n}^{2}v_{n}+o(u_{n}^{2})}=4v_{\infty}^{3}.
 $$
 Note that we have fixed $(x_{n},y_{n})$. Therefore, as long as $v_{\infty}$ doesn't satisfy the equation above, there must be $A\neq-1$ and $(u_{\infty}^{(1)},v_{\infty}^{(1)})\in T_{(0,0,0,v_{\infty})}(X_{s_{\infty},{\rm red}})$.
\end{example}
\medskip

\noindent\textbf{Proof of Lemma \ref{key}}\medskip

\noindent Now we begin to prove Lemma \ref{key}. If $X_{s_{n}}$ is empty for all but finitely many $n\geq1$, the lemma is trivial. Thus after taking a subsequence of $\{s_{n}\}_{n\geq1}$, we assume $X_{s_{n}}$ is nonempty for all $n\geq1$. Replacing $S$ by an irreducible component of Zariski closure of $\cup_{n\geq1}\{s_{n}\}$ in $S$ if necessary, we may assume that the points $\{s_{n}\}_{n\geq1}$ are Zariski dense in $S$. This implies that $\pi\colon X\to S$ is dominant. Replacing $X$ by $X_{\rm red}$, we may further assume that $X$ is reduced. Now by the generic reducedness (cf. \cite[\href{https://stacks.math.columbia.edu/tag/054Z}{Lemma 054Z}, \href{https://stacks.math.columbia.edu/tag/020I}{Lemma 020I}, \href{https://stacks.math.columbia.edu/tag/0578}{Lemma 0578}]{stacks-project}), there exists a Zariski open dense subset $S_{0}$ of $S$ such that $X_{S_{0}}\to S_{0}$ has reduced fibres. Thus after taking a subsequence of $\{s_{n}\}_{n\geq1}$, we may assume that $s_{n}\in S_{0}$ for all $n\geq1$. In this case $X_{s_{n}}$ is reduced for all $n\geq1$. \medskip

\noindent\textbf{Step 1.} The first step is to reduce to the case when $\dim S=1$. Let $TX$ be the tangent bundle of $X$ and $p\colon TX\to X$ be the natural projection. Let $TS$ be the tangent bundle of $S$ with a zero section $0_{S}\colon S\to TS$. Then the morphism $\pi\colon X\to S$ induces a morphism of the tangent bundles ${\rm d}\pi\colon TX\to TS$. Let $Y$ be the inverse image of $0_{S}(S)\subseteq S$ under ${\rm d}\pi$. In other words,
$$
Y(\CC)\colonequals \{(x,v)\in TX\big| x\in X(\CC), v\in T_{x}(X_{\pi(x)})\}.
$$
By composing $p\big|_{Y}\colon Y\to X$ and $\pi\colon X\to S$, we get a (dominant) morphism $\pi\circ(p\big|_{Y})\colon Y\to S$. By the Raynaud--Gruson flattening theorem in \cite[Th\'{e}or\`{e}me 5.2.2]{zbMATH03360302}, there exists a proper birational morphism $g\colon S'\to S$ such that the strict transform $Y'$ of $Y$ under $g$ is flat over $S'$. Let $g_{Y}\colon Y'\to Y$ be the natural morphism.

Since $\{s_{n}\}$ is Zariski dense in $S$ and $g$ is birational, after taking a subsequence, we may assume that all $s_{n}$ lie in an open subset $U\subseteq S$ such that $g^{-1}(U)\to U$ and $Y'_{g^{-1}(U)}\to Y_{U}$ are isomorphisms. We set $s'_{n}=g^{-1}(s_{n})(n\geq1)\in S'$. Since $g$ is proper, the inverse image of any compact subset of $S(\CC)$ in $S'(\CC)$ is compact. This means that after taking a subsequence, we may assume that $s'_{n}$ converging to a point $s'_{\infty}\in S'(\CC)$ in the Euclidean topology.  In particular, we have $g(s'_{\infty})=s_{\infty}$.

Now for any point $y'_{\infty}\in Y'_{s'_{\infty}}(\CC)$ and any Euclidean open neighbourhood $U_{y'_{\infty}}$ of $y'_{\infty}$ in $Y'(\CC)$, by \cite[Theorem 2.12]{zbMATH03522318}, the flatness of $Y'\to S'$ implies that the image of $U_{y'_{\infty}}$ in $S'(\CC)$ is open. In particular, it contains an (Euclidean) open neighbourhood of $s'_{\infty}$ in $S'(\CC)$. This means that $y'_{\infty}$ is a limit point of the fibres $Y'_{s'_{n}}(\CC)(n\geq 1)$. In other words, there exists a sequence of points $y'_{n}\in Y'_{s'_{n}}(\CC)$ tending to $y'_{\infty}$ under Euclidean topology. Since $y'_{\infty}$ is an arbitrary point on $Y'_{s_{\infty}}(\CC)$, we have shown that every point of $Y'_{s'_{\infty}}(\CC)$ is a limit point of $Y'_{s'_{n}}(\CC)$. In short, we conclude that  \emph{$Y'_{s'_{\infty}}$ is exactly the `limit' of $\{Y'_{s'_{n}}\}_{n\geq1}$}.

For a point $y_{\infty}\in Y_{s_{\infty}}(\CC)$, first assume $y_{\infty}$ is of the form $\lim_{n\to\infty}y_{n}$ where $y_{n}\in Y_{s_{n}}(\CC)$. Then since for all $n\geq1$, $Y'_{s'_{n}}$ is isomorphic to $Y_{s_{n}}$ under $g_{Y}\colon Y'\to Y$, there is a unique $y'_{n}\in Y'_{s'_{n}}(\CC)$ corresponds to $y_{n}$ under $g_{Y}$. Using again the properness of $g_{Y}\colon Y'\to Y$, $\{y'_{n}\}_{n\geq1}$ has a limit point $y'_{\infty}\in Y'_{s'_{\infty}}(\CC)$. This means that $y_{\infty}=\lim_{n\to\infty}g_{Y}(y'_{n})=g_{Y}(\lim_{n\to\infty}y'_{n})=g_{Y}(y'_{\infty})$. In particular, $y_{\infty}$ lies in the image of $Y'_{s'_{\infty}}(\CC)$.

Conversely, assume that $y_{\infty}=g_{Y}(y'_{\infty})$ for some point $y'_{\infty}\in Y'_{s'_{\infty}}(\CC)$. Then by the argument above, $y'_{\infty}$ is of the form $\lim_{n\to\infty}y'_{n}$ for some $y'_{n}\in Y'_{s'_{n}}(\CC)$. Denote $y_{n}=g_{Y}(y'_{n})\in Y_{s_{n}}(\CC)$. We then have $y_{\infty}=g_{Y}(y'_{\infty})=g_{Y}(\lim_{n\to\infty}y'_{n})=\lim_{n\to\infty}g_{Y}(y'_{n})=\lim_{n\to\infty}y_{n}$. Combine this with the previous paragraph, we conclude that \emph{$g_{Y}(Y'_{s'_{\infty}})$ is exactly the limit of $\{Y_{s_{n}}\}_{n\geq1}$}.

In particular, the limit of $\{Y_{s_{n}}\}_{n\geq1}$ depends only on the limit point $s'_{\infty}=\lim_{n\to\infty}g^{-1}(s_{n})$! Now choose any curve $C'\subseteq S'$ passing through $s'_{\infty}$ which does not lie in the exceptional locus of $g\colon S'\to S$. Denote $C=g(C')$, then $C$ is a curve in $S$ passing through $s_{\infty}$. For any sequence of points $t_{n}\in S(\CC)$ with $t'_{n}=g^{-1}(t_{n})$ tending to $s'_{\infty}$, the argument above shows that \emph{$g_{Y}(Y'_{s'_{\infty}})$ is exactly the limit of $\{Y_{t_{n}}\}_{n\geq1}$}. Therefore, the limit of $\{Y_{s_{n}}\}_{n\geq1}$ and the limit of $\{Y_{t_{n}}\}_{n\geq1}$ coincide. Restrict $\pi\colon X\to S$ to $\pi_{C}\colon X_{C}\to C$, we then reduce the lemma to the case when $\dim S=1$.\medskip

\noindent\textbf{Step 2.} Now we assume $\dim S=1$. The next step is to reduce to the case when $X,S$ are normal.

In fact, Let $\widetilde{X},\widetilde{S}$ be the normalization of $X, S$ and let $\widetilde\pi\colon \widetilde{X}\to\widetilde{S}$ be the morphism induced from $\pi\colon X\to S$. Let $Z'\subseteq\widetilde{X}$ be the non-isomorphic locus of $p\colon \widetilde{X}\to X$. Denote by $Z\subseteq p(Z') $ the union of the irreducible components of $p(Z')$ which does not contained in the fibre $X_{s_{\infty}}$. Take $U_{1}$ to be the complement of $Z$ in $X_{s_{\infty}}$. Then since $\dim S=1$, $U_{1}$ is Zariski dense in $X_{s_{\infty}}$. Since $\{s_{n}\}$ is dense in $S$, we may assume $s_{n}$ is a smooth point in $S$ for all $n$. Take $s'_{n}$ to be the (unique) inverse image of $s_{n}$ in $\widetilde{S}$. After taking a subsequence, we assume that $s'_{n}$ converges to a point $s'_{\infty}\in \widetilde{S}$ in the Euclidean topology.

Now suppose that the lemma holds for $\widetilde\pi\colon \widetilde{X}\to\widetilde{S}$ and sequence $\{s_{n}'\}_{n\geq1}\subseteq \widetilde{S}(\CC)$. This means that there exists a Zariski open dense  subset $\widetilde{X}^{\circ}_{s'_{\infty}}\subseteq \widetilde{X}_{s'_{\infty}}$ satisfying the conditions in the lemma. Shrinking $\widetilde{X}^{\circ}_{s'_{\infty}}$ if necessary, we assume $p^{-1}(p(\widetilde{X}^{\circ}_{s'_{\infty}}))=\widetilde{X}^{\circ}_{s'_{\infty}}$. Then $p(\widetilde{X}^{\circ}_{s'_{\infty}})$ contains a Zariski dense open subset $U_{2}$ of $X_{s_{\infty}}$. Take $X^{\circ}_{s_{\infty}}=U_{1}\cap U_{2}$. Then $X^{\circ}_{s_{\infty}}$ is Zariski dense and open in $X_{s_{\infty}}$. For any point $x_{\infty}\in X^{\circ}_{s_{\infty}}(\CC)$, and any sequence $x_{n}\in X_{s_{n}}(\CC)$ tending to $x_{\infty}$, by the definition of $U_{1}$ we see that $x_{n}\notin p(Z)$ for $n$ sufficiently large. Thus for large $n$, we take $x'_{n}\in X'_{s'_{n}}(\CC)$ to be the (unique) inverse image of $x_{n}$ in $X'$. Note that $p\colon \widetilde{X}\to X$ is an isomorphism outside $Z'$, so we have an isomorphism ${\rm d}p\colon T_{x'_{n}}(\widetilde{X}_{s'_{n}})\to T_{x_{n}}(X_{s_{n}})$. For any sequence of tangent vectors $v_{n}\in T_{x_{n}}(X_{s_{n}})$ tending to $v_{\infty}\in T_{x_{\infty}}(X_{s_{\infty}})$, let $v'_{n}\in T_{x'_{n}}(\widetilde{X}_{s'_{n}})$ be the (unique) inverse image of $v_{n}$ under ${\rm d}p$. We may assume that $v_{\infty}\neq0$, otherwise the lemma is trivial. Then we can assume $v_{n}\neq0$ for all $n\geq1$. Let $ \ol{v}_{n},\ol{v}'_{n}$ be the image of $v_{n},v'_{n}$ in $\BP T_{x_{n}}X,\BP T_{x'_{n}}\widetilde{X}$ respectively. As $\BP T\widetilde{X}$ is compact, we may assume that $\{(x'_{n},\ol{v}'_{n})\}_{n\geq1}$ tending to an element $(x'_{\infty},\ol{v}'_{\infty})\in \BP T\widetilde{X}$. Choose some real numbers $a_{n}$ such that $(x'_{n},a_{n}v'_{n})$ has a subsequence tending to an element $(x'_{\infty},v'_{\infty})\in T\widetilde{X}$ with $v'_{\infty}\neq0$. This implies that $(x_{n},a_{n}v_{n})$ has a subsequence tending to $(x_{\infty},({\rm d}p)(v'_{\infty}))\in TX$. Therefore $\ol{v}_{n}$ tends to the image of $\ol{v}'_{\infty}$ in $\BP T_{x_{\infty}}X$, which means that $v_{\infty}$ and $({\rm d}p)(v'_{\infty})$ are proportional. Write $v_{\infty}=c\cdot ({\rm d}p)(v'_{\infty})$ for some $c\in\RR$.

  By our assumption, $x'_{\infty}\in \widetilde{X}^{\circ}_{s'_{\infty}}$ and therefore $v'_{\infty}\in T_{x'_{\infty}}(\widetilde{X}_{s'_{\infty},{\rm red}})$. Thus $v_{\infty}=c\cdot({\rm d}p)(v'_{\infty})\in T_{x_{\infty}}(X_{s_{\infty},{\rm red}})$. This implies that the lemma also holds for $\pi\colon X\to S$ and sequence $\{s_{n}\}_{n\geq1}$. In conclusion, we can assume $X, S$ are normal.\medskip

\noindent\textbf{Step 3.}
Now we assume $\dim S=1$ and $X, S$ are normal. In particular $S$ is regular. We also assume that $X$ is irreducible (hence integral). Now since $\pi\colon X\to S$ is dominant, $\pi$ is flat. By \cite[\href{https://stacks.math.columbia.edu/tag/0BRQ}{Lemma 0BRQ}]{stacks-project}, there exists a finite surjective morphism $T\to S$ such that in the diagram

\begin{displaymath}
  \xymatrix{Y \ar[r]_{\nu} \ar[dr]_{g} & X\times_S T \ar[r] \ar[d] & X \ar[d]_{\pi}\\  \ar@{}[r]|{\;}& T \ar[r] & S }
\end{displaymath}
the morphism $g$ is smooth at all generic points of fibres. Here $\nu$ is the normalization map. In particular, each fibre of $g$ is generically reduced.

Take $t_{n}\in T(\CC)$ whose image in $S$ is $s_{n}$. Since $T\to S$ is finite, after taking a subsequence of $\{t_{n}\}_{n\geq1}$ we may assume that $t_{n}$ tends to a point $t_{\infty}$ in the Euclidean topology. In particular $t_{\infty}$ maps to $s_{\infty}$ under $T\to S$. Let $Y^{\circ}_{t_{\infty}}\subseteq Y_{t_{\infty}}$ be a Zariski open dense subset of $Y_{t_{\infty}}$ which is reduced. Then for any point $y\in Y_{t_{\infty}}^{\circ}(\CC)$, we have $T_{y}( Y_{t_{\infty},{\rm red}})=T_{y}(Y_{t_{\infty}})$. This implies that the lemma holds for $g\colon Y\to T$ and sequence $\{t_{n}\}_{n\geq1}$. By the argument in \textbf{Step 2}, we see that the lemma also holds for $X\times_{S}T\to T$ and sequence $\{t_{n}\}_{n\geq1}$. Denote $X'=X\times_{S} T$ and take $X'^{\circ}_{t_{\infty}}\subseteq X'_{t_{\infty}}$ to be a Zariski open dense subset of $X'_{t_{\infty}}$ satisfying the lemma. Let $X_{s_{\infty}}^{\circ}$ be the image of $X'^{\circ}_{t_{\infty}}$ in $X_{s_{\infty}}$ under the natural isomorphism $X'_{t_{\infty}}\to X_{s_{\infty}}$.

Now for any sequence of vectors $\{(x_{n},v_{n})\in TX_{s_{n}}\}_{n\geq1}$ tending to a tangent vector $(x_{\infty},v_{\infty})\in TX_{s_{\infty}}$, there is a unique sequence of vectors $\{(x_{n}',v_{n}')\in TX'_{t_{n}}\}_{n\geq1}$ corresponding to $\{(x_{n},v_{n})\in TX_{s_{n}}\}_{n\geq1}$ under the natural identification $X'_{t_{n}}\simeq X_{s_{n}} (n\geq1)$. We may assume that $v_{\infty}\neq0$, otherwise the lemma is trivial. Then we can assume that $v_{n}\neq0$ for all $n\geq1$. Let $\ol{v}_{n},\ol{v}'_{n}$ be the image of $v_{n},v'_{n}$ in $\BP T_{x'_{n}}X',\BP T_{x_{n}}X$, respectively. Since $\BP TX'$ is compact, after taking a subsequence we can assume that $(x_{n}',\ol{v}'_{n})$ tending to an element $(x'_{\infty},\ol{v}'_{\infty})\in \BP T_{t_{\infty}}(X'_{t_{\infty}})$. This element maps to $(x_{\infty},\ol{v}_{\infty})$ under the tangent map. Choose some suitable real numbers $a_{n}$ such that $\{(x'_{n},a_{n}v'_{n})\}_{n\geq1}$ has a subsequence tending to an element $(x'_{\infty},v'_{\infty})\in TX'$ with $v'_{\infty}\neq0$. This implies that $(x_{n},a_{n}v_{n})$ has a subsequence tending to the image of $(x'_{\infty},v'_{\infty})$ in $TX$. Therefore $\ol{v}_{n}$ tends to the image of $\ol{v}'_{\infty}$ in $\BP T_{x_{\infty}}X$, which means that $v_{\infty}$ and the image of $v'_{\infty}$ in $T_{x_{\infty}}X$ are proportional.

 Thus if $x_{\infty}\in X^{\circ}_{s_{\infty}}$, we have $x'_{\infty}\in X'^{\circ}_{t_{\infty}}$ and therefore $v_{\infty}'\in T_{x_{\infty}'}(X'_{t_{\infty},{\rm red}})$ by the choice of $X'^{\circ}_{t_{\infty}}$. Hence $v_{\infty}\in {\rm Im}(T_{x'_{\infty}}(X'_{t_{\infty},{\rm red}})\to T_{x_{\infty}}(X_{s_{\infty},{\rm red}}))\subseteq T_{x_{\infty}}(X_{s_{\infty},{\rm red}})$.  This finishes the proof of the lemma. \qed

\begin{remark}\label{zilie}
From the Step 1 of the proof, we actually get the following fact: There is a subsequence $\{s_{n_{k}}\}_{k\geq1}$ of $\{s_{n}\}_{n\geq1}$ such that for any limit point $(x_{\infty},v_{\infty})$ of $\{T(X_{s_{n_{k}},{\rm red}})\}_{n\geq1}$ in $T(X_{x_{\infty}})$, $(x_{\infty},v_{\infty})$ is of the form

$$
(x_{\infty},v_{\infty})=\lim\limits_{k\to\infty}(x_{n_{k}},v_{n_{k}}),\ {\rm where}\  (x_{n_{k}},v_{n_{k}})\in T(X_{s_{n_{k}},{\rm red}}).
$$
In other words, in the equation expressing $(x_{\infty},v_{\infty})$ as a limit, we do not need to take a subsequence of $\{s_{n_{k}}\}_{k\geq1}$ anymore.

In fact, as in the first sentence in the proof, we assume that $X$ is reduced and $\{s_{n}\}_{n\geq}$ is Zariski dense in $S$.  Let $g\colon S'\to S$ be the same birational morphism as in Step 1. Let $U\subseteq S$ be the Zariski open dense subset of $S$ such that $g^{-1}(U)\to U$ is an isomorphism. Take $\{s_{n_{k}}\}_{k\geq1}$ to be a subsequence of $\{s_{n}\}_{n\geq1}$ such that $s_{n_{k}}\in U$ for all $k\geq1$ and the inverse image $s'_{n_{k}}$ of $s_{n_{k}}$ in $S'$ tends to a point $s'_{\infty}$ when $k\to\infty$. This is what the second paragraph of Step 1 did. By the argument in Step 1, we see that this subsequence $\{s_{n_{k}}\}_{k\geq1}$ satisfies the condition above.
 \end{remark}
\subsection{Step 1: algebraic-geometric reductions}\label{5.3}
Before proving Proposition \ref{generic1}, we first prove a simple lemma concerning the positivity of divisors on abelian varieties.

Recall that for a generically finite proper map $f\colon X\to Y$ between $K$-varieties, there is a push-forward homomorphism $f_{*}\colon {\rm Div}(X)\to {\rm Div}(Y)$ between the groups of Weil divisors. More precisely, for any prime Weil divisor $D$ of $X$, the image $D'=f(D)$ is a closed subvariety of $Y$. If $\dim D'<\dim D$, set $f_{*}(D)=0$. If $\dim D'=\dim D$, then the function field $K(D)$ of $D$ is a finite extension of the function field $K(D')$ of $D'$. Set $f_{*}(D)=[K(D):K(D')]\cdot D'$. This extends linearly to a homomorphism $f_{*}\colon {\rm Div}(X)\to {\rm Div}(Y)$. By \cite[Theorem 1.4]{zbMATH01027930}, $f_{*}$ descends to a homomorphism $f_{*}\colon{\rm Cl}(X)\to{\rm Cl}(Y)$ between Weil divisor class groups. Composing by the natural homomorphism ${\rm Cacl}(X)\to{\rm Cl}(X)$ from the Cartier divisor class group, we can define push-forward for all Cartier divisors.
\begin{lemma}\label{pos}
Let $X,A$ be two projective varieties over a field $K$ with $A$ an abelian variety. Let $f\colon X\to A$ be a generically finite surjective $K$-morphism. Let $R$ be a big Cartier divisor on $X$. Then $f_{*}(R)$ is an ample Cartier divisor on $A$.
\end{lemma}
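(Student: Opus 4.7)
The plan is to verify that $f_{*}(R)$ is simultaneously effective, nef, and big on $A$, and then invoke the classical fact that a big effective line bundle on an abelian variety is automatically ample. Since bigness depends only on the $\QQ$-linear equivalence class, I may first replace $R$ by a $\QQ$-linearly equivalent effective representative, so that $R$ itself is effective. By the definition of pushforward recalled in \S\ref{1.3}, $f_{*}(R)=\sum_{j}n_{j}[k(D_{j}):k(f(D_{j}))]\,f(D_{j})$ (writing $R=\sum n_{j}D_{j}$ with $D_{j}$ prime and $n_{j}>0$) is then an effective Weil divisor on $A$, and it is Cartier because $A$ is smooth.

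Nefness of $f_{*}(R)$ is the standard translation argument on abelian varieties: for any irreducible curve $C\subseteq A$, a generic translate $t_{a}^{*}C$ is not contained in $\mathrm{Supp}(f_{*}R)$, and since $t_{a}^{*}C$ is numerically equivalent to $C$ we get $f_{*}(R)\cdot C=f_{*}(R)\cdot t_{a}^{*}C\geq 0$.

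For bigness, the key step I plan to carry out is the divisor-level inequality $f^{*}f_{*}(R)\geq R$ as Weil divisors on $X$. Expanding $f^{*}f_{*}(R)$ using the ramification indices $e_{D}$ of $f$ along prime divisors $D\subset X$, the coefficient of $D_{j}$ equals $\bigl(\sum_{i:\,f(D_{i})=f(D_{j})}n_{i}[k(D_{i}):k(f(D_{j}))]\bigr)\cdot e_{D_{j}}$, and since $[k(D_{j}):k(f(D_{j}))]\geq 1$ and $e_{D_{j}}\geq 1$ this is at least $n_{j}$. Thus $f^{*}f_{*}(R)-R$ is effective, so $f^{*}f_{*}(R)$ is big on $X$. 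Combining this with the volume identity $\mathrm{vol}_{X}(f^{*}L)=\deg(f)\cdot\mathrm{vol}_{A}(L)$ for finite surjective maps yields $\mathrm{vol}_{A}(f_{*}R)>0$, i.e., $f_{*}(R)$ is big on $A$.

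To conclude, I appeal to the classical consequence of Mumford's theory of line bundles on abelian varieties: an effective big line bundle $L$ on an abelian variety of dimension $g$ is ample. Indeed, by Riemann--Roch on an abelian variety $\chi(L)=L^{g}/g!$, so bigness forces $\chi(L)\neq 0$ (equivalently, the polarization homomorphism $\phi_{L}$ has finite kernel), while effectivity forces the Mumford index $i(L)=0$; together these two conditions characterize ampleness. The main subtle point of the whole argument is the ramification-theoretic bookkeeping establishing $f^{*}f_{*}(R)\geq R$; once that is in hand, the remaining ingredients (the translation argument for nefness, the projection formula for volumes, and Mumford's ampleness criterion) are standard.
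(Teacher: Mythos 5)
Your approach differs from the paper's, which proves that $f_*(R)\cdot C>0$ for every curve $C\subseteq A$ and then applies Serrano's criterion (a strictly nef divisor on an abelian variety is ample), treating first the case $R$ ample and effective by a translation argument and then general $R$ via a decomposition $mR\sim R_1+R_2$ with $R_1$ ample effective and $R_2$ effective. Your target --- effective, nef and big, followed by Mumford's index criterion --- is a legitimate alternative, and your reduction to $R$ effective, the nefness-by-translation step, and the final appeal to $\chi(L)\neq 0$ together with $H^0(L)\neq 0$ are all fine. The gap is in the bigness step. The inequality $f^*f_*(R)\geq R$ need not hold: if a prime component $D_j$ of $R$ is $f$-exceptional, i.e.\ $\dim f(D_j)<\dim D_j$, then $f_*[D_j]=0$, the ramification index $e_{D_j}$ is not defined (since $f$ is not generically finite along $D_j$), and the coefficient of $D_j$ in $f^*f_*(R)$ can be strictly smaller than $n_j$. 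A concrete example: let $X$ be the blow-up of $A$ at a point $p$ with exceptional divisor $E$, let $f\colon X\to A$ be the blow-down, let $H$ be an ample divisor on $A$ avoiding $p$, and let $R=f^*H+E$. Then $R$ is big, effective and Cartier, yet $f^*f_*(R)=f^*H=R-E$, so $f^*f_*(R)-R=-E$ is not effective. The conclusion of the lemma still holds in this example ($f_*(R)=H$ is ample), but your argument does not establish it.

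One could try to repair the argument by discarding the $f$-exceptional part of $R$ and writing $R=R'+R''$ with $R'$ having no exceptional components; then the coefficient computation is correct and gives $f^*f_*(R)=f^*f_*(R')\geq R'$. But one must then prove that $R'$ is still big, which is not automatic from bigness of $R$ and would require a further argument (e.g.\ on the volume of the exceptional part). The paper's decomposition sidesteps all of this: the translation argument gives $f_*(R_1)\cdot C>0$ directly, with no pullback inequality and regardless of whether $R_1$ contains exceptional components, and $f_*(R_2)$ is effective hence nef on the abelian variety, so $m f_*(R)\sim f_*(R_1)+f_*(R_2)$ is ample.
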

\begin{proof}
As $A$ is smooth, all Weil divisors on $A$ is Cartier. In particular, $f_{*}(R)$ is a Cartier divisor on $A$. It remains to show the ampleness. By \cite[Proposition 1.4]{zbMATH00785040}, it suffices to show that for any curve $C\subseteq A$, $f_{*}(R)\cdot C>0$. We first assume $R$ is ample and effective. After a suitable translation, we may assume that $f(R)$ and $C$ intersect properly. Then $C$ does not intersect any irreducible component of $f(R)$ of codimension greater than 1. If $f(R)\cap C=\emptyset$, for any curve $C'\subseteq f^{-1}(C)$, we have $R\cap C'=\emptyset$, which contradicts to the ampleness of $R$. Therefore $C\cdot f_{*}(R)>0$. This proves the case when $R$ is ample and effective.

For general $R$, there exists a positive integer $m$ such that $mR$ is linearly equivalent to $R_{1}+R_{2}$, where $R_{1}$ is ample and effective and $R_{2}$ is effective. By the argument above, $f_{*}(R_{1})$ is ample on $A$. Moreover, $f_{*}(R_{2})$ is effective by definition, so it is also nef by the group structure of $A$ (cf. \cite[Example 1.4.7]{zbMATH02134816}). Therefore $f_{*}(R_{1})+f_{*}(R_{2})$ is ample on $A$. By \cite[Theorem 1.4]{zbMATH01027930}, $f_{*}(R)$ is linearly equivalent to $f_{*}(R_{1})+f_{*}(R_{2})$, and thus it is also ample.
\end{proof}
\begin{remark}\label{redample}
Since any effective divisor on an abelian variety is nef (cf. \cite[Example 1.4.7]{zbMATH02134816}), we can easily prove that an effective divisor $D$ on an abelian variety is ample if and only if $D_{\rm red}$ is ample. Moreover, if $D_{1},D_{2}$ are two effective Cartier divisors on an abelian variety with ${\rm Supp}(D_{1})\subseteq{\rm Supp}(D_{2})$ and $D_{1}$ ample, then $D_{2}$ is also ample. This will be used in \S\ref{5.4}.
\end{remark}
\medskip\medskip

\noindent\textbf{Basic setups}\medskip

\noindent Now we begin to prove Proposition \ref{generic1} by contradiction. After taking normalization, we may assume that $X$ is normal. Assume to the contrary that the height of $\{x_{n}\}_{n\geq1}$ is unbounded. After taking a subsequence, we can assume that the height of $\{x_{n}\}_{n\geq1}$ tends to infinity as $n\to\infty$.

First we repeat the following operations: Take $A_{1}=A,A_{0}={\rm Spec}\, K$ at the beginning. Then we have a finite map $f\colon X\to A_{1}\times A_{0}\times T_{K}$. In each step, if there is a nontrivial quotient abelian variety $A'$ of $A_{1}$ (i.e. a surjective homomorphism $A_{1}\to A'$) such that the height of the image of $\{x_{n}\}_{n\geq1}$ in $A'$ under the natural composition $X\to A_{1}\times A_{0}\times T_{K}\to A_{1}\to A'$ does not tend to infinity, then after taking a subsequence of $\{x_{n}\}_{n\geq1}$ we can assume that the image of $\{x_{n}\}_{n\geq1}$ in $A'$ has bounded height. Note that by our assumption in the previous paragraph, the height of $\{x_{n}\}_{n\geq1}$ still tends to infinity. Choose an isogeny $A_{1}\overset{j}\to{\rm ker}(A_{1}\to A')\times A'$, compose $f$ with the map $j\times{\rm id}\colon A_{1}\times A_{0}\times T_{K}\to{\rm ker}(A_{1}\to A')\times A'\times A_{0}\times T_{K}$, and change $A_{1},A_{0}$ to ${\rm ker}(A_{1}\to A'), A'\times A_{0}$ respectively, we get a finite map $X\to A_{1}\times A_{0}\times T_{K}$. This ends a step. Since the dimension of $A_{1}$ decreases strictly, the operations terminate after finitely many steps. Then we get a finite map $X\to A_{1}\times A_{0}\times T_{K}$ such that the image of $\{x_{n}\}_{n\geq1}$ in any nontrivial quotient abelian variety of $A_{1}$ has height tending to infinity, while the image of $\{x_{n}\}_{n\geq1}$ in $A_{0}(K)$ has bounded height. Note that by the assumption in Proposition \ref{generic1}, the image of $\{x_{n}\}_{n\geq1}$ in $T_{K}(K)$ is contained in $T(\CC)$, and thus also has bounded height. Combining these, we see that the abelian variety $A_{1}$ must be nontrivial.

By Lang--N\'{e}ron theorem (cf. \cite[Theorem 3.4]{arXiv:2305.14789}), $V(A_{0},K)=(A_{0}(K)/A_{0}^{(K/\CC)}(\CC))_{\RR}$ is a finite-dimensional $\RR$-vector space, and the canonical height on $A_{0}(K)$ (with respect to an ample line bundle on $A_{0}$) induces a positive definite quadratic form on $V(A_{0},K)$. As a consequence, there are only finitely many elements in $V(A_{0},K)$ with a fixed bounded height. Hence the image of $\{x_{n}\}_{n\geq1}$ in $A_{0}(K)$ is contained in a finite union of translations of $A_{0}^{(K/\CC)}(\CC)$. As $\{x_{n}\}_{n\geq1}$ is generic in $X$ and $X$ is irreducible, the image of $X$ in $A_{0}$ must be contained in a translation of $(A_{0}^{(K/\CC)})_{K}$. After composing $X\to A_{1}\times A_{0}\times T_{K}$ with a translation on $A_{0}$ (with identity on other components), we may assume that the image of $X$ in $A_{0}$ is contained in $(A_{0}^{(K/\CC)})_{K}$. Let $T'$ be the Zariski closure of the image of $\cup_{n\geq1}\{x_{n}\}$ in $A_{0}^{(K/\CC)}\times T$, which is a complex variety. Then the image of $X$ in $(A_{0}^{(K/\CC)})_{K}\times T_{K}$ is exactly $T'_{K}$. In conclusion, we get a finite morphism $X\to A_{1}\times_{K} T'_{K}$, whose composition with the projection $A_{1}\times_{K} T'_{K}\to T'_{K}$ is surjective.

From now on we focus on the finite morphism $X\to A_{1}\times_{K} T'_{K}$. For simplicity, we replace the original $A$ by $A_{1}$, replace the original $T$ by $T'$, and replace the original $f$ by this map. In other words, we call this map $f\colon X\to A\times_{K} T_{K}$. After taking normalization for $T$, we may assume that $T$ is normal. Let $s_{n}, t_{n}$ be the image of $x_{n}$ in $A(K),T_{K}(K)$ respectively, then $t_{n}\in T(\CC)$ is constant. Since $\{x_{n}\}_{n\geq1}$ is generic in $X$ and $X\to T_{K}$ is surjective, $\{t_{n}\}_{n\geq1}$ is also generic in $T$. Moreover, by our assumption, the image of $\{s_{n}\}_{n\geq1}$ in any nontrivial quotient abelian variety of $A$ has height tending to infinity.

Take a finite field extension of $K$ if necessary, we may assume ${\rm End}_{K}(A)={\rm End}_{\ol{K}}(A_{\ol{K}})$.

Take integral models $\CX,\CA$ of $X, A$ over $B$. Then $\CA\times_{\CC}T$ is an integral model of $A\times T_{K}$ over $B$. Taking a resolution of singularity of $\CA$, we may assume that $\CA$ is smooth. Replacing $\CX$ by the normalization of the Zariski closure of ${\rm Im}(X\to X\times_{K}(A\times_{K} T_{K})\to\CX\times_{B}(\CA\times_{\CC}T))$, we may assume that $\CX$ is normal and $f$ extends to a $B$-morphism $\CX\to\CA\times_{\CC}T$. For simplicity, we still denote this morphism by $f$. Recall that $(s_{n},t_{n})=f(x_{n})\in A(K)\times T_{K}(K)$. By taking Zariski closure in $\CX$, we can view $x_{n}$ as a section $x_{n}\colon B\to\CX$ and similarly for $s_{n},t_{n}$. \medskip

\noindent\textbf{Non-surjective case}\medskip

\noindent We first consider the case when $f\colon X\to A\times_{K}T_{K}$ is not surjective. In this case, we can still use the arguments in \cite{arXiv:2308.08117} combining with a flattening process.

Namely, denote $Z=f(X)\subseteq A\times_{K}T_{K}$ and let $\CZ$ be the Zariski closure of $Z$ in $T\times_{\CC}\CA$. Then $f(x_{n})=(s_{n},t_{n})\in Z(K)\subseteq A(K)\times T_{K}(K)$. By taking Zariski closure in $\CZ$, we can view $(s_{n},t_{n})$ as a section $(s_{n},t_{n})\colon B\to\CZ$.

Consider the composition $\CZ\hookrightarrow\CA\times_{\CC}T\to T$, which is surjective by our assumption. By Raynaud--Gruson's flattening theorem (cf. \cite[Th\'{e}om\`{e}me 5.2.2]{zbMATH03360302}), there is a proper birational map $g\colon T'\to T$ such that the strict transform $\CZ'$ of $\CZ$ under $h$ is flat over $T'$. Let $g_{\CZ}\colon\CZ'\to \CZ$ be the natural morphism. For each $n$, take a point $t'_{n}\in T'(\CC)$ such that $g(t'_{n})=t_{n}$. Then we can lift the section $(s_{n},t_{n})\colon B\to\CZ$ to a section $B\to\CZ'$. View $\CZ'$ as a closed subscheme of $\CA\times_{\CC}T'$, we can denote this section by $(s_{n},t'_{n})$. As $T'(\CC)$ is compact, after taking a subsequence, we can assume that $t'_{n}$ tends to a point $t'_{\infty}\in T'(\CC)$. The flatness of $\CZ'\to T'$ implies that all fibres $\CZ'_{t'}$ has the same dimension. Since $f$ is not surjective, $Z$ is a proper closed subset of $A\times_{K}T_{K}$ and $\CZ$ is a proper closed subset of $\CA\times_{\CC}T$. Therefore $\dim \CZ'=\dim \CZ<\dim (\CA\times_{\CC}T)=\dim(\CA\times_{\CC}T')$. This implies that $\CZ'_{t'}$ is a proper closed subset of $\CA\times\{t'\}$ for all $t'\in T'$. In particular, $\CZ'_{t'_{\infty}}$ is a proper closed subset of $\CA\times\{t'_{\infty}\}$. We then deduce that $\CZ'_{b,t'_{\infty}}$ is a proper closed subset of $\CA_{b}\times\{t'_{\infty}\}$ for all but finitely many $b\in B(\CC)$. Here $\CZ'_{b,t'_{\infty}}$ is the fibre over the point $(b,t'_{\infty})$ under the natural map $\CZ'\to B\times_{\CC} T'$.

Now by our assumption, the image of $\{s_{n}\}_{n\geq1}$ in any nontrivial quotient abelian variety of $A$ has height tending to infinity. Thus by Corollary \ref{location}, for $b\in B(\CC)$ outside a subset of zero Lebesgue measure, there is a Zariski dense subset $U_{b}\subseteq \CA_{b}$ such that each $y\in U_{b}$ is a limit point of the sections $\{s_{n}(B)\}_{n\geq1}$ (in the Euclidean topology). Take $b, U_{b}$ as above such that $\CZ'_{b,t'_{\infty}}\subsetneqq\CA_{b}\times\{t'_{\infty}\}$. Then each point in $U_{b}\times\{t'_{\infty}\}$ is a limit point of sections $\{s_{n}(B)\times\{t'_{n}\}\}_{n\geq1}$, which is contained in $\CZ'$. However, by our assumption $U_{b}\times\{t'_{\infty}\}$, which is Zariski dense in $\CA_{b}\times\{t'_{\infty}\}$, is not contained in $\CZ'_{b,t'_{\infty}}$, a contradiction!

\begin{remark}
The arguments of this non-surjective case essentially implies our main theorem when $X$ is a subvariety of abelian variety, which is known by \cite{zbMATH00721811,zbMATH03832101, zbMATH00016118, zbMATH00224044, zbMATH00957009}.
\end{remark}

\medskip\medskip

\noindent\textbf{Surjective case}\medskip

\noindent Now we assume that $f\colon X\to A\times_{K}T_{K}$ is finite surjective. Recall that we have assumed $X,T$ are both normal. Let $R$ be the ramification divisor of $f\colon X\to A\times_{K} T_{K}$ (see \S\ref{1.3} for the definition) and let $D=f_{*}(R)$ be the branch divisor. Denote by $\CR,\CD$ the Zariski closure of $R,D$ in $\CX,\CA\times_{\CC}T$, respectively.

For $b\in B,t\in T$, denote by $\CX_{b,t}$ the fibre of the natural morphism $\CX\to B\times T$ over $(b,t)$. Similarly we can define $\CR_{b,t},\CD_{b,t}$. We will use this notation frequently later.

\medskip\medskip

\noindent\textbf{Reduce to the case when $\CR$ doesn't contain any ``$T$-vertical'' component}  \medskip

\noindent Consider the composition ${\rm pr}_{T}\circ f\colon \CX\to\CA\times_{\CC}T\to T$. Since $\CX$ is normal, the generic fibre of ${\rm pr}_{T}\circ f$ is normal. Using Stein factorization, we can assume that all the fibres of ${\rm pr}_{T}\circ f$ are geometrically connected. Therefore the generic fibre of ${\rm pr}_{T}\circ f$ is geometrically irreducible.

By \cite[Theorem 0.3]{zbMATH01443405}, there is a generically finite projective surjective morphism $T'\to T$ with $T'$ smooth and a projective strict modification $\CX'\to\CX\times_{T}T'$ such that $\CX'$ is normal and all fibres of the morphism $\CX'\to T'$ are reduced. Moreover, $\CX'\to T'$ is equidimensional. Here a strict modification $\CX'\to\CX\times_{T}T'$ is a birational morphism from $\CX'$ to the Zariski closure of $T'\times_{T}\eta$ in $T'\times_{T}\CX$, where $\eta$ is the generic point of $\CX$. Note that although $T'\times_{T}\CX$ may be reducible, $\CX'$ is irreducible.

The surjectivity of $T'\to T$ means that $x_{n}\colon B\to\CX$ can be lifted (not necessarily unique) to $B\to\CX\times_{T}T'$ for all $n\geq1$. By the genericness of $\{x_{n}\}_{n\geq1}$, after taking a subsequence we may assume that $x_{n}$ can be lifted to $x'_{n}\colon B\to\CX'$ for all $n\geq1$.

Now we have a generically finite morphism $\CX'\to\CA\times_{\CC}T'$, which we denote by $f'$. Let $X'$ be the generic fibre of $\CX'\to B$ and let $R'$ be the ramification divisor of the morphism $X'\to A\times_{K}T'_{K} $. Denote by $\CR'$ the Zariski closure of $R'$ in $\CX'$. Since $\CX'\to T'$ is equidimensional, for any $t'\in T'(\CC)$, $\CX'_{t'}$ and $\CA\times\{t'\}$ have the same dimension. Consider the morphism $f'_{t'}\colon \CX'_{t'}\to\CA\times\{t'\}$ induced by $f'$. For any irreducible component $Z$ of $\CX'_{t'}$, either $f'(Z)$ is a proper closed subset of $\CA\times\{t'\}$, or $Z\to\CA\times\{t'\}$ is generically finite. In the latter case, as $\CX'_{t'}$ is reduced, $Z\to\CA\times\{t'\}$ is generically \'{e}tale. In conclusion, we see that the image of the non-\'{e}tale locus of $f'_{t'}\colon \CX'_{t'}\to\CA\times\{t'\}$ in $\CA\times\{t'\}$ is a proper closed subset of $\CA\times\{t'\}$. Moreover, for any point $x'$ in the \'{e}tale locus of $f'_{t'}$, the tangent map $T_{x'}\CX'_{t'}\to T_{f'(x')}(\CA\times\{t'\})$ is an isomorphism. This implies that the tangent map $T_{x'}\CX'\to T_{f'(x')}(\CA\times_{\CC} T')$ is injective. In particular, $x'\notin\CR'$. Furthermore, since
$$
\dim\CX'\leq\dim_{\CC} T_{x'}\CX'\leq\dim_{\CC} T_{f'(x')}(\CA\times_{\CC} T')=\dim \CA\times_{\CC} T'=\dim\CX'
$$
we have $\dim\CX'=\dim_{\CC} T_{x'}\CX'$, and thus $x'$ lies in the smooth locus of $\CX'$. We conclude that the \'{e}tale locus of $f'_{t'}$ is contained in the smooth locus of $\CX'$.

Let $T_{1}\to T'$ be a finite morphism with $T_{1}$ smooth and (Brody) hyperbolic. In particular, $T_{1}$ does not contain rational curves. The existence of $T_{1}$ is essentially due to Hironaka; see \cite[Lemma 2.1]{arXiv:2307.16223} for a proof (The author only proved the case of surfaces there, but the general situation is completely similar). Let $\CX_{1}$ be the normalization of $\CX'\times_{T'}T_{1}$. Then we have a generically finite morphism $\CX_{1}\to\CA\times_{\CC}T_{1}$, which we denote by $f_{1}$. Let $X_{1}$ be the generic fibre of $\CX_{1}\to B$ and let $R_{1}$ be the ramification divisor of the morphism $X_{1}\to A\times_{K}(T_{1})_{K} $. Denote by $\CR_{1}$ the Zariski closure of $R_{1}$ in $\CX_{1}$. For any $t_{1}\in T_{1}(\CC)$, let $t'$ be the image of $t_{1}$ in $T'(\CC)$. Using the same argument as in the previous paragraph, we can deduce that the \'{e}tale locus of $(\CX'\times_{T'}T_{1})_{t_{1}}\to\CA\times\{t_{1}\}$ is contained in the smooth locus of $\CX'\times_{T'}T_{1}$. This means that the normalization $\CX_{1}\to \CX'\times_{T'}T_{1}$ is an isomorphism over this \'{e}tale locus. In particular, we see that $\CR_{1}$ is disjoint from the inverse image of this \'{e}tale locus in $\CX_{1}$. Moreover, as $(\CX'\times_{T'}T_{1})_{t_{1}}\simeq\CX'_{t'}$ is reduced and has the same dimension as $\CA\times\{t_{1}\}$, the same argument as in the previous paragraph shows that the image of the non-\'{e}tale locus of $(\CX'\times_{T'}T_{1})_{t_{1}}\to\CA\times\{t_{1}\}$ in $\CA\times\{t_{1}\}$ is a proper closed subset of $\CA\times\{t_{1}\}$. Combining all these facts, we conclude that $f_{1}(\CR_{1})$ does not contain $\CA\times\{t_{1}\}$ for any $t_{1}\in T_{1}$.

Furthermore, we can lift the sections $\{x'_{n}\}_{n\geq1}$ to the sections $\{x_{n,1}\}_{n\geq1}$ on $\CX_{1}$ for all but finitely many $n$.  Thus replacing $X',\CX'\to\CA\times_{\CC}T'$ by an irreducible component of $X_{1},\CX_{1}\to\CA\times_{\CC}T_{1}$, we may assume at the beginning that $T'$ is smooth and does not contain rational curves, $\CX'$ is normal and $f'(\CR')$ does not contain $\CA\times_{\CC}\{t'\}$ for any $t'\in T'$.

Using Stein factorization for $f'$, we get a birational morphism $\pi\colon \CX'\to\CX''$ and a finite morphism $f''\colon \CX''\to\CA\times_{\CC}T'$ such that $f'=f''\circ\pi$. Moreover, $\CX''$ is normal by \cite[Example 2.1.15]{zbMATH02134816}. Let $x''_{n}$ be the image of $x'_{n}$ in $\CX''$. Let $X''$ be the generic fibre of $\CX''\to B$. Let $R''$ be the ramification divisor of $X''\to A\times T'_{K}$ and let $\CR''$ be the Zariski closure of $R''$ in $\CX''$.

Finally, as $\CX',\CX''$ are normal, we have $\CR''=\pi_{*}(\CR')$ and hence $f''(\CR'')$ does not contain $\CA\times\{t'\}$ for any $t'\in T'(\CC)$.

From now on we focus on $\CX''\to\CA\times_{\CC}T'\to T'$ and the sections $\{x''_{n}\}_{n\geq1}$. In order to simplifying notations, we replace the original data
$$
(f\colon X\to A\times T_{K},R,f\colon \CX\to\CA\times_{\CC}T,\CR,\{x_{n}\}_{n\geq1})
$$
by
$$
(f''\colon X''\to A\times T'_{K},R'',f''\colon \CX''\to\CA\times_{\CC}T',\CR'',\{x''_{n}\}_{n\geq1}).
$$
This means that we assume the original data satisfy the following properties:

\begin{enumerate}[label=(\arabic*)]
\item $T$ is smooth and does not contain any rational curves.

\item $f\colon X\to A\times T_{K}$ is finite surjective and $\CX$ is normal.

\item $f(R)$ does not contain $A\times\{t\}$ for any $t\in T(\CC)$.

\end{enumerate}
Note that $A$ also does not contain rational curves, so the finiteness of $f$ implies that $X$ does not contain rational curves.

\medskip\medskip

\noindent\textbf{Take canonical model}
\medskip

\noindent Let $\pi\colon \widetilde{X}\to X$ be a resolution of singularity of $X$. Since $X$ is of general type, so is $\widetilde{X}$. By \cite[Corollary 1.1.1(3)]{zbMATH05775673}, the canonical ring $\oplus_{m\geq0}H^{0}(\widetilde{X},\CO_{\widetilde{X}}(mK_{\widetilde{X}}))$ of $\widetilde{X}$ is finitely generated. Let $X_{\rm can}={\rm Proj}\oplus_{m\geq0}H^{0}(\widetilde{X},\CO_{\widetilde{X}}(mK_{\widetilde{X}}))$ be the canonical model of $\widetilde{X}$ and let $\pi_{\rm can}\colon \widetilde{X}\dashrightarrow X_{\rm can}$ be the canonical rational map. Replace $\widetilde{X}$ by a resolution of singularity of $X_{\rm can}$ if necessary, we may assume that $\pi_{\rm can}$ is a morphism. By \cite[Theorem 1.15]{zbMATH06148846}, which is due to Reid, $X_{\rm can}$ is normal, projective, birational to $X$ and has canonical singularities. In particular, $X_{\rm can}$ has dlt singularities. Moreover, the canonical divisor $K_{X_{\rm can}}$ of $X_{\rm can}$ is $\QQ$-Cartier and ample. Since $X$ does not contain rational curves, by a theorem of Hacon-Mckernan (cf. \cite[Corollary 1.7]{zbMATH05166600}), there is a birational morphism $\varpi\colon X_{\rm can}\to X$ extending the rational map $\pi\circ(\pi_{\rm can})^{-1}\colon X_{\rm can}\dashrightarrow X$. Consider the composition $X_{\rm can}\overset{\varpi}\to X\overset{f}\to A\times_{K} T_{K}$ which is generically finite. Let $R_{\rm can}$ be the ramification divisor of this morphism. By Hurwitz's formula,
\begin{align}\label{Hur}
K_{X_{\rm can}}\sim(f\circ\varpi)^{*}K_{(A\times_{K} T_{K})}+R_{\rm can}.
\end{align}
As $K_{X_{\rm can}}$ is $\QQ$-Cartier and $T$ is smooth, we see from (\ref{Hur}) that $R_{\rm can}$ is also $\QQ$-Cartier. Moreover, since the restriction of $K_{(A\times_{K} T_{K})}$ to any fibre $A\times\{t\}$ is isomorphic to $K_{A\times\{t\}}$, which is trivial, we see that
$$
K_{{X_{\rm can}}}\big|_{(X_{\rm can})_{t}}\sim R_{\rm can}\big|_{(X_{\rm can})_{t}}.
$$
Since $K_{X_{\rm can}}$ is ample, we see that $R_{\rm can}$ is relatively ample over $T_{K}$ (hence it is also $\varpi$-ample).\medskip

\noindent Now we have the following lemma.

\begin{lemma}\label{exactly}
$R$ is exactly the image of $R_{\rm can}$ in $X$.
\end{lemma}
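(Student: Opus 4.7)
My plan is to interpret the identity as the equality $\varpi_{*}(R_{\rm can})=R$ of Weil divisors on $X$, where $\varpi_{*}$ is the proper pushforward on the divisor class group recalled in \S\ref{1.3}. I would split the prime components of $R_{\rm can}$ into a ``horizontal'' part $R_{\rm can}^{\rm hor}$ consisting of components $D'$ for which $\varpi(D')$ is again a prime divisor of $X$, and an ``exceptional'' part $R_{\rm can}^{\rm exc}$ consisting of those with $\varpi(D')$ of codimension $\geq 2$ in $X$. By the very definition of the pushforward, $\varpi_{*}(R_{\rm can}^{\rm exc})=0$ for dimensional reasons, so it suffices to verify $\varpi_{*}(R_{\rm can}^{\rm hor})=R$.

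For a prime divisor $D\subseteq X$, I would take its strict transform $D'\subseteq X_{\rm can}$. Since $\varpi$ is a birational morphism between normal varieties, the local rings $\CO_{X,D}$ and $\CO_{X_{\rm can},D'}$ are DVRs inside the common function field $K(X)=K(X_{\rm can})$; normality in codimension one pins down each such DVR from its divisor, so they coincide as subrings. Plugging this identification into the conormal exact sequences
$$
(f\circ\varpi)^{*}\Omega_{A\times_{K}T_{K}}\longrightarrow\Omega_{X_{\rm can}}\longrightarrow\Omega_{X_{\rm can}/A\times_{K}T_{K}}\longrightarrow 0
$$
and its analogue for $f$ at the corresponding DVRs, I would deduce
$$
\mathrm{length}_{k(D')}\bigl(\Omega_{X_{\rm can}/A\times_{K}T_{K}}\bigr)_{D'}=\mathrm{length}_{k(D)}\bigl(\Omega_{X/A\times_{K}T_{K}}\bigr)_{D}.
$$
This shows that $D\leftrightarrow D'$ is a multiplicity-preserving bijection between prime components of $R$ and prime components of $R_{\rm can}^{\rm hor}$, and since $[K(D'):K(D)]=1$, summing over components gives $\varpi_{*}(R_{\rm can}^{\rm hor})=R$ as desired.

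I do not expect any genuine obstacle. The one point that deserves a moment of care is ensuring that no prime divisor of $R$ is ``lost'', i.e., that the strict transform $D'$ of every component $D\subseteq R$ actually occurs with positive multiplicity in $R_{\rm can}$; this is immediate from the matching of lengths above. A secondary remark worth recording is that the exceptional components of $\varpi$ typically \emph{do} contribute to $R_{\rm can}$ (since they are contracted to loci of codimension $\geq 2$, the differential of $f\circ\varpi$ drops rank along them), but this is harmless precisely because $\varpi_{*}$ annihilates them, which is the whole reason the decomposition argument closes.
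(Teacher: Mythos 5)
Your argument establishes the pushforward-cycle identity $\varpi_{*}(R_{\rm can})=R$, which is correct but is not what the lemma asserts or what the paper needs. Both the way the paper's proof concludes ($R_{\rm can}\subseteq\varpi^{-1}(R)$ as sets) and the later application (deducing $\widetilde{\CR}\subseteq c_{3}(f\circ\pi)^{*}\CD_{\rm red}$ from $\widetilde{R}=\pi_{\rm can}^{*}R_{\rm can}$) make clear that ``the image of $R_{\rm can}$'' means the set-theoretic image $\varpi({\rm Supp}\,R_{\rm can})$, and the real content of the lemma is the containment ${\rm Supp}(R_{\rm can})\subseteq\varpi^{-1}(R)$: no $\varpi$-exceptional component of $R_{\rm can}$ lies over $V=X\setminus R$. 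Your closing remark that the $\varpi$-exceptional divisors ``typically do contribute to $R_{\rm can}$ \dots\ but this is harmless because $\varpi_{*}$ annihilates them'' inverts the point: it is not harmless, because if such a component occurred over $V$, then $\widetilde{R}=\pi_{\rm can}^{*}R_{\rm can}$ would have support escaping $(f\circ\pi)^{-1}(D)$ and the later inclusion $\widetilde{\CR}\subseteq c_{3}(f\circ\pi)^{*}\CD_{\rm red}$ would fail.

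The ingredient you are missing is a positivity argument. You correctly observe that for a general resolution or birational modification the exceptional divisors are contracted by the composition with $f$ and so do appear in the ramification divisor; what rules this out here is the special feature of the canonical model that $K_{X_{\rm can}}$ is ample, which via Hurwitz makes $R_{\rm can}\sim K_{X_{\rm can}}-(f\circ\varpi)^{*}K_{A\times_{K}T_{K}}$ relatively ample over $T_{K}$ and in particular $\varpi$-nef. Restricting to $\varpi^{-1}(V)$ gives an effective $\varpi$-exceptional $\varpi$-nef $\QQ$-Cartier divisor $E$, and the Koll\'ar--Mori negativity lemma (\cite[Lemma~3.39]{KM}) then forces $E=0$. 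Without this step there is a genuine gap: the weaker pushforward identity you prove does not rule out exceptional components of $R_{\rm can}$ over $V$, which is precisely what the lemma is for.
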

\begin{proof}
Since $\varpi$ is an isomorphism outside a Zariski closed subset of $X$ of codimension greater than 1, we see that $\varpi_{*}(R_{\rm can})$ contains $R$ and the complement $\varpi(R_{\rm can})\backslash R$ is of codimension greater than 1 in $X$. We need to show that this complement is actually empty.

Let $V=X\backslash R$ be the complement of $R$ in $X$. Then $R$ restricts to 0 on $V$. Thus the restriction of $R_{\rm can}$ to $\varpi^{-1}(V)$ is an effective exceptional divisor $E$ of $\varpi$ (that is, an effective divisor on $\varpi^{-1}(V)\subseteq X_{\rm can}$ whose image in $V\subseteq X$ has codimension greater than 1). Use the negativity lemma (cf. \cite[Lemma 3.39]{zbMATH01206984}) to the $\QQ$-Cartier divisor $-E$, we see that $-E$ is effective, which means that $E=0$. Therefore we have $R_{\rm can}\subseteq \varpi^{-1}(R)$.
\end{proof}

 Recall that $D=f_{*}(R)$ is the branch divisor of $f$. Then by our assumption, $D$ does not contain $A\times\{t\}$ for any $t\in T_{K}$. This means that $D\cap (A\times\{t\})$ is an effective divisor on $A\times\{t\}$ for any closed point $t\in T_{K}$. Now we have the following lemma.
\begin{lemma}\label{ample1}
For any closed point $t\in T_{K}$, $D\cap (A\times\{t\})$ is ample on $A\times\{t\}$.
\end{lemma}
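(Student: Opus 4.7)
The plan is to transfer the ampleness of $R_{\rm can}$ down to $A\times\{t\}$ by applying Lemma \ref{pos} to a suitably chosen component of $(X_{\rm can})_t$, then close the argument using Remark \ref{redample}. The key inputs are Hurwitz's formula (\ref{Hur}), the ampleness of $K_{X_{\rm can}}$, Lemma \ref{exactly}, and the hypothesis that $D$ does not contain any fibre $A\times\{t\}$.

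First, I would restrict the relation (\ref{Hur}) to the fibre $(X_{\rm can})_t$. Since $K_{A\times_K T_K}$ restricts trivially to the abelian variety $A\times\{t\}$, pulling back under $g \colonequals f\circ\varpi$ shows that $(g^{*}K_{A\times_K T_K})|_{(X_{\rm can})_t}$ is trivial, so $R_{\rm can}|_{(X_{\rm can})_t} \sim K_{X_{\rm can}}|_{(X_{\rm can})_t}$. Ampleness of $K_{X_{\rm can}}$ then forces $R_{\rm can}|_{(X_{\rm can})_t}$ to be an ample $\QQ$-Cartier divisor on every closed subscheme of $(X_{\rm can})_t$.

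Next, I would pick an irreducible component $Y$ of $(X_{\rm can})_t$ that dominates $A\times\{t\}$. Such $Y$ exists because the composition $(X_{\rm can})_t\to X_t\to A\times\{t\}$ is surjective: the first arrow is surjective since $\varpi$ is birational hence surjective, and the second by the finiteness and surjectivity of $f$. By the principal ideal theorem, every component of $(X_{\rm can})_t$ has dimension at least $\dim X_{\rm can}-\dim T_{K}=\dim A$, so a dominating component $Y$ has dimension exactly $\dim A$, and $g_{Y}\colon Y\to A\times\{t\}$ is generically finite surjective. Since $R_{\rm can}|_{Y}$ is ample, it is in particular a big $\QQ$-Cartier divisor on $Y$. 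Applying Lemma \ref{pos} (extended to $\QQ$-Cartier divisors by clearing denominators) to $g_{Y}$ produces an ample $\QQ$-Cartier divisor $(g_{Y})_{*}(R_{\rm can}|_{Y})$ on $A\times\{t\}$.

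Finally, I would identify the support. We have ${\rm Supp}((g_{Y})_{*}(R_{\rm can}|_{Y}))\subseteq g({\rm Supp}(R_{\rm can}))\cap(A\times\{t\})$, and by Lemma \ref{exactly} together with $D=f_{*}(R)$, one has $g({\rm Supp}(R_{\rm can}))=f({\rm Supp}(R))={\rm Supp}(D)$. Since by assumption $D$ does not contain $A\times\{t\}$, this yields ${\rm Supp}((g_{Y})_{*}(R_{\rm can}|_{Y}))\subseteq {\rm Supp}(D|_{A\times\{t\}})$. Remark \ref{redample} then concludes that $D|_{A\times\{t\}}$ is ample on $A\times\{t\}$. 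The main obstacle I expect is the dimension and dominance bookkeeping for $(X_{\rm can})_t$ (since $X_{\rm can}\to T_{K}$ need not be flat), and making sure the pushforward-restriction comparison in the last step is honest at the level of supports rather than merely cycles.
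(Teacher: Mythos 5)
The overall skeleton (restrict Hurwitz to a fibre, use ampleness of $K_{X_{\rm can}}$ to get an ample $\QQ$-divisor, push forward along a generically finite map via Lemma \ref{pos}, then invoke Remark \ref{redample}) matches the paper. The difference is that you attempt to run this for \emph{every} closed point $t\in T_K$ directly, whereas the paper first proves the claim for a \emph{general} $t$ (where $(X_{\rm can})_t$ is pure of dimension $\dim A$ and $(X_{\rm can})_t\to A\times\{t\}$ is generically finite) and then extends to all $t$ by citing \cite[Remark 1.10(b)]{FC}, which says fibrewise ampleness on an abelian scheme is an open and closed condition.

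Your direct argument has a gap at the dimension step. You write that since every component of $(X_{\rm can})_t$ has dimension $\geq\dim X_{\rm can}-\dim T_K=\dim A$, a component $Y$ dominating $A\times\{t\}$ must have dimension exactly $\dim A$. This does not follow: the Krull bound gives only a lower bound, and dominance of $g_Y\colon Y\to A\times\{t\}$ controls $\dim g_Y(Y)$, not $\dim Y$. The map $\varpi\colon X_{\rm can}\to X$ is a proper birational morphism of normal varieties, so its non-isomorphism locus in $X$ has codimension $\geq 2$; but $X_t$ itself has codimension $\dim T_K$, and once $\dim T_K\geq 2$ it is possible for $X_t$ to lie entirely in that locus. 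In that case $\varpi^{-1}(X_t)=(X_{\rm can})_t$ consists of components of dimension $>\dim A$, and $g_Y\colon Y\to A\times\{t\}$ is surjective but not generically finite, so Lemma \ref{pos} does not apply to it. One could repair this by cutting $Y$ with $\dim Y-\dim A$ general members of $|mR_{\rm can}|_Y|$ to produce a generically finite surjection onto $A\times\{t\}$ carrying a big $\QQ$-Cartier divisor supported where you want (your own observation that $Y\not\subseteq\operatorname{Supp}R_{\rm can}$, forced by the hypothesis that $D$ does not contain $A\times\{t\}$, still applies to the cut-down subvariety). But this extra surgery is exactly what the Faltings--Chai open-closed argument lets the paper avoid, which is why proving the statement first for general $t$ is the cleaner route.
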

\begin{proof}
For general point $t\in T_{K}$, $R_{\rm can}\cap(X_{\rm can})_{t}$ is of codimension 1 in $(X_{\rm can})_{t}$, which is an ample divisor on $(X_{\rm can})_{t}$ by the relative ampleness of $R_{\rm can}$ over $T$. Furthermore, the restriction map $(X_{\rm can})_{t}\to A\times\{t\}$ is generically finite for general $t$. Thus by Lemma \ref{pos}, we see that $(f\circ\varpi)_{*}(R_{\rm can}\cap(X_{\rm can})_{t})$ is ample on $A\times\{t\}$ for general $t\in T_{K}$. Note that the support of $(f\circ\varpi)_{*}(R_{\rm can}\cap(X_{\rm can})_{t})$ is contained in $D\cap (A\times\{t\})$. Thus by remark \ref{redample}, the restriction of $\CO_{A\times T_{K}}(D)$ on $A\times\{t\}$ is ample for general $t$. Finally by \cite[Remark 1.10(b)]{zbMATH00049143}, the restriction of $\CO_{A\times T_{K}}(D)$ on $A\times\{t\}$ is ample for every $t\in T_{K}$. In other words, $D\cap (A\times\{t\})$ is ample on $A\times\{t\}$.
\end{proof}
Take an integral model $\widetilde{\CX}$ of $\widetilde{X}$ over $B$ such that $\widetilde{X}\overset{\pi}\to X\overset{f}\to A\times T_{K}$ extend to $B$-morphisms $\widetilde{\CX}\to\CX\to A\times T_{K}$. For simplicity, we still denote these two morphisms by $\pi, f$. Taking a resolution of singularity if necessary, we can further assume that $\widetilde{\CX}$ is smooth.

Denote $\widetilde{R}=\pi_{\rm can}^{*}R_{\rm can}$ which is an effective $\QQ$-Cartier divisor on $\widetilde{X}$. Recall that we have defined $\CR,\CD$ to be the Zariski closure of $R,D$ in $\CX,\CA\times_{\CC}T$, respectively. Similarly, denote by $\widetilde{\CR}$ the Zariski closure of $\widetilde{R}$ in $\widetilde{\CX}$. Then the smoothness of $\widetilde{\CX}$ implies that $\widetilde{\CR}$ is $\QQ$-Cartier.

As $T(\CC)$ is compact, after taking a subsequence we may assume that $t_{n}$ tends to a point $t_{\infty}\in T(\CC)$ in the Euclidean topology.

\begin{lemma}\label{bigness}
There exists a positive rational number $c_{1}>0$ such that
$\widetilde{\CR}-c_{1}(f\circ\pi)^{*}\CD_{\rm red}$ is $\QQ$-linearly equivalent to $\widetilde{\CE}-\widetilde{\CV}_{T}-\widetilde{\CV}_{B}$, where $\widetilde{\CE}$ is an effective $\QQ$-Cartier divisor on $\widetilde{\CX}$ whose image in $\CA\times_{\CC}T$ does not contain $\CA\times\{t_{\infty}\}$ , $\widetilde{\CV}_{T}$ is a $\QQ$-Cartier divisor on $\widetilde{\CX}$ such that the image of ${\rm Supp}(\widetilde{\CV}_{T})$ in $T$ does not contain $t_{\infty}$ $($in particular, this implies that the image of ${\rm Supp}(\widetilde{\CV}_{T})$ in $T$ is of codimension $1)$, and $\widetilde{\CV}_{B}$ is a $\QQ$-Cartier divisor on $\widetilde{\CX}$ such that the image of ${\rm Supp}(\widetilde{\CV}_{B})$ in $B$ is finite.
\end{lemma}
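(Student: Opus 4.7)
I would combine Hurwitz's formula on $X_{\rm can}$ with Lemma \ref{ample1} to show that $R_{\rm can}-c_1(f\circ\varpi)^{*}D_{\rm red}$ is relatively ample over $T_K$ for small rational $c_1>0$, invoke Kodaira's lemma to convert this into a $\QQ$-linear equivalence of the form ``effective minus pullback of an ample from $T_K$'', and then extend to $\widetilde{\CX}$ while moving the $T_K$-pullback off $t_\infty$ into $\widetilde{\CV}_T$ and absorbing the vertical-over-$B$ discrepancy into $\widetilde{\CV}_B$.

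Concretely, on a general fiber $(X_{\rm can})_t$ Hurwitz gives $R_{\rm can}|_{(X_{\rm can})_t}\sim K_{X_{\rm can}}|_{(X_{\rm can})_t}$, which is ample by the relative ampleness of $R_{\rm can}$ over $T_K$ (established just before the lemma via the canonical model); Lemma \ref{ample1} together with finiteness of $(f\circ\varpi)|_t$ then makes $(f\circ\varpi)^{*}D_{\rm red}|_{(X_{\rm can})_t}$ ample as well. For $c_1>0$ sufficiently small the fiberwise difference is therefore ample, so $R_{\rm can}-c_1(f\circ\varpi)^{*}D_{\rm red}$ is relatively ample over $T_K$. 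Picking an ample $H$ on $T_K$ and $m>0$ large enough that $m(R_{\rm can}-c_1(f\circ\varpi)^{*}D_{\rm red})+\rho^{*}H$ is ample on the projective $X_{\rm can}$ (with $\rho\colon X_{\rm can}\to T_K$), and taking an effective $\QQ$-linear representative in this ample class, yields $R_{\rm can}-c_1(f\circ\varpi)^{*}D_{\rm red}\sim_{\QQ}E^{\rm can}-\tfrac{1}{m}\rho^{*}H$ for an effective $\QQ$-Cartier divisor $E^{\rm can}$ on $X_{\rm can}$.

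I would then pull back by $\pi_{\rm can}$ and take Zariski closure in $\widetilde{\CX}$: setting $\widetilde{\CE}_0$ equal to the closure of $\pi_{\rm can}^{*}E^{\rm can}$, this is an effective $\QQ$-Cartier divisor on $\widetilde{\CX}$, and by choosing $E^{\rm can}$ Bertini-generically in the very ample system $|n(m(R_{\rm can}-c_1(f\circ\varpi)^{*}D_{\rm red})+\rho^{*}H)|$ for $n\gg 0$, together with property~(3) of the preceding reduction (which prevents $\CA\times\{t_\infty\}$ from lying in $\CD$), one arranges that no component of $\widetilde{\CE}_0$ has image in $\CA\times_{\CC}T$ containing $\CA\times\{t_\infty\}$. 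Meanwhile, $H$ extends to an ample divisor $H'$ on $T$ and is $\QQ$-linearly equivalent to an effective $\QQ$-divisor $H''$ on $T$ avoiding $t_\infty$ (since on a smooth projective $T$, a sufficiently positive multiple of any ample class has sections not vanishing at a prescribed point); hence $\widetilde{\CV}_T:=\tfrac{1}{m}\tilde{\rho}_{\CX}^{*}H''$, where $\tilde{\rho}_{\CX}\colon \widetilde{\CX}\to T$ is the induced morphism, has image in $T$ equal to $H''$, disjoint from $t_\infty$. Finally, the residual $\QQ$-divisor $\widetilde{\CR}-c_1(f\circ\pi)^{*}\CD_{\rm red}-(\widetilde{\CE}_0-\widetilde{\CV}_T)$ restricts to $0$ on the generic fiber $\widetilde{X}$ and is therefore supported in finitely many closed fibers of $\widetilde{\CX}\to B$, giving $\widetilde{\CV}_B$.

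\textbf{Main obstacle.} The essential difficulty is that $(f\circ\pi)^{*}\CD_{\rm red}$ contains non-ramified preimages of the branch divisor with positive multiplicity while $\widetilde{\CR}$ does not; naive support-matching on $X_{\rm can}$ fails, because $\widetilde{\CR}-c_1(f\circ\pi)^{*}\CD_{\rm red}$ then has negative coefficients along divisors that dominate $T$ (so cannot enter $\widetilde{\CV}_T$) and are horizontal over $B$ (so cannot enter $\widetilde{\CV}_B$). The Hurwitz--Kodaira detour through relative ampleness over $T_K$ is exactly what converts this problematic negative part into a pullback from $T_K$, which is then readily moved off $t_\infty$. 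The secondary technicality is Bertini-genericity for $E^{\rm can}$ to guarantee the goodness of $\widetilde{\CE}$: one has to check that the bad locus — effective divisors in the ample class with a component whose closure in $\widetilde{\CX}$ has image containing $\CA\times\{t_\infty\}$ — is a proper closed subset of the linear system, which follows from property~(3) and the size of the system for $n$ large.
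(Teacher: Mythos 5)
Your skeleton matches the paper's: establish relative ampleness of $R_{\rm can}-c_1(f\circ\varpi)^{*}D_{\rm red}$ over $T_K$, add a pullback of an ample from $T$ (chosen off $t_\infty$) to make the sum ample, take an effective representative in that class, and then close up in $\widetilde{\CX}$ to produce $\widetilde{\CE}$, $\widetilde{\CV}_T$, $\widetilde{\CV}_B$. However, your route to the first step is both longer and has a real gap. You argue fiberwise: $R_{\rm can}|_{(X_{\rm can})_t}$ is ample, so for $c_1$ small the fiberwise difference is ample. But the small $c_1(t)$ you get this way depends a priori on $t$; to conclude relative ampleness you need a single $c_1$ working uniformly in $t$, and that uniformity is not supplied by the fiberwise statement alone. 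The paper sidesteps this entirely: it observes that $K_{X_{\rm can}}$ is ample on all of $X_{\rm can}$, hence $K_{X_{\rm can}}-c_1(f\circ\varpi)^{*}D_{\rm red}$ is ample for a single small rational $c_1>0$ (a global statement, immediate since the set of ample $\RR$-divisor classes is open), and then Hurwitz plus the triviality of $K_{A\times T_K}$ on fibers $A\times\{t\}$ gives the relative ampleness with that same $c_1$. Two minor further slips in your write-up: the invocation of Lemma \ref{ample1} is unnecessary here, and the accompanying claim that $(f\circ\varpi)|_{(X_{\rm can})_t}$ is finite is not justified ($\varpi$ is only proper birational, so the composite is generically finite; the pullback of an ample by a generically finite map is big and nef but need not be ample) — fortunately the ampleness of $(f\circ\varpi)^{*}D_{\rm red}|_t$ is not actually used anywhere in your argument; and the standard statement from \cite[Proposition 1.7.10]{Laz} puts the large multiplier on the pullback of the ample from the base, i.e.\ $L+m\rho^{*}H$ ample for $m\gg0$, not $mL+\rho^{*}H$. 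Your Bertini discussion and the handling of $\widetilde{\CV}_T$, $\widetilde{\CV}_B$ are fine and in the same spirit as the paper.
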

\begin{proof}
Since $K_{X_{\rm can}}$ is ample on $X_{\rm can}$, there exists a positive rational number $c_{1}>0$ such that $K_{X_{\rm can}}-c_{1}(f\circ\varpi)^{*}D_{\rm red}$ is also ample. By (\ref{Hur}), we see that $R_{\rm can}-c_{1}(f\circ\varpi)^{*}D_{\rm red}$ is relatively ample over $T$. Fix an ample divisor $A_{T}$ on $T$ whose support does not contain $t_{\infty}$.  Now by \cite[Proposition 1.7.10]{zbMATH02134816}, $R_{\rm can}-c_{1}(f\circ\varpi)^{*}D_{\rm red}+n(f\circ\varpi)^{*}A_{T}$ is ample on $X_{\rm can}$ for some positive integer $n$. Thus it is $\QQ$-linearly equivalent to an effective $\QQ$-Cartier divisor $E$ on $X_{\rm can}$ such that $(f\circ\varpi)({\rm Supp}(E))$ does not contain $A\times \{t_{\infty}\}$.   Now we have
$$
R_{\rm can}-c_{1}(f\circ\varpi)^{*}D_{\rm red}\sim E-n(f\circ\varpi)^{*}A_{T}.
$$
Pullback everything to $\widetilde{X}$, we get
$$
\widetilde{R}-c_{1}(f\circ\pi)^{*}D_{\rm red}\sim \widetilde{E}-n(f\circ\pi)^{*}A_{T}
$$
where $\widetilde{E}=\pi_{\rm can}^{*}E$ is an effective $\QQ$-Cartier divisor on $\widetilde{X}$.

Taking the Zariski closure in $\widetilde{\CX}$ and using the smoothness of $\widetilde{\CX}$, we have
\begin{align}\label{xianxingdengjia}
\widetilde{\CR}-c_{1}(f\circ\pi)^{*}\CD_{\rm red}\sim\widetilde{\CE}-\widetilde{\CV}_{T}-\widetilde{\CV}_{B},
\end{align}
where $\widetilde{\CE},\widetilde{\CV}_{T}$ are the Zariski closure of $\widetilde{E},n(f\circ\pi)^{*}A_{T}$ in $\widetilde{\CX}$, respectively. And $\widetilde{\CV}_{B}$ is a $\QQ$-Cartier divisor on $\widetilde{\CX}$ such that the image of ${\rm Supp}(\widetilde{\CV}_{B})$ in $B$ is finite.

By our choice, we see that $c_{1},\widetilde{\CE},\widetilde{\CV}_{T},\widetilde{\CV}_{B}$ satisfy the conditions in the lemma.
\end{proof}

As $\pi\colon \widetilde{X}\to X$ is birational and $\{x_{n}\}_{n\geq1}$ is generic in $X$, after taking a subsequence, we may assume that $x_{n}$ can be uniquely lifted to a point $\tilde{x}_{n}\in \widetilde{X}(K)$ for all $n\geq1$. Similarly as before, we often view $\tilde{x}_{n}$ as a section $\tilde{x}_{n}\colon B\to\widetilde{\CX}$.
\subsection{Step 2: lower bound of transversal intersections}\label{5.4}
Now we come to the first estimate in the counting argument. We will give an upper bound (see Lemma \ref{subtle}) on the number of tangency points between the sections $\{(s_{n},t_{n})\}_{n\geq1}$ and $\widetilde{\CD}_{\rm red}$, or equivalently, a lower bound on their transversal intersection numbers. The main ingredients in the estimate are Theorem \ref{Nevanlinna}(1) and Lemma \ref{key}. All the estimates in this subsection are performed on $\CA\times_{\CC}T$ and do not involve $\widetilde{\CX}$.

Before estimating, we need to fix some notations. In order to present the proof in a logical order, part of these notations will only be used in the next subsection. Below we fix all the data and indicate which items will be needed later.\medskip\medskip

\noindent\textbf{Choice of all data}\medskip

\noindent Let $\CE'$ be an effective Cartier divisor on $\CA\times_{\CC}T$ such that
\begin{align}\label{CE'}
\widetilde{\CE}\subseteq (f\circ\pi)^{*}\CE'
\end{align}
Since $(f\circ\pi)({\rm Supp}(\widetilde{\CE}))$ does not contain $\CA\times\{t_{\infty}\}$, we can choose $\CE'$ such that ${\rm Supp}(\CE')$ does not contain $\CA\times\{t_{\infty}\}$. This will only be used in the next subsection.

Let $\widetilde{\CR}_{0}$ be the union of all irreducible components of $\widetilde{\CR}$ which dominate some irreducible component of $\CR$. View $\widetilde{\CR}_{0}$ as a $\QQ$-Cartier divisor on $\widetilde{\CX}$ such that for any irreducible component of $\widetilde{\CR}_{0}$, its multiplicity in $\widetilde{\CR}_{0}$ is exactly the same as its multiplicity in $\widetilde{\CR}$. Denote $\widetilde{\CR}_{1}=\widetilde{\CR}-\widetilde{\CR}_{0}$. Then $\widetilde{\CR}_{1}$ is an effective $\QQ$-Cartier divisor on $\widetilde{\CX}$ whose image in $\CX$ has codimension greater than 1.

Let $\CZ_{1}$ be the image of $\widetilde{\CR}_{1}$ in $\CA\times_{\CC}T$. Use Raynaud--Gruson's flattening theorem (cf. \cite[Th\'{e}or\`{e}me 5.2.2]{zbMATH03360302}) to the composition $\CZ_{1}\hookrightarrow\CA\times_{\CC}T\to T$, we get a proper birational map $g\colon T'\to T$ such that the strict transform $\CZ'_{1}$ of $\CZ_{1}$ under $g$ is flat over $T'$. Note that $\CZ'_{1}$ is a closed subscheme of $\CA\times_{\CC}T'$, and the inverse image of $\CZ_{1}$ in $\CA\times_{\CC}T'$ is the union of $\CZ'_{1}$ and some $T'$-vertical closed subscheme $\CV_{T'}$ of $\CA\times_{\CC}T'$ (i.e. the image of $\CV_{T'}$ in $T'$ is not Zariski dense). After removing finitely many points, we can assume that $t_{n}$ has a unique pre-image $t'_{n}$ in $T'$ which does not lie in ${\rm pr}_{T'}(\CV_{T'})$ for all $n\geq1$. Furthermore, we can assume $t'_{n}$ tends to $t'_{\infty}$ in the Euclidean topology. By our definition of $\widetilde{\CR}_{1},\CZ_{1},\CZ'_{1}$, we see that for any $t'\in T'$, $(\CZ'_{1})_{t'}$ has codimension greater than 1 in $\CA\times\{t'\}$. Let $Z_{1}$ be the image of $(\CZ'_{1})_{t'_{\infty}}\subseteq\CA\times\{t'_{\infty}\}$ in $\CA\times\{t_{\infty}\}$. We will only use $\CR_{0},\CR_{1}$ (and other related notations such as $Z_{1}$) in the next subsection.

Using Lemma \ref{key} for the morphism $\CD_{\red}\hookrightarrow \CA\times_{\CC}T\to T$ and the sequence $t_{n}\to t_{\infty}$ of points of $T$, we get a Zariski open dense subset $\CD_{t_{\infty}}^{\circ}\subseteq \CD_{t_{\infty}}$ satisfying the properties in Lemma \ref{key}. Furthermore, taking a subsequence of $\{t_{n}\}_{n\geq1}$ if necessary, we may assume that $\{t_{n}\}_{n\geq1}$ satisfies the conditions of remark \ref{zilie}. Denote $\CD_{b,t_{\infty}}^{\circ}\colonequals (\CD_{t_{\infty}}^{\circ})_{b}$.

Now choose a set $S_{0}\subseteq B(\CC)$ of Lebesgue measure 0 satisfying the following conditions:
\begin{enumerate}[label=(\arabic*)]
\item\label{item1} $S_{0}$ contains the set $S$ in Corollary \ref{location}, which is of Lebesgue measure 0.
\item\label{item2} $S_{0}$ contains all the points $b\in B(\CC)$ such that $(\CD_{\rm red})_{b}$ or $(\widetilde{\CR}_{\rm red})_{b}$ is non-reduced, which is finite by the generic reducedness  (cf. \cite[\href{https://stacks.math.columbia.edu/tag/054Z}{Lemma 054Z}, \href{https://stacks.math.columbia.edu/tag/020I}{Lemma 020I}, \href{https://stacks.math.columbia.edu/tag/0578}{Lemma 0578}]{stacks-project}).
\item\label{item3} $S_{0}$ contains all the points $b\in B(\CC)$ such that $\CD_{b,t_{\infty}}^{\circ}$ is not Zariski dense in $\CD_{b,t_{\infty}}$, which is also finite since $\CD_{t_{\infty}}^{\circ}$ is Zariski dense in $\CD_{t_{\infty}}$.
\item\label{item4} $S_{0}$ contains all the points $b\in B(\CC)$ such that either $\CA_{b}$ is not an abelian variety, or $\CD_{b,t_{\infty}}$ is not an ample divisor on $\CA_{b}\times\{t_{\infty}\}$, which is also finite by Corollary \ref{ample1}.
\item\label{item4.5} $S_{0}$ contains all the points $b\in B(\CC)$ such that $\CE'_{b,t_{\infty}}$ contains $\CA_{b}\times\{t_{\infty}\}$, which is also finite by the sentence after (\ref{CE'}).
\item\label{item5} $S_{0}$ contains the image of ${\rm Supp}(\widetilde{\CV}_{B})\subseteq\CX$ in $B$, which is also finite by the property of $\CV_{B}$ (see Lemma \ref{bigness}).
\item\label{item6} $S_{0}$ contains all the points $b\in B(\CC)$ such that $(Z_{1})_{b}$ is of codimension less or equal to 1 in $\CA_{b}\times\{t_{\infty}\}$, which is also finite since $Z_{1}$ is of codimension greater than 1 in $\CA\times\{t_{\infty}\}$.
\end{enumerate}
Now we use Theorem \ref{linear} to construct a limit entire curve. Take $\ell_{n}\to\infty$ such that $\ell^{-1}_{n}s_{n}$ converges to a nonzero element $s_{\infty}$ in $V(A,K)$. Let $G=G(s_{\infty})$ be the smallest abelian subvariety of $A$ such that $s_{\infty}$ lies in $G(K)_{\RR}$. Denote by $\CG$ the Zariski closure of $G$ in $\CA$.

Now choose any point $b\notin S_{0}$. By \ref{item4} above we have $\CD_{b,t_{\infty}}$ ample on the abelian variety $\CA_{b}\times\{t_{\infty}\}$, which implies that $(\CD_{b,t_{\infty}})_{\rm red}$ is also ample on $\CA_{b}\times\{t_{\infty}\}$ (see remark \ref{redample}). Consider the composition $(\CD_{b,t_{\infty}})_{\rm red}\hookrightarrow\CA_{b}\times\{t_{\infty}\}\twoheadrightarrow(\CA_{b}/\CG_{b})\times\{t_{\infty}\}$. The ampleness above implies that this is surjective. Thus the image of $\CD_{b,t_{\infty}}^{\circ}$ under this map is Zariski dense in $(\CA_{b}/\CG_{b})\times\{t_{\infty}\}$ by \ref{item3} above. Let $Z$ be the complement of $\CD_{t_{\infty}}^{\circ}$ in $\CD_{t_{\infty}}$. By Corollary \ref{location} and property \ref{item4.5}, \ref{item6} of $S_{0}$, we can choose a point $x\in(\CA_{b}\times\{t_{\infty}\})\backslash(\CD_{b,t_{\infty}}\cup\CE'_{b,t_{\infty}}\cup(Z_{1})_{b})$ satisfying the following conditions:
\begin{enumerate}[label=(\arabic*)]
\item\label{x1} $x$ is a limit point of $s_{n}(B)\times\{t_{n}\}$. More precisely, there exists a sequence $\{b_{n}\}_{n\geq1}\subseteq B(\CC)$ tending to $b$ such that $x$ is the limit of a subsequence of $\{(s_{n}(b_{n}),t_{n})\}_{n\geq1}$. After taking a subsequence, we can assume that $x$ is the limit of $\{(s_{n}(b_{n}),t_{n})\}_{n\geq1}$.
\item\label{x2} $(\CD_{b,t_{\infty}})_{\rm red}\cap (x+\CG_{b}\times\{t_{\infty}\})$ is a reduced divisor on $(x+\CG_{b}\times\{t_{\infty}\})$.
\item\label{x3} $Z_{b}\cap(x+\CG_{b}\times\{t_{\infty}\})$ has codimension greater than 1 in $(x+\CG_{b}\times\{t_{\infty}\})$.
\item\label{x4} $(Z_{1})_{b}\cap(x+\CG_{b}\times\{t_{\infty}\})$ has codimension greater than 1 in $(x+\CG_{b}\times\{t_{\infty}\})$.
\end{enumerate}

As $b\notin S_{0}$, by property \ref{item5} of $S_{0}$, we can take an open disc $\BD\subseteq B(\CC)$ with center $b$ such that $\BD$ does not intersect with the image of ${\rm Supp}(\widetilde{\CV}_{B})$ in $B$. After removing finitely many $n$, we may assume that $b_{n}\in\BD$ for all $n\geq1$. Apply Theorem \ref{linear} to $\{s_{n}\}_{n\geq1}$ and the limit point $x\in(\CA_{b}\times\{t_{\infty}\})\backslash (\CD_{b,t_{\infty}}\cup\CE'_{b,t_{\infty}}\cup (Z_{1})_{b})$ of $\{s_{n}(B)\}_{n\geq1}$. We obtain a linear entire curve $\varphi_{\infty}=\phi_{(x,\delta(s_{\infty},v_{b}))}\times\{t_{\infty}\}\colon \CC\to\CA_{b}\times\{t_{\infty}\}$, which is the unique limit of the re-parametrization $\{\varphi_{n}=\phi_{n}\times\{t_{n}\}\colon \BD_{r_{n}}\to\CA\times\{t_{n}\}\}_{n\geq1}$ of $\{(s_{n},t_{n})\colon B\to\CA\times\{t_{n}\}\}_{n\geq1}$. Explicitly, recall that $\varphi_{n}$ is defined by

$$
\varphi_{n}\colon \BD_{r_{n}}\to\CA\times\{t_{n}\},\quad\quad z\mapsto (s_{n}(b_{n}+\ell_{n}^{-1}z),t_{n}).
$$

\noindent Here $z$ is the standard coordinate of $\BD$ and the sum $b_{n}+\ell_{n}^{-1}z$ is taken in $\BD$. Recall also that $\{r_{n}\}_{n\geq1}$ is a sequence of positive numbers satisfying

$$
r_{n}/\ell_{n}<1-|b_{n}|,\quad r_{n}\to\infty.
$$

\noindent  Moreover, the Zariski closure of $\phi_{(x,\delta(s_{\infty},v_{b}))}(\CC)$ in $\CA_{b}\times\{t_{\infty}\}$ equals to $(x+\CG_{b}\times\{t_{\infty}\})$. Replacing $x$ by a small translation if necessary, we may assume that $\phi_{\infty}(\partial{\BD})\times\{t_{\infty}\}$ does not intersect with $\CD_{b,t_{\infty}}\cup\CE'_{b,t_{\infty}}$. Now let

$$
\psi_{n}\colon \BD_{r_{n}}\to\widetilde{\CX},\quad\quad z\mapsto \tilde{x}_{n}(b_{n}+\ell_{n}^{-1}z)
$$

\noindent be the lift of $\varphi_{n}$ on $\widetilde{\CX}$.

\medskip

Fix smooth Hermitian metrics on the line bundles
$$
\CO_{\widetilde{\CX}}(\widetilde{\CR}), \CO_{\CA\times_{\CC}T}(\CD_{\rm red}),\CO_{\widetilde{\CX}}(\widetilde{\CE}),\CO_{\widetilde{\CX}}(\widetilde{\CV}_{T}),\CO_{\widetilde{\CX}}(\widetilde{\CV}_{B}),\CO_{\CA\times_{\CC}T}(\CE').
$$
For line bundle $L=\CO_{\CA\times_{\CC}T}(\CD_{\rm red})$ or $L=\CO_{\CA\times_{\CC}T}(\CE')$ on $\CA\times_{\CC}T$ with the fixed metric $\left\|\cdot\right\|_{L}$ above, we choose the induced metric $(f\circ\pi)^{*}\left\|\cdot\right\|_{L}$ on $(f\circ\pi)^{*}L$. Moreover, for any $\ZZ$-linear combination of the line bundles above (including the two pullback line bundles), we can also fix a natural metric on this new line bundle induced by the metrics above.

For a line bundle $L$ which is a $\ZZ$-linear combination of the line bundles above (including the two pullback line bundles) with the fixed metric $\left\|\cdot\right\|_{L}$, a holomorphic map $\phi\colon \BD_{R}\to \widetilde{\CX}$ (or $\phi\colon \BD_{R}\to\CA\times_{\CC}T$, depending on $L$ is a line bundle on $\widetilde{\CX}$ or on $\CA\times_{\CC}T$), set
$$
T(r,\phi,L)\colonequals T(r,\phi,L,\left\|\cdot\right\|_{L}).
$$
If $F$ is an effective Cartier divisor associated to a section $s_{F}\in H^{0}(L)$, set $T(r,\phi,F)\colonequals T(r,\phi,L)$. Moreover, if $f(\BD_{R})\nsubseteq {\rm Supp}(F)$, set
$$
m(r,\phi,F)\colonequals m(r,\phi,F,\left\|\cdot\right\|_{L}).
$$
Finally, the above definitions can be extended by $\QQ$-linearity to any $\QQ$-linear combination of the line bundles above (including the two pullback line bundles).

Now fix an $0<\varepsilon<\frac{1}{2}$. We will finally take $\varepsilon$ small enough to get a contradiction. We also fix a smooth Hermitian metric on the line bundle $\CO_{\CA_{b}\times\{t_{\infty}\}}((\CD_{b,t_{\infty}})_{\rm red})$ on $\CA_{b}\times\{t_{\infty}\}$ and define the characteristic functions, proximity functions of $(\CD_{b,t_{\infty}})_{\rm red}$ to be the corresponding functions with respect to this metric.\medskip\medskip

\noindent\textbf{Estimations on the limit entire curve $\varphi_{\infty}$}\medskip

\noindent We first do some estimates on the limit entire curve $\varphi_{\infty}$. The main ingredient is  Theorem \ref{Nevanlinna}(1). The philosophy is that, $N^{(1)}$ serves as the main part of the characteristic function $T\approx N^{(1)}+(N-N^{(1)})+m$, while $N-N^{(1)}$ and $m$ are negligible compared to $N^{(1)}$. This corresponds to inequalities (\ref{3epsilon}) and (\ref{m}) below, which will be used respectively when estimating the upper bound and the lower bound of $N-N^{(1)}$.\medskip

By assumption \ref{x2} for $x$, $(\CD_{b,t_{\infty}})_{\rm red}\cap (x+\CG_{b}\times\{t_{\infty}\})$ is a reduced divisor on $(x+\CG_{b}\times\{t_{\infty}\})$. Using Theorem \ref{Nevanlinna}(1) and the ampleness of $\CD_{b,t_{\infty}}$ on $\CA_{b}\times\{t_{\infty}\}$ (assumption \ref{item4} for $S_{0}$), we see that there exists a subset $\Sigma_{\varepsilon}\subseteq \RR_{>0}$ of finite Lebesgue measure such that for any $r\in\RR_{>0}\backslash\Sigma_{\varepsilon}$,
\begin{align}\label{sigma}
 T(r,\varphi_{\infty},(\CD_{b,t_{\infty}})_{\rm red})\leq N^{(1)}(r,\varphi_{\infty},(\CD_{b,t_{\infty}})_{\rm red})+\varepsilon T(r,\varphi_{\infty},(\CD_{b,t_{\infty}})_{\rm red}).
\end{align}
As $\varphi_{\infty}(\CC)$ contains $x$, it is not contained in $\CD$. Thus by Theorem \ref{fmt},
\begin{align}\label{fmtyy}
T(r,\varphi_{\infty},(\CD_{b,t_{\infty}})_{\rm red})= N(r,\varphi_{\infty},(\CD_{b,t_{\infty}})_{\rm red})+m(r,\varphi_{\infty},(\CD_{b,t_{\infty}})_{\rm red})-m(1,\varphi_{\infty},(\CD_{b,t_{\infty}})_{\rm red}).
\end{align}
Since $m(r,\varphi_{\infty},(\CD_{b,t_{\infty}})_{\rm red})$ is bounded below for all $r>0$, $m(1,\varphi_{\infty},(\CD_{b,t_{\infty}})_{\rm red})$ is a constant, we see that there is a constant $c_{2}>0$ such that
\begin{align}\label{c2}
 T(r,\varphi_{\infty},(\CD_{b,t_{\infty}})_{\rm red})\geq N(r,\varphi_{\infty},(\CD_{b,t_{\infty}})_{\rm red})-c_{2}
\end{align}
Therefore for any fixed $\varepsilon>0$ and any $r\in\RR_{>0}\backslash\Sigma_{\varepsilon}$, combine (\ref{sigma}) and (\ref{c2}), we get
\begin{align*}
N(r,\varphi_{\infty},(\CD_{b,t_{\infty}})_{\rm red})-N^{(1)}(r,\varphi_{\infty},(\CD_{b,t_{\infty}})_{\rm red})\leq c_{2}+\frac{\varepsilon}{1-\varepsilon}N^{(1)}(r,\varphi_{\infty},(\CD_{b,t_{\infty}})_{\rm red}).
 \end{align*}
Moreover, for fix $0<\varepsilon<\frac{1}{2}$ above, $\frac{\varepsilon}{1-\varepsilon}<2\varepsilon$ and $N^{(1)}(r,\varphi_{\infty},(\CD_{b,t_{\infty}})_{\rm red}) \geq\frac{c_{2}}{\varepsilon}$ when $r$ large enough. Thus by enlarging $\Sigma_{\varepsilon}$ we may assume that for all $r\in\RR_{>0}\backslash\Sigma_{\varepsilon}$,
\begin{align}\label{3epsilon}
N(r,\varphi_{\infty},(\CD_{b,t_{\infty}})_{\rm red})-N^{(1)}(r,\varphi_{\infty},(\CD_{b,t_{\infty}})_{\rm red})\leq 3\varepsilon N^{(1)}(r,\varphi_{\infty},(\CD_{b,t_{\infty}})_{\rm red}).
\end{align}
On the other hand, combine (\ref{sigma}) and (\ref{fmtyy}), we have for all $r\in\RR_{>0}\backslash\Sigma_{\varepsilon}$,
\begin{align}
m(r,\varphi_{\infty},(\CD_{b,t_{\infty}})_{\rm red})&=T(r,\varphi_{\infty},(\CD_{b,t_{\infty}})_{\rm red})-N(r,\varphi_{\infty},(\CD_{b,t_{\infty}})_{\rm red})+m(1,\varphi_{\infty},(\CD_{b,t_{\infty}})_{\rm red})\notag\\
&\leq N^{(1)}(r,\varphi_{\infty},(\CD_{b,t_{\infty}})_{\rm red})+\varepsilon T(r,\varphi_{\infty},(\CD_{b,t_{\infty}})_{\rm red})-N(r,\varphi_{\infty},(\CD_{b,t_{\infty}})_{\rm red})\notag\\
&+m(1,\varphi_{\infty},(\CD_{b,t_{\infty}})_{\rm red})\notag\\
&\leq \varepsilon T(r,\varphi_{\infty},(\CD_{b,t_{\infty}})_{\rm red})+m(1,\varphi_{\infty},(\CD_{b,t_{\infty}})_{\rm red})\notag
\end{align}
For fixed $\varepsilon$, when $r$ large enough, we have $\varepsilon T(r,\varphi_{\infty},(\CD_{b,t_{\infty}})_{\rm red})\geq m(1,\varphi_{\infty},(\CD_{b,t_{\infty}})_{\rm red})$. Thus by enlarging $\Sigma_{\varepsilon}$, we can assume that
\begin{align}\label{m}
m(r,\varphi_{\infty},(\CD_{b,t_{\infty}})_{\rm red})\leq 2\varepsilon T(r,\varphi_{\infty},(\CD_{b,t_{\infty}})_{\rm red})
\end{align}
holds for all $r\in\RR_{>0}\backslash\Sigma_{\varepsilon}$.\medskip\medskip

\noindent\textbf{Upper bound of $N(r,\varphi_{n},\CD_{\rm red})-N^{(1)}(r,\varphi_{n},\CD_{\rm red})$}\medskip

\noindent Now we establish the main lemma in this subsection, which gives an upper bound of $N(r,\varphi_{n},\CD_{\rm red})-N^{(1)}(r,\varphi_{n},\CD_{\rm red})$. This can also be understood as a lower bound for the transversal intersection numbers of $\varphi_{n}$ and $\CD_{\rm red}$.

\begin{lemma}\label{subtle}
For fixed $\varepsilon>0$, we can enlarge the set $\Sigma_{\varepsilon}$ of finite Lebesgue measure above such that for any $r\in\RR_{>0}\backslash\Sigma_{\varepsilon}$, the inequality
\begin{align}\label{upperbound}
N(r,\varphi_{n},\CD_{\rm red})-N^{(1)}(r,\varphi_{n},\CD_{\rm red})\leq\varepsilon T(r,\varphi_{\infty},(\CD_{b,t_{\infty}})_{\rm red})
\end{align}
holds for $n$ large enough $($depending on $\varepsilon$ and $r)$.
\end{lemma}

The key to the proof of Lemma \ref{subtle} is the inequality (\ref{3epsilon}) together with two limiting arguments: one is Lemma \ref{key}, and the other adapts the argument of Lemma \ref{limits}. The philosophy is that, if the limit entire curve $\varphi_{\infty}$ has many transversal intersections with $(\CD_{b,t_{\infty}})_{\rm red}$, then each $\varphi_{n}$ also has many transversal intersections with $\CD_{\rm red}$.\medskip\medskip

The first limiting argument is contained in the following lemma. Recall that $\CD_{t_{\infty}}^{\circ}\subseteq \CD_{t_{\infty}}$ is a Zariski dense open subset of $\CD_{t_{\infty}}$ satisfying the conditions in Lemma \ref{key} (applying to $\CD\hookrightarrow\CA\times_{\CC}T\to T$ and the sequence $\{t_{n}\}_{n\geq1}$). By property \ref{item3} of the set $S_{0}\subseteq B(\CC)$, we see that $\CD_{b,t_{\infty}}^{\circ}$ is a Zariski dense open subset of $\CD_{b,t_{\infty}}$.

\begin{lemma}\label{sublemma}
For each $z\in\CC$, if $\varphi_{\infty}(z)\in \CD_{b,t_{\infty}}^{\circ}$ and ${\rm ord}_{z}\varphi_{\infty}^{*}(\CD_{b,t_{\infty}})_{\rm red}=1$, then there is an open neighbourhood $V_{z}$ of $z$ such that for $n$ large enough, ${\rm ord}_{z'}\varphi_{n}^{*}\CD_{\rm red}\leq 1$ for all $z'\in V_{z}$.
\end{lemma}
\begin{proof}
Denote $y=\varphi_{\infty}(z)\in \CD_{b,t_{\infty}}^{\circ}$. We first prove that there is an open neighbourhood $U_{y}$ of $y$ in $\CA\times_{\CC}T$ such that for all $n$ large enough and $|z'|< 2|z|$, if $\varphi_{n}(z')\in U_{y}$, then ${\rm ord}_{z'}\varphi_{n}^{*}\CD_{\rm red}\leq 1$.

In fact, if this is not the case, then there is a $z_{n}\in\BD_{2|z|}$ such that $\varphi_{n}(z_{n})$ tends to $y$ and ${\rm ord}_{z_{n}}\varphi_{n}^{*}\CD_{\rm red}>1$ for all sufficiently large $n$. Denote $b'_{n}=b_{n}+\ell_{n}^{-1}z_{n}\in B(\CC)$ and $r'_{n}=r_{n}-|z_{n}|$. Then $y=\varphi_{\infty}(z)$ is the limit point of $(s_{n}(b'_{n}),t_{n})\in\CA\times\{t_{n}\}$. Consider the re-parametrization
$$
\varphi'_{n}\colon \BD_{r'_{n}}\to\CA\times\{t_{n}\},\quad\quad z\mapsto (s_{n}(b'_{n}+\ell_{n}^{-1}z),t_{n}).
$$
Note that $r'_{n}$ still tends to infinity as $|z_{n}|\leq 2|z|$ is bounded. By Theorem \ref{linear}, $\varphi'_{n}$ converges (uniformly on any compact subset of $\CC$) to a linear entire curve $\varphi'_{\infty}$ on $\CA_{b}\times\{t_{\infty}\}$ which is a translation (from $x$ to $y$) of $\varphi_{\infty}$.

The assumption ${\rm ord}_{w_{n}}\varphi_{n}^{*}\CD_{\rm red}>1$ means that $({\rm d}\varphi'_{n})_{0}(v_{\rm st})\in T_{\varphi'_{n}(0)}\CD_{\rm red}$ for all $n$ (large enough). Here $v_{\rm st}=\frac{\rm d}{{\rm d}z}$ is the tangent vector of $\BD_{r'_{n}}$ at 0 under the standard coordinate $z$. By the uniform converging property, $({\rm d}\varphi'_{n})_{0}(v_{\rm st})$ tends to $({\rm d}\varphi'_{\infty})_{0}(v_{\rm st})$. Since $y\in \CD_{b,t_{\infty}}^{\circ}$, by Lemma \ref{key}, we see that $({\rm d}\varphi'_{\infty})_{0}(v_{\rm st})\in T_{\varphi'_{\infty}(0)}(\CD_{b,t_{\infty}})_{\rm red}$. Or equivalently, $({\rm d}\varphi_{\infty})_{z}(v_{\rm st})\in T_{\varphi_{\infty}(z)}(\CD_{b,t_{\infty}})_{\rm red}$, which contradicts to the assumption that ${\rm ord}_{z}\varphi_{\infty}^{*}(\CD_{b,t_{\infty}})_{\rm red}=1$.

Now take a small neighbourhood $V_{z}$ of $z$ contained in $\BD_{2|z|}$ such that $\varphi_{\infty}(V_{z})\subseteq U_{y}$. Use the uniformly converge again, we see that for $n$ large enough, $\varphi_{n}(V_{z})\subseteq U_{y}$. Therefore by the argument above, ${\rm ord}_{z'}\varphi_{n}^{*}\CD_{\rm red}\leq 1$ holds for all $z'\in V_{z}$ and sufficiently large $n$. This finishes the proof of the lemma.
\end{proof}

Now we are ready to prove Lemma \ref{subtle}. The argument is similar to that of Lemma \ref{limits}, with slightly more elaborate estimates.

\begin{proof}[Proof of Lemma $\ref{subtle}$]
For fixed $r>0$ such that $\varphi_{\infty}(\partial\BD_{r})\cap\CD=\emptyset$ (we can enlarge $\Sigma_{\varepsilon}$ so that each $r\in\RR_{>0}\backslash\Sigma_{\varepsilon}$ satisfies this property), choose an open neighbourhood $W_{z}$ of $z$ (depending on $r$) for each $z\in\overline{\BD}_{r}$ satisfying the following properties:
\begin{enumerate}[label=(\arabic*)]
\item\label{V1} For any $z$ and any $z'\in W_{z}\backslash\{z\}$, we have $\varphi_{\infty}(z')\notin \CD$.
\item\label{V2} If $\varphi_{\infty}(z)\in \CD$, $|z|>1$ and $z'\in W_{z}$, then $|z'|>1$ and
$$
  \log r-\log|z'| \leq 2(\log r-\log|z|)
$$
\item\label{V2.5} If $|z|<1$, then $W_{z}\subseteq \BD_{1}$.
\item\label{V3} If $\varphi_{\infty}(z)\in \CD_{b,t_{\infty}}^{\circ}$ and ${\rm ord}_{z}\varphi_{\infty}^{*}(\CD_{b,t_{\infty}})_{\rm red}=1$, then $W_{z}\subseteq V_{z}$, where $V_{z}$ is in Lemma \ref{sublemma}.
\end{enumerate}
As $\overline{\BD}_{r}$ is compact, there is a finite number of $W_{z}$ covering it. Explicitly, assume $\overline{\BD}_{r}=\cup_{j=1}^{m}W_{z_{j}}$, where $m$ is a positive integer and $z_{j}\in \BD_{r}$.

By definition, we have
\begin{align}\label{bydefinition}
N(r,\varphi_{n},\CD_{\rm red})-N^{(1)}(r,\varphi_{n},\CD_{\rm red})&=\int_{1}^{r}\sum\limits_{z\in\BD_{t}}({\rm ord}_{z}\varphi_{n}^{*}\CD_{\rm red}-\min\{1,{\rm ord}_{z}\varphi_{n}^{*}\CD_{\rm red}\})\frac{{\rm d}t}{t} \notag \\
&=\sum\limits_{z\in\BD_{1}}({\rm ord}_{z}\varphi_{n}^{*}\CD_{\rm red}-\min\{1,{\rm ord}_{z}\varphi_{n}^{*}\CD_{\rm red}\})\log r \notag \\
&+\sum\limits_{1\leq|z|<r}({\rm ord}_{z}\varphi_{n}^{*}\CD_{\rm red}-\min\{1,{\rm ord}_{z}\varphi_{n}^{*}\CD_{\rm red}\})(\log r-\log|z|)\notag\\
&=\sum\limits_{z\in\BD_{r}}({\rm ord}_{z}\varphi_{n}^{*}\CD_{\rm red}-\min\{1,{\rm ord}_{z}\varphi_{n}^{*}\CD_{\rm red}\})\cdot\notag\\
&\min\{\log r,\log r-\log\left|z\right|\}
\end{align}
For $1\leq j\leq m$, define
\begin{align*}
S_{j}\colonequals \sum\limits_{z\in\BD_{r}\cap W_{z_{j}}}({\rm ord}_{z}\varphi_{n}^{*}\CD_{\rm red}-\min\{1,{\rm ord}_{z}\varphi_{n}^{*}\CD_{\rm red}\})\min\{\log r,\log r-\log\left|z\right|\}
\end{align*}

By Rouch\'{e}'s theorem and assumption \ref{V1}, we know that for $n$ large enough and $z\in\BD_{r}$, if $\varphi_{n}(z)\in\CD$, then $z$ lies in exactly one $W_{z_{j}}$. In this case we have (by (\ref{bydefinition}))
\begin{align}\label{sigmaj}
N(r,\varphi_{n},\CD_{\rm red})-N^{(1)}(r,\varphi_{n},\CD_{\rm red})&=\sum\limits_{1\leq j\leq m}S_{j}.
\end{align}

By Rouch\'{e}'s theorem again, for $n$ large enough, we have\\
$\sum_{z\in W_{z_{j}}}{\rm ord}_{z}\varphi_{n}^{*}\CD_{\rm red}=\sum_{z\in W_{z_{j}}}{\rm ord}_{z}\varphi_{\infty}^{*}\CD_{\rm red}$  for each $1\leq j\leq m$. By assumption \ref{V1} for $W_{z}$, we see that $\sum_{z\in W_{z_{j}}}{\rm ord}_{z}\varphi_{\infty}^{*}\CD_{\rm red}={\rm ord}_{z_{j}}\varphi_{\infty}^{*}\CD_{\rm red}$.

Now by our assumption $\varphi_{\infty}(\partial\BD_{1})\cap\CD=\emptyset$, we see that if $|z_{j}|=1$, then $S_{j}=0$ for $n$ large enough. If $|z_{j}|\neq1$, by assumption \ref{V2} and \ref{V2.5}, we have
\begin{align}\label{inanycase}
S_{j}&=\sum\limits_{z\in W_{z_{j}}}({\rm ord}_{z}\varphi_{n}^{*}\CD_{\rm red}-\min\{1,{\rm ord}_{z}\varphi_{n}^{*}\CD_{\rm red}\})\min\{\log r,\log r-\log\left|z\right|\}\notag\\
&\leq 2\sum\limits_{z\in W_{z_{j}}}({\rm ord}_{z}\varphi_{n}^{*}\CD_{\rm red})\min\{\log r,\log r-\log\left|z_{j}\right|\}\notag\\
&=2({\rm ord}_{z_{j}}\varphi_{\infty}^{*}\CD_{\rm red})\min\{\log r,\log r-\log\left|z_{j}\right|\}
\end{align}

Now for $n$ large enough and every $1\leq j\leq m$, if $\varphi_{\infty}(z_{j})\in \CD_{b,t_{\infty}}^{\circ}$  and ${\rm ord}_{z_{j}}\varphi_{\infty}^{*}(\CD_{b,t_{\infty}})_{\rm red}=1$, we see from Lemma \ref{sublemma} and assumption \ref{V3} that ${\rm ord}_{z}\varphi_{n}^{*}\CD_{\rm red}-\min\{1,{\rm ord}_{z}\varphi_{n}^{*}\CD_{\rm red}\}=0$ for all $z\in W_{z_{j}}$. Thus we have $S_{j}=0$. If ${\rm ord}_{z_{j}}\varphi_{\infty}^{*}(\CD_{b,t_{\infty}})_{\rm red}=0$, then ${\rm ord}_{z_{j}}\varphi_{\infty}^{*}\CD_{\rm red}=0$ so by the previous paragraph we also get $S_{j}=0$ for $n$ large enough.

Let $J_{1}$ be the set of all $j$ satisfying ${\rm ord}_{z_{j}}\varphi_{\infty}^{*}(\CD_{b,t_{\infty}})_{\rm red}>1$, and let $J_{2}$ be the set of all $j$ satisfying ${\rm ord}_{z_{j}}\varphi_{\infty}^{*}(\CD_{b,t_{\infty}})_{\rm red}=1$ and $\varphi_{\infty}(z_{j})\in\CD_{t_{\infty}}\backslash \CD_{t_{\infty}}^{\circ}=Z$. Then the argument in the previous paragraph shows that
\begin{align}\label{J}
\sum\limits_{1\leq j\leq m}S_{j}=\sum\limits_{j\in J_{1}}S_{j}+\sum\limits_{j\in J_{2}}S_{j}.
\end{align}
for $n$ large enough.

Choose a positive integer $C>0$ such that $(\CD_{\rm red})_{b,t_{\infty}}\subseteq C(\CD_{b,t_{\infty}})_{\rm red}$. Then we have\\
${\rm ord}_{z}\varphi_{\infty}^{*}\CD_{\rm red}\leq C{\rm ord}_{z}\varphi_{\infty}^{*}(\CD_{b,t_{\infty}})_{\rm red}$. Moreover, if $z=z_{j}$ for some $j\in J_{2}$, then\\
${\rm ord}_{z}\varphi_{\infty}^{*}\CD_{\rm red}\leq C{\rm ord}_{z}\varphi_{\infty}^{*}(\CD_{b,t_{\infty}})_{\rm red}=C\leq C{\rm ord}_{z}\varphi_{\infty}^{*}Z$. Hence by (\ref{inanycase}) we get
\begin{align}\label{2C4}
\sum\limits_{j\in J_{1}}S_{j}+\sum\limits_{j\in J_{2}}S_{j}&\leq 2C\sum\limits_{j\in J_{1}}({\rm ord}_{z_{j}}\varphi_{\infty}^{*}(\CD_{b,t_{\infty}})_{\rm red})\min\{\log r,\log r-\log|z_{j}|\}\notag\\
&+2C\sum\limits_{j\in J_{2}}({\rm ord}_{z_{j}}\varphi_{\infty}^{*}Z)\min\{\log r,\log r-\log|z_{j}|\}\notag\\
&\leq 2C\left(2(N(r,\varphi_{\infty},(\CD_{b,t_{\infty}})_{\rm red})-N^{(1)}(r,\varphi_{\infty},(\CD_{b,t_{\infty}})_{\rm red}))+N(r,\varphi_{\infty},Z)\right)
\end{align}
for $n$ large enough. Here we used the fact that ${\rm ord}_{z_{j}}\varphi_{\infty}^{*}(\CD_{b,t_{\infty}})_{\rm red}\leq 2({\rm ord}_{z_{j}}\varphi_{\infty}^{*}(\CD_{b,t_{\infty}})_{\rm red}-\min\{1,{\rm ord}_{z_{j}}\varphi_{\infty}^{*}(\CD_{b,t_{\infty}})_{\rm red}\})$ for $j\in J_{1}$.

By (\ref{3epsilon}), with $\varepsilon$ replaced by $\frac{\varepsilon}{48C}$, we get
$$
N(r,\varphi_{\infty},(\CD_{b,t_{\infty}})_{\rm red})-N^{(1)}(r,\varphi_{\infty},(\CD_{b,t_{\infty}})_{\rm red})\leq \frac{\varepsilon}{16C}N^{(1)}(r,\varphi_{\infty},(\CD_{b,t_{\infty}})_{\rm red})
$$
holds for all $r>0$ outside a set of finite Lebesgue measure. Enlarge $\Sigma_{\varepsilon}$ if necessary, we may assume that this holds for all $r\in\RR_{>0}\backslash\Sigma_{\varepsilon}$.

By Theorem \ref{fmt} we easily see that $N(r,\varphi_{\infty},(\CD_{b,t_{\infty}})_{\rm red})\leq 2T(r,\varphi_{\infty},(\CD_{b,t_{\infty}})_{\rm red})$ for $r$ sufficiently large. Thus we can enlarge $\Sigma_{\varepsilon}$ so that this holds for all $r\in\RR_{>0}\backslash\Sigma_{\varepsilon}$. Therefore we get
\begin{align}\label{16}
N(r,\varphi_{\infty},(\CD_{b,t_{\infty}})_{\rm red})-N^{(1)}(r,\varphi_{\infty},(\CD_{b,t_{\infty}})_{\rm red})\leq \frac{\varepsilon}{8C}T(r,\varphi_{\infty},(\CD_{b,t_{\infty}})_{\rm red}).
\end{align}

Denote $W=Z_{b}\cap(x+\CG_{b}\times\{t_{\infty}\})$. Then by our choice of $x$ (see property \ref{x3}), $W$ has codimension at least 2 in $(x+\CG_{b}\times\{t_{\infty}\})$. By Theorem \ref{Nevanlinna}(2), we may enlarge $\Sigma_{\varepsilon}$ so that for any $r\in\RR_{>0}\backslash\Sigma_{\varepsilon}$,
\begin{align*}
N(r,\varphi_{\infty},W)&\leq \frac{\varepsilon}{4C} T(r,\varphi_{\infty},(x+\CG_{b}\times\{t_{\infty}\})\cap(\CD_{b,t_{\infty}})_{\rm red}) \notag\\
&=\frac{\varepsilon}{4C} T(r,\varphi_{\infty},(\CD_{b,t_{\infty}})_{\rm red}).
\end{align*}
Thus we have $N(r,\varphi_{\infty},Z)=N(r,\varphi_{\infty},W)\leq \frac{\varepsilon}{4C} T(r,\varphi_{\infty},(\CD_{b,t_{\infty}})_{\rm red})$. Combine this with (\ref{sigmaj}), (\ref{J}), (\ref{2C4}), (\ref{16}), we get
$$
N(r,\varphi_{n},\CD_{\rm red})-N^{(1)}(r,\varphi_{n},\CD_{\rm red})\leq \varepsilon T(r,\varphi_{\infty},(\CD_{b,t_{\infty}})_{\rm red}).
$$
for any $r\in\RR_{>0}\backslash\Sigma_{\varepsilon}$ and $n$ large enough (depending on $r$). This finishes the proof of Lemma \ref{subtle}.
\end{proof}

\subsection{Step 3: upper bound of transversal intersections}\label{5.5}
On the other hand, we can estimate a lower bound for $N(r,\varphi_{n},\CD_{\rm red})-N^{(1)}(r,\varphi_{n},\CD_{\rm red})$ (or equivalently, an upper bound for the transversal intersection numbers of $\varphi_{n}$ with $\CD_{\rm red}$). The precise estimate proceeds as follows.

\begin{lemma}\label{xiajie-}
There is a constant $c>0$ and a set $\Sigma\subseteq\RR_{>0}$ of finite Lebesgue measure such that for all $r\in\RR_{>0}\backslash\Sigma$, the inequality
\begin{align}\label{xiajie}
N(r,\varphi_{n},\CD_{\rm red})-N^{(1)}(r,\varphi_{n},\CD_{\rm red})\geq c\cdot T(r,\varphi_{\infty},(\CD_{b,t_{\infty}})_{\rm red})
\end{align}
holds for $n$ large enough $($depending on $r)$.
\end{lemma}

Proposition \ref{generic1} follows immediately from Lemma \ref{subtle} and Lemma \ref{xiajie-}. In fact, take $0<\varepsilon<\min\{\frac{1}{2},c\}$. Let $\Sigma_{\varepsilon}$ be the set of finite Lebesgue measure in Lemma \ref{subtle}. Take $r\in\RR_{>0}\backslash (\Sigma_{\varepsilon}\cup\Sigma)$ large enough such that $T(r,\varphi_{\infty},(\CD_{b,t_{\infty}})_{\rm red})>0$. Then for $n$ large enough (depending on $\varepsilon$ and $r$) we get a contradiction from (\ref{upperbound}) and (\ref{xiajie}). Therefore, it remains to prove Lemma \ref{xiajie-}.

The proof of Lemma \ref{xiajie-} consists of three parts. The first part is to obtain a lower bound for $N(r,\psi_{n},\widetilde{\CR})$ in terms of $T(r,\varphi_{\infty},(\CD_{b,t_{\infty}})_{\rm red})$ (see (\ref{12}) below), which essentially relies on the positivity of $R$ (Lemma \ref{bigness} in \S\ref{5.3}). The second part uses a tangency observation mentioned in \S\ref{idea} to derive a lower bound of $N(r,\varphi_{n},\CD_{\rm red})-N^{(1)}(r,\varphi_{n},\CD_{\rm red})$ in terms of $N(r,\psi_{n},\widetilde{\CR}_{0})$ (see Corollary \ref{c5-} below), here we recall that $\widetilde{\CR}_{0}$ is the union (counted with multiplicities) of all irreducible components of $\widetilde{\CR}$ which dominates an irreducible component of $\widetilde{\CR}$. We further recall that $\widetilde{\CR}=\widetilde{\CR}_{0}+\widetilde{\CR}_{1}$, and thus $N(r,\psi_{n},\widetilde{\CR})=N(r,\psi_{n},\widetilde{\CR}_{0})+N(r,\psi_{n},\widetilde{\CR}_{1})$. The last part is to estimate the remaining term $N(r,\psi_{n},\widetilde{\CR}_{1})$. We shall show that it is actually negligible (see Lemma \ref{NR1-} below), due to the fact that the image of $\widetilde{\CR}_{1}$ in $\CX$ has codimension greater than 1.\medskip\medskip

\noindent\textbf{Lower bound of $N(r,\psi_{n},\widetilde{\CR})$}\medskip

\noindent Now we estimate a lower bound of $N(r,\psi_{n},\widetilde{\CR})$. By (\ref{xianxingdengjia}) and Lemma \ref{property}, we have
\begin{align*}
T(r,\psi_{n},\widetilde{\CR}-c_{1}(f\circ\pi)^{*}\CD_{\rm red})&=T(r,\psi_{n},\widetilde{\CE}-\widetilde{\CV}_{T}-\widetilde{\CV}_{B})+O(1)\\
&=T(r,\psi_{n},\widetilde{\CE})-T(r,\psi_{n},\widetilde{\CV}_{T}+\widetilde{\CV}_{B})+O(1)\\
&=m(r,\psi_{n},\widetilde{\CE})+N(r,\psi_{n},\widetilde{\CE})-m(1,\psi_{n},\widetilde{\CE})-m(r,\psi_{n},\widetilde{\CV}_{T}+\widetilde{\CV}_{B})\\&- N(r,\psi_{n}, \widetilde{\CV}_{T}+\widetilde{\CV}_{B})+m(1,\psi_{n},\widetilde{\CV}_{T}+\widetilde{\CV}_{B})+O(1).
\end{align*}
Here the first two equalities follow from Lemma \ref{property}(1). In particular, the term $O(1)$ is independent of $r$ and $n$. The third equality follows from Theorem \ref{fmt}. Now $N(r,\psi_{n},\widetilde{\CE})\geq0$ and $m(r,\psi_{n},\widetilde{\CE}), m(1,\psi_{n},\widetilde{\CV}_{T}+\widetilde{\CV}_{B})$ are uniformly (for all $n$ and $r$) bounded below. By our choice of $\widetilde{\CV}_{T},\widetilde{\CV}_{B}$ (see Lemma \ref{bigness}), we have $N(r,\psi_{n}, \widetilde{\CV}_{T}+\widetilde{\CV}_{B})=0$ for all $r,n$ and $m(r,\psi_{n},\widetilde{\CV}_{T}+\widetilde{\CV}_{B})$ is uniformly (for all $n$ and $r$) bounded above. Finally by (\ref{CE'}),
$$
m(1,\psi_{n},\widetilde{\CE})\leq m(1,\varphi_{n},\CE')+O(1)
$$
Here the term $O(1)$ is independent of $n$. As $\varphi_{\infty}(\partial{\BD})$ does not intersect $\CE'$, by Lemma \ref{limits}, $m(1,\varphi_{n},\CE')$ converges to $m(1,\varphi_{\infty},\CE')$ as $n\to\infty$. Hence the term $m(1,\varphi_{n},\CE')$ is also bounded. In conclusion, we have
$$
T(r,\psi_{n},\widetilde{\CR}-c_{1}(f\circ\pi)^{*}\CD_{\rm red})\geq O(1)
$$
Here $O(1)$ is independent of $r$ and $n$.

Thus we have
\begin{align*}
m(r,\psi_{n},\widetilde{\CR})+N(r,\psi_{n},\widetilde{\CR})
&=T(r,\psi_{n},\widetilde{\CR})+m(1,\psi_{n},\widetilde{\CR})\notag\\
&\geq c_{1}T(r,\psi_{n},(f\circ\pi)^{*}\CD_{\rm red})+O(1)\notag\\
&= c_{1}T(r,\varphi_{n},\CD_{\rm red})+O(1)
\end{align*}
for all $r,n$. Here $O(1)$ is independent of $r$ and $n$. Now for all sufficiently large $r>0$, $T(r,\varphi_{\infty},\CD_{\rm red})>0$. Thus from Lemma \ref{limits} we see that $T(r,\varphi_{n},\CD_{\rm red})\geq\frac{1}{2}T(r,\varphi_{\infty},\CD_{\rm red})$ for $n$ large enough (depending on $r$). Moreover, we have $T(r,\varphi_{\infty},\CD_{\rm red})=T(r,\varphi_{\infty},(\CD_{\rm red})_{b,t_{\infty}})\geq T(r,\varphi_{\infty},(\CD_{b,t_{\infty}})_{\rm red})+O(1)$ where $O(1)$ is independent of $r$ and $n$. Combine all of these we see that for sufficiently large $r>0$,
\begin{align}\label{mN}
m(r,\psi_{n},\widetilde{\CR})+N(r,\psi_{n},\widetilde{\CR})\geq \frac{c_{1}}{2}T(r,\varphi_{\infty},(\CD_{b,t_{\infty}})_{\rm red})+O(1).
\end{align}
holds for $n$ large enough (depending on $r$).

Now by Lemma \ref{exactly} and the definition $\widetilde{R}=\pi_{\rm can}^{*}R_{\rm can}$ we see that $\widetilde{\CR}\subseteq c_{3}(f\circ\pi)^{*}\CD_{\rm red}$ for some positive integer $c_{3}$. Therefore we have
\begin{align}\label{c3}
m(r,\psi_{n},\widetilde{\CR})\leq c_{3}\cdot m(r,\varphi_{n},\CD_{\rm red})+O(1)
\end{align}
Here the term $O(1)$ is independent of $r$ and $n$.

As $\varphi_{\infty}(\CC)$ contains $x$, it is not contained in $\CD$. Thus by Lemma \ref{limits}, we see that $m(r,\varphi_{n},\CD_{\rm red})$ tends to $m(r,\varphi_{\infty},\CD_{\rm red})$ as $n\to\infty$. Hence for fixed $r>0$, $m(r,\varphi_{n},\CD_{\rm red})\leq m(r,\varphi_{\infty},\CD_{\rm red})+1$ for $n$ large enough (depending on $r$). Now choose a positive integer $c_{4}$ such that $(\CD_{\rm red})_{b,t_{\infty}}\subseteq c_{4}(\CD_{b,t_{\infty}})_{\rm red}$. Then for all $r>0$ and $n$ large enough (depending on $r$),
\begin{align}\label{c4}
m(r,\varphi_{n},\CD_{\rm red})&\leq m(r,\varphi_{\infty},\CD_{\rm red})+1\notag\\
&=m(r,\varphi_{\infty},(\CD_{\rm red})_{b,t_{\infty}})+1\notag\\
&\leq c_{4}m(r,\varphi_{\infty},(\CD_{b,t_{\infty}})_{\rm red})+O(1)
\end{align}
Here the term $O(1)$ is independent of $r$ and $n$. Now for any $0<\varepsilon<\frac{1}{2}$, combine (\ref{m}), (\ref{c3}), (\ref{c4}), we have for all $r\in\RR_{>0}\backslash\Sigma_{\varepsilon}$ and $n$ large enough (depending on $\varepsilon$ and $r$),
\begin{align}\label{c3c4}
m(r,\psi_{n},\widetilde{\CR})&\leq c_{3}c_{4}m(r,\varphi_{\infty},(\CD_{b,t_{\infty}})_{\rm red})+O(1)\notag\\
&\leq 2\varepsilon c_{3}c_{4}T(r,\varphi_{\infty},(\CD_{b,t_{\infty}})_{\rm red})+O(1)
\end{align}
Here the term $O(1)$ depends only on $\varepsilon$. In particular, it is independent of $r$ and $n$. Finally combine (\ref{mN}) and (\ref{c3c4}), we get for all $r\in\RR_{>0}\backslash\Sigma_{\varepsilon}$ and $n$ large enough (depending on $\varepsilon$ and $r$),
\begin{align}\label{O1}
N(r,\psi_{n},\widetilde{\CR})\geq \left(\frac{c_{1}}{2}-2\varepsilon c_{3}c_{4}\right)T(r,\varphi_{\infty},(\CD_{b,t_{\infty}})_{\rm red})-O(1)
\end{align}
Here the term $O(1)$ is independent of $r,n$. For fixed $0<\varepsilon<\frac{c_{1}}{2c_{3}c_{4}}$, when $r\in\RR_{>0}\backslash\Sigma_{\varepsilon}$ large enough, the term $O(1)$ in (\ref{O1}) is less than $\frac{1}{2}\left(\frac{c_{1}}{2}-2\varepsilon c_{3}c_{4}\right)T(r,\varphi_{\infty},(\CD_{b,t_{\infty}})_{\rm red})$. Thus we have
\begin{align}\label{12}
N(r,\psi_{n},\widetilde{\CR})\geq \frac{1}{2}\left(\frac{c_{1}}{2}-2\varepsilon c_{3}c_{4}\right)T(r,\varphi_{\infty},(\CD_{b,t_{\infty}})_{\rm red})
\end{align}
for $n$ large enough (depending on $\varepsilon$ and $r$).\medskip\medskip

\noindent\textbf{Tangency and a upper bound of $N(r,\psi_{n},\widetilde{\CR}_{0})$}\medskip

\noindent Next we establish a relation between $N(r,\psi_{n},\widetilde{\CR}_{0})$ and $N(r,\varphi_{n},\CD_{\rm red})-N^{(1)}(r,\varphi_{n},\CD_{\rm red})$, the main result is Corollary \ref{c5-}. As we have mentioned in \S\ref{idea}, an important point in this step is a tangency observation. Now we give the rigorous statement.\medskip

Recall that $\widetilde{\CR}_{0}\subseteq \widetilde{\CR}$ is the union of all irreducible components of $\widetilde{\CR}$ which dominates an irreducible component of $\CR$.

\begin{proposition}\label{tangent}
For any closed point $x\in\widetilde{\CR}_{0}$, the image of the tangent map $({\rm d}(f\circ\pi))_{x}\colon T_{x}(\widetilde{\CX})\to T_{(f\circ\pi)(x)}(\CA\times_{\CC} T)$ at $x$ is contained in $T_{(f\circ\pi)(x)}(\CD_{\rm red})$. Here we naturally view $T_{(f\circ\pi)(x)}(\CD_{\rm red})$ as a subspace of $T_{(f\circ\pi)(x)}(\CA\times_{\CC} T)$.
\end{proposition}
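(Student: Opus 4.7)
The plan is to verify the tangency condition at the scheme-theoretic generic point of each irreducible component of $\widetilde{\CR}_{0}$, and then to propagate it to every closed point via a closedness argument. Writing $F = f \circ \pi \colon \widetilde{\CX} \to \CA \times_{\CC} T$, the statement at a closed point $x$ is equivalent to the ideal-theoretic inclusion $F^{-1}\CI_{\CD_{\rm red}} \cdot \CO_{\widetilde{\CX}, x} \subseteq \mathfrak{m}_{x}^{2}$. Setting $J \colonequals F^{-1}\CI_{\CD_{\rm red}} \cdot \CO_{\widetilde{\CX}}$ and considering the differential $d \colon J \to \Omega_{\widetilde{\CX}}$, the locus $V \subseteq F^{-1}(\CD_{\rm red})$ where $d(J)_{x} \subseteq \mathfrak{m}_{x} \Omega_{\widetilde{\CX}, x}$ is exactly where the coherent cokernel $\Omega_{\widetilde{\CX}} / d(J)$ attains its maximal fibre rank $\dim \widetilde{\CX}$, so by upper semi-continuity of fibre dimension it is Zariski closed. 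A closed set containing the scheme-theoretic generic point of an irreducible component contains the whole component, so it suffices to show that the generic point $\widetilde{\eta}$ of each component of $\widetilde{\CR}_{0}$ lies in $V$.

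Next I would trace $\widetilde{\eta}$ backwards through the chain $\widetilde{X} \xrightarrow{\pi_{\rm can}} X_{\rm can} \xrightarrow{\varpi} X \xrightarrow{f} A \times_{K} T_{K}$. By definition of $\widetilde{\CR}_{0}$, the image $\pi(\widetilde{\eta})$ is the generic point $\eta$ of some component $R^{(i)}$ of $R$. Since $X$ is normal and $\varpi$ is proper birational, the non-isomorphic locus of $\varpi$ has codimension $\geq 2$ in $X$, so $\varpi$ is an isomorphism near the codimension-one point $\eta$; by Lemma \ref{exactly} the unique preimage $\eta_{\rm can}$ is the generic point of a non-$\varpi$-exceptional component $R_{{\rm can}, i}$ of $R_{\rm can}$. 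Any component of $\widetilde{R} = \pi_{\rm can}^{*} R_{\rm can}$ which is $\pi_{\rm can}$-exceptional maps to a subset of codimension $\geq 2$ in $X$ and so cannot lie in $\widetilde{\CR}_{0}$; consequently $\widetilde{\eta}$ is necessarily the generic point of the strict transform of $R_{{\rm can}, i}$ under $\pi_{\rm can}$. For a strict transform of a prime divisor under a birational morphism the multiplicity in the total transform is one, so $\pi_{\rm can}$ induces an isomorphism of the local DVRs $\CO_{X_{\rm can}, \eta_{\rm can}} \to \CO_{\widetilde{X}, \widetilde{\eta}}$. Because $\widetilde{\eta}$ lies over the generic point of $B$, one has the equalities $\CO_{\widetilde{\CX}, \widetilde{\eta}} = \CO_{\widetilde{X}, \widetilde{\eta}}$ and $\CO_{\CA \times_{\CC} T, F(\widetilde{\eta})} = \CO_{A \times_{K} T_{K}, f(\eta)}$, so the verification reduces to a computation on the generic fibre.

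The final step is purely a DVR computation at $\widetilde{\eta}$. Both $\CO_{A \times_{K} T_{K}, f(\eta)}$ and $\CO_{X, \eta}$ are DVRs (the former since $A \times_{K} T_{K}$ is smooth and $f(\eta)$ has codimension one; the latter since $X$ is normal and $\eta$ has codimension one), and the induced DVR homomorphism has ramification index $e \geq 2$ because $\eta$ is a generic point of the ramification divisor of the finite separable morphism $f$ in characteristic zero. Moreover $\CI_{D_{\rm red}} \cdot \CO_{A \times_{K} T_{K}, f(\eta)} = \mathfrak{m}_{f(\eta)}$ since $D_{\rm red}$ is reduced with $f(\eta)$ as generic point of one of its components, and the same equality lifts to $\CI_{\CD_{\rm red}} \cdot \CO_{\CA \times_{\CC} T, F(\widetilde{\eta})} = \mathfrak{m}_{F(\widetilde{\eta})}$. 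Combining, together with the identifications of local rings above, yields $F^{-1}\CI_{\CD_{\rm red}} \cdot \CO_{\widetilde{\CX}, \widetilde{\eta}} = \mathfrak{m}_{\widetilde{\eta}}^{e} \subseteq \mathfrak{m}_{\widetilde{\eta}}^{2}$, placing $\widetilde{\eta}$ in $V$. The hard part of the plan is not this local calculation but the bookkeeping needed to match the components of $\widetilde{\CR}_{0}$ with non-$\varpi$-exceptional components of $R_{\rm can}$ via Lemma \ref{exactly} and to verify unramifiedness of the intervening maps at the relevant codimension-one points—issues that do not appear in the smooth heuristic of the introduction but become essential once one works through the canonical model.
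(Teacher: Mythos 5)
Your proof is correct, but it takes a genuinely different route from the paper's. The paper works at closed points of the open dense étale locus $U$ of $((f\circ\pi)|_{\widetilde{\CR}_0})_{\rm red}\to\CD_{\rm red}$: there the restricted tangent map is an isomorphism onto $T(\CD_{\rm red})$ of dimension $\geq d$, while the full tangent map ${\rm d}(f\circ\pi)_x$ is non-surjective (because $x$ lies in the ramification locus), so its image has dimension $\leq d$; forcing equality with $T(\CD_{\rm red})$. It then closes up by taking the Zariski closure of $p^{-1}(U)$ inside the pre-image of $T\CD_{\rm red}$ in the tangent bundle $T\widetilde{\CX}$. You instead reformulate the condition as the ideal containment $F^{-1}\CI_{\CD_{\rm red}}\cdot\CO_{\widetilde{\CX},x}\subseteq\mathfrak m_x^2$, close up via upper-semicontinuity of the fibre rank of the quotient of $\Omega_{\widetilde{\CX}}$ by (the $\CO$-submodule generated by) $d(J)$, and then verify the containment at the generic point $\widetilde\eta$ of each component of $\widetilde{\CR}_0$ via a DVR computation: tracing $\widetilde\eta$ back through $\pi_{\rm can}$ and $\varpi$ (both isomorphisms near the relevant codimension-one points, since $X_{\rm can}$ and $X$ are normal), Lemma \ref{exactly} identifies it with the generic point of a non-$\varpi$-exceptional component of $R_{\rm can}$, where the ramification index of $f$ is $e\geq2$ and $\CI_{\CD_{\rm red}}$ pulls back to $\mathfrak m_{\widetilde\eta}^e$. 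Your route is more commutative-algebraic and yields the sharper quantitative statement $\mathfrak m^e$ with $e\geq2$; the paper's is shorter and avoids the bookkeeping by working entirely on $\widetilde{\CX}$ rather than tracing through the chain $\widetilde X\to X_{\rm can}\to X$. Two small imprecisions in your write-up, both inessential: $d\colon J\to\Omega_{\widetilde{\CX}}$ is only a derivation, so one should pass to the coherent $\CO$-subsheaf generated by $d(J)$ before forming the cokernel; and the reason $\pi_{\rm can}$ induces an isomorphism of DVRs at $\widetilde\eta$ is not really "multiplicity one in the total transform" but the fact that a proper birational morphism onto a normal variety is an isomorphism over all codimension-one points (the multiplicity-one statement is a consequence of this, not a cause).
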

\begin{proof}
Denote $d+1$ to be the dimension of $\widetilde{\CX}$. Let $U\subseteq (\widetilde{\CR}_{0})_{\rm red}$ be the \'{e}tale locus of $((f\circ\pi)\big|_{\widetilde{\CR}_{0}})_{\rm red}\colon (\widetilde{\CR}_{0})_{\rm red}\to\CD_{\rm red} $. By our assumption, $U$ is Zariski dense in $\widetilde{\CR}_{0}$. When $x\in U$, we have an isomorphism ${\rm d}((f\circ\pi)\big|_{\widetilde{\CR}_{0}})_{\rm red}\colon  T_{x}((\widetilde{\CR}_{0})_{\rm red})\to T_{(f\circ\pi)(x)}(\CD_{\rm red})$. On the one hand, the image of ${\rm d}(f\circ\pi)\colon T_{x}(\widetilde{\CX})\to T_{(f\circ\pi)(x)}(\CA\times_{\CC} T)$ contains the image of ${\rm d}((f\circ\pi)\big|_{\widetilde{\CR}})_{\rm red}$, which has dimension at least $d$.   On the other hand, since $\CA\times_{\CC} T$ is smooth and ${\rm d}(f\circ\pi)$ at $x$ is not surjective, its image has dimension at most $\dim(T_{(f\circ\pi)(x)}(\CA\times_{\CC} T))-1=(d+1)-1=d$. Therefore its image is exactly $T_{(f\circ\pi)(x)}(\CD_{\rm red})$.

Now we consider the morphism on the tangent bundle ${\rm d}(f\circ\pi)\colon T\widetilde{\CX}\to T(\CA\times_{\CC}T)$. Since $\CD_{\rm red}$ is a closed subscheme of $\CA\times_{\CC}T$, $T\CD_{\rm red}$ is a closed subscheme of $T(\CA\times_{\CC}T)$. Thus the inverse image $Z=({\rm d}(f\circ\pi))^{-1}(T\CD_{\rm red})$ is a closed subset of $T\widetilde{\CX}$.  Let $p\colon T\widetilde{\CX}\to \widetilde{\CX}$ be the natural projection. The argument above shows that $Z$ contains
$p^{-1}(U)$. Hence $Z$ contains the Zariski closure of $p^{-1}(U)$ in $T\widetilde{\CX}$. As $\widetilde{\CX}$ is smooth, $p\colon T\widetilde{\CX}\to \widetilde{\CX}$ is a vector bundle over $\widetilde{\CX}$. This means that the Zariski closure of $p^{-1}(U)$ in $T\widetilde{\CX}$ is exactly $p^{-1}(\widetilde{\CR}_{0})$. In other words, for any point $x\in\widetilde{\CX}$, the image of the tangent map ${\rm d}(f\circ\pi)\colon T_{x}(\widetilde{\CX})\to T_{(f\circ\pi)(x)}(\CA\times_{\CC} T)$ is contained in $T_{(f\circ\pi)(x)}(\CD_{\rm red})$.
\end{proof}

\begin{corollary}\label{qie}
With the notations above. Denote $\tilde{x}\in \widetilde{X}(K)$ and $(f\circ\pi)(\tilde{x})=(s,t)\in A(K)\times T_{K}(K)$. View $\tilde{x}$ as a section $\tilde{x}\colon B\to\widetilde{\CX}$. If $\tilde{x}(b)\in\widetilde{\CR}_{0}$ for some $b\in B(\CC)$, then we have $(s,t)\colon B\to\CA\times_{\CC}T$ tangent to $\CD_{\rm red}$ at $(s(b),t(b))$. More precisely, the image of the tangent map ${\rm d}(s,t)\colon T_{b}B\to T_{(s(b),t(b))}(\CA\times_{\CC}T)$ at $b$ is contained in $T_{(s(b),t(b))}(\CD_{\rm red})$.
\end{corollary}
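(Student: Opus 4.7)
The plan is to derive this corollary as an immediate consequence of Proposition \ref{tangent} via the functoriality of the tangent map (the chain rule). The essential observation is that the section $(s,t)\colon B\to \CA\times_{\CC} T$ factors through $\widetilde{\CX}$: by construction of the integral models, the composition $f\circ\pi\colon \widetilde{\CX}\to \CA\times_{\CC} T$ was chosen to extend the morphism $f\circ\pi\colon \widetilde{X}\to A\times_{K} T_{K}$ on the generic fibre, and since $(s,t)=(f\circ\pi)(\tilde{x})$ on the generic fibre, the equality of sections $(s,t)=(f\circ\pi)\circ\tilde{x}\colon B\to \CA\times_{\CC} T$ follows by taking Zariski closure (both sides agree on the generic point of $B$, and both are sections of the separated morphism $\CA\times_{\CC} T\to B$).

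First I would invoke this factorization to write the tangent map at $b$ as a composition
\[
{\rm d}(s,t)_{b}\colon T_{b}B\ \xrightarrow{{\rm d}\tilde{x}_{b}}\ T_{\tilde{x}(b)}\widetilde{\CX}\ \xrightarrow{{\rm d}(f\circ\pi)_{\tilde{x}(b)}}\ T_{(s(b),t(b))}(\CA\times_{\CC} T).
\]
In particular, the image of ${\rm d}(s,t)_{b}$ is contained in the image of ${\rm d}(f\circ\pi)_{\tilde{x}(b)}$ regarded as a subspace of $T_{(s(b),t(b))}(\CA\times_{\CC} T)$.

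Next, by hypothesis the point $\tilde{x}(b)$ lies in $\widetilde{\CR}_{0}$, so Proposition \ref{tangent} applies at $x=\tilde{x}(b)$ and tells us that the image of ${\rm d}(f\circ\pi)_{\tilde{x}(b)}$ is contained in $T_{(s(b),t(b))}(\CD_{\rm red})$. Combining these two inclusions yields ${\rm Im}\bigl({\rm d}(s,t)_{b}\bigr)\subseteq T_{(s(b),t(b))}(\CD_{\rm red})$, which is exactly the claimed tangency statement.

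There is no real obstacle here; the corollary is essentially a formal consequence of Proposition \ref{tangent} together with the chain rule for tangent maps. The only minor point worth checking is the factorization $(s,t)=(f\circ\pi)\circ\tilde{x}$ at the level of $B$-sections, which is harmless given that $\CA\times_{\CC} T\to B$ is separated and both sides agree on the generic point by construction of $(s,t)$.
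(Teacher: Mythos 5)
Your proposal is correct and matches the paper's proof: both observe that $(s,t)$ factors as $(f\circ\pi)\circ\tilde{x}$, apply the chain rule to see that the image of ${\rm d}(s,t)_{b}$ lies in the image of ${\rm d}(f\circ\pi)_{\tilde{x}(b)}$, and then invoke Proposition \ref{tangent} at the point $\tilde{x}(b)\in\widetilde{\CR}_{0}$. Your remark justifying the factorization of sections via separatedness is a small extra careful step that the paper takes for granted, but the argument is the same.
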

\begin{proof}
Note that $(s,t)\colon B\to \CA\times_{\CC}T$ is exactly the composition $B\overset{\tilde{x}}\to \widetilde{\CX}\overset{\pi}\to \CX\overset{f}\to \CA\times_{\CC}T$, so the image in the Corollary is contained in the image of $T_{\tilde{x}(b)}(\widetilde{\CX})\to T_{(f\circ\pi)(\tilde{x}(b))}(\CA\times_{\CC}T)$, which by Proposition \ref{tangent}, is contained in $T_{(s(b),t(b))}(\CD_{\rm red})$.
\end{proof}
Since $\pi\colon \widetilde{X}\to X$ is birational, after taking a subsequence of $\{x_{n}\}_{n\geq1}$, we may assume that $x_{n}$ has a lift $\tilde{x}_{n}$ in $\widetilde{X}$ for all $n$. Hence Corollary \ref{qie} applies for all $\tilde{x}_{n}$.

We can translate Corollary \ref{qie} into an inequality in Nevanlinna theory. Recall that $\psi_{n},\varphi_{n}$ are respectively re-parametrizations of $\tilde{x}_{n},(s_{n},t_{n})$ defined in the last subsection. We have the following corollary.

\begin{corollary}\label{c5-}
There is a constant $c_{5}>0$ such that for all $n\geq1$, the inequality
\begin{align}\label{N1NN1-}
N(r,\psi_{n},\widetilde{\CR}_{0})\leq c_{5}(N(r,\varphi_{n},\CD_{\rm red})-N^{(1)}(r,\varphi_{n},\CD_{\rm red}))
\end{align}
holds for all $r\geq1$.
\end{corollary}

\begin{proof}
Choose a positive integer $c_{0}>0$ such that $\widetilde{\CR}_{0}\subseteq c(\widetilde{\CR}_{0})_{\rm red}$. Then we have
\begin{align}\label{C3-}
N(r,\psi_{n},\widetilde{\CR}_{0})\leq c_{0}N(r,\psi_{n},(\widetilde{\CR}_{0})_{\rm red})
\end{align}

Corollary \ref{qie} just means that
\begin{align}\label{N1-}
N^{(1)}(r,\psi_{n},(\widetilde{\CR}_{0})_{\rm red})\leq N(r,\varphi_{n},\CD_{\rm red})-N^{(1)}(r,\varphi_{n},\CD_{\rm red})
\end{align}
As $(\widetilde{\CR}_{0})_{\rm red}\subseteq (f\circ\pi)^{*}\CD_{\rm red}$, we have (by definition)
\begin{align}\label{NN1-}
N(r,\psi_{n},(\widetilde{\CR}_{0})_{\rm red})-N^{(1)}(r,\psi_{n},(\widetilde{\CR}_{0})_{\rm red})\leq N(r,\varphi_{n},\CD_{\rm red})-N^{(1)}(r,\varphi_{n},\CD_{\rm red})
\end{align}
In fact, if $\psi_{n}(z)\notin\widetilde{\CR}_{0}$, then
$${\rm ord}_{z}\psi_{n}^{*}(\widetilde{\CR}_{0})_{\rm red}=0\leq {\rm ord}_{z}\varphi_{n}^{*}(\CD_{\rm red})-\min\{1,{\rm ord}_{z}\varphi_{n}^{*}(\CD_{\rm red})\},
$$
If $\psi_{n}(z)\in\widetilde{\CR}_{0}$, then
$${\rm ord}_{z}\psi_{n}^{*}(\widetilde{\CR}_{0})_{\rm red}\leq {\rm ord}_{z}\varphi_{n}^{*}(\CD_{\rm red}) \quad {\rm and}\quad
\min\{1,{\rm ord}_{z}\psi_{n}^{*}(\widetilde{\CR}_{0})_{\rm red}\}=\min\{1,{\rm ord}_{z}\varphi_{n}^{*}(\CD_{\rm red})\}=1.
$$
So we always have
$$
{\rm ord}_{z}\psi_{n}^{*}(\widetilde{\CR}_{0})_{\rm red}-\min\{1,{\rm ord}_{z}\psi_{n}^{*}(\widetilde{\CR}_{0})_{\rm red}\}\leq {\rm ord}_{z}\varphi_{n}^{*}(\CD_{\rm red})-\min\{1,{\rm ord}_{z}\varphi_{n}^{*}(\CD_{\rm red})\}.
$$
Hence by the definition of the counting function, we get (\ref{NN1-}).

Combine (\ref{C3-}), (\ref{N1-}) and (\ref{NN1-}) we see that
\begin{align*}
N(r,\psi_{n},\widetilde{\CR}_{0})&\leq c_{0}N(r,\psi_{n},(\widetilde{\CR}_{0})_{\rm red})\notag\\
&=c_{0}\left(N(r,\psi_{n},(\widetilde{\CR}_{0})_{\rm red})-N^{(1)}(r,\psi_{n},(\widetilde{\CR}_{0})_{\rm red})+N^{(1)}(r,\psi_{n},(\widetilde{\CR}_{0})_{\rm red})\right) \notag \\
&\leq 2c_{0}(N(r,\varphi_{n},\CD_{\rm red})-N^{(1)}(r,\varphi_{n},\CD_{\rm red})).
\end{align*}
Now (\ref{N1NN1-}) is true by taking $c_{5}=2c_{0}>0$.
\end{proof}
\medskip

\noindent\textbf{Upper bound of $N(r,\psi_{n},\widetilde{\CR}_{1})$}\medskip

\noindent Recall that $\widetilde{\CR}=\widetilde{\CR}_{0}+\widetilde{\CR}_{1}$, and thus $N(r,\psi_{n},\widetilde{\CR})=N(r,\psi_{n},\widetilde{\CR}_{0})+N(r,\psi_{n},\widetilde{\CR}_{1})$. We have established the estimates (\ref{12}), (\ref{N1NN1-}) for $N(r,\psi_{n},\widetilde{\CR}), N(r,\psi_{n},\widetilde{\CR}_{0})$ respectively. Now we estimate the remaining term $N(r,\psi_{n},\widetilde{\CR}_{1})$, which turns out to be negligible, due to the fact that the image of $\widetilde{\CR}_{1}$ in $\CX$ has codimension greater than 1.

\begin{lemma}\label{NR1-}
For fixed $\varepsilon>0$, we can enlarge the set $\Sigma_{\varepsilon}$ above such that for all $r\in\RR_{>0}\backslash\Sigma_{\varepsilon}$, the inequality
\begin{align}\label{R1-}
N(r,\psi_{n},\widetilde{\CR}_{1})\leq\varepsilon T(r,\varphi_{\infty},(\CD_{b,t_{\infty}})_{\rm red})
\end{align}
holds for $n$ large enough $($depending on $\varepsilon$ and $r)$.
\end{lemma}

\begin{proof}
Since $\CZ_{1}=(f\circ\pi)(\widetilde{\CR}_{1})$, there is a positive integer $C_{1}$ such that $\widetilde{\CR}_{1}\subseteq C_{1}(f\circ\pi)^{*}\CZ_{1}$. Then we have
\begin{align}\label{C1-}
N(r,\psi_{n},\widetilde{\CR}_{1})\leq C_{1}N(r,\varphi_{n},\CZ_{1}).
\end{align}

Recall from ``\textbf{Choice of all data}'' that we have a commutative diagram
\begin{displaymath}
\xymatrix{
\CZ'_{1}\ar@{^{(}->}[r]\ar[rd]_{\rm flat} & \CA\times_{\CC}T'\ar[r]^{{\rm id}_{\CA}\times g}\ar[d] & \CA\times_{\CC}T\ar[d] & \CZ_{1}\ar@{_{(}->}[l]\\
\empty & T'\ar[r]^{g} & T
}
\end{displaymath}
where $g\colon T'\to T$ is proper birational and $\CZ'_{1}$ is the strict transform of $\CZ_{1}$ under $g$. Thus $\CZ'_{1}$ is the pullback of $\CZ_{1}$ in $\CA\times_{\CC}T'$ minus some $T'$-vertical component $\CV_{T'}$. As $g$ is birational, for all but finitely many $n$, we can lift $\varphi_{n}\colon \BD_{r_{n}}\to \CA\times\{t_{n}\}$ uniquely to $\varphi'_{n}\colon \BD_{r_{n}}\to\CA\times\{t'_{n}\}$. Moreover, $t_{n}\notin {\rm pr}_{T'}(\CV_{T'})$ for all but finitely many $n$. In this case $N(r,\varphi'_{n},\CV_{T'})\equiv0$. Thus for all but finitely many $n$ and all $r>0$,
\begin{align}\label{=-}
N(r,\varphi_{n},\CZ_{1})=N(r,\varphi'_{n},\CZ'_{1})
\end{align}
Since $\{\varphi_{n}\}_{n\geq1}$ has a limit $\varphi_{\infty}$, $\{\varphi'_{n}\}_{n\geq1}$ also has a limit $\varphi'_{\infty}$, which is a lift of $\varphi_{\infty}$ in $\CA\times_{\CC}T'$. Recall that $Z_{1}$ is the image of $(\CZ'_{1})_{t'_{\infty}}$ in $\CA\times\{t_{\infty}\}$. By property \ref{x4} of the point $x$, we see that $\varphi_{\infty}(\CC)\nsubseteq Z_{1}$, and thus $\varphi'_{\infty}(\CC)\nsubseteq\CZ'_{1}$. By Lemma \ref{limits}, for any $r\in\RR_{>0}\backslash\Sigma_{\varepsilon}$,
\begin{align}\label{leq-}
N(r,\varphi'_{n},\CZ'_{1})\leq 2N(r,\varphi'_{\infty},\CZ'_{1})
\end{align}
holds for $n$ large enough.

Take a positive integer $C_{2}$ such that $(\CZ'_{1})_{t'_{\infty}}\subseteq C_{2}({\rm id}_{\CA}\times g)^{*}Z_{1}$. Then
\begin{align}\label{C2-}
N(r,\varphi'_{\infty},\CZ'_{1})=N(r,\varphi'_{\infty},(\CZ'_{1})_{t'_{\infty}})\leq C_{2}N(r,\varphi_{\infty},Z_{1})
\end{align}

By our assumption, the Zariski closure of $\varphi_{\infty}(\CC)$ in $\CA\times_{\CC}T$ is $(x+\CG_{b}\times\{t_{\infty}\})$, whose intersection with $(Z_{1})_{b}$ has codimension greater than 1 in $(x+\CG_{b}\times\{t_{\infty}\})$ (see property \ref{x4} of $x$). By Lemma \ref{Nevanlinna}(2), we can enlarge $\Sigma_{\varepsilon}$ such that for all $r\in\RR_{>0}\backslash\Sigma_{\varepsilon}$,
\begin{align}\label{R1zuihou-}
N(r,\varphi_{\infty},Z_{1})=N(r,\varphi_{\infty},(Z_{1})_{b})\leq \frac{\varepsilon}{2C_{1}C_{2}} T(r,\varphi_{\infty},(\CD_{b,t_{\infty}})_{\rm red})
\end{align}
Combine (\ref{C1-}), (\ref{=-}), (\ref{leq-}), (\ref{C2-}), (\ref{R1zuihou-}), we see that for all $r\in\RR_{>0}\backslash\Sigma_{\varepsilon}$, the inequality (\ref{R1-}) holds for $n$ large enough.
\end{proof}

\begin{proof}[Proof of Lemma $\ref{xiajie}$]
Recall that we have defined constants $c_{1},c_{2},c_{3},c_{4},c_{5}>0$ which are independent of $\varepsilon$. Now take $c=\frac{c_{1}}{8c_{5}}$ and choose $0<\varepsilon<\min\{\frac{1}{2}, \frac{c_{1}}{2c_{3}c_{4}}, \frac{c_{1}}{8(c_{3}c_{4}+1)}\}$. Take $r>1$ outside the set $\Sigma_{\varepsilon}$ such that $T(r,\varphi_{\infty},(\CD_{b,t_{\infty}})_{\rm red})>0$. Then we deduce from (\ref{12}), (\ref{N1NN1-}), (\ref{R1-}) that
\begin{align*}
N(r,\varphi_{n},\CD_{\rm red})-N^{(1)}(r,\varphi_{n},\CD_{\rm red})&\geq \frac{1}{c_{5}}\left(\frac{c_{1}}{4}-\varepsilon(c_{3}c_{4}+1)\right)T(r,\varphi_{\infty},(\CD_{b,t_{\infty}})_{\rm red})\\
&\geq c\cdot T(r,\varphi_{\infty},(\CD_{b,t_{\infty}})_{\rm red}).
\end{align*}
for $n$ large enough (depending on $\varepsilon$ and $r$). This completes the proof.
\end{proof}

\section{Proof of the main theorem}\label{3}
In this section we prove the main theorem \ref{main}. We first use Proposition \ref{generic1} to prove the following proposition.

\begin{proposition}\label{ht}
Let $K$ be a function field of transcendence degree \emph{1} over a field $k$ of characteristic \emph{0} such that $k$ is algebraically closed in $K$. Let $X$ be projective variety over $K$ with a finite morphism $f\colon X\to A$ to an abelian variety $A$ over $K$. Then the heights of the points in $(X\backslash{\rm Sp}(X))(K)$ are bounded.
\end{proposition}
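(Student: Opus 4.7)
The plan is to argue by contradiction, combining a standard base change reduction together with Proposition \ref{generic1}. Suppose some sequence $\{x_n\}_{n\geq 1} \subseteq (X \setminus {\rm Sp}(X))(K)$ has unbounded height.

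First I would reduce to the case $k = \CC$. The data $(X, A, f)$ and (countably many of) the $x_n$ descend to a finitely generated subfield $k_0 \subseteq k$; equivalently they are defined over $K_0 = k_0(B_0)$ for some smooth projective curve $B_0$ over $k_0$ with $k_0$ algebraically closed in $K_0$. Choose any embedding $k_0 \hookrightarrow \CC$ and base change to $K' = \CC(B_0 \times_{k_0} \CC)$. Weil heights are compatible with this base change, so the unboundedness persists. Moreover, ${\rm Sp}(-)$ is defined via nonconstant rational maps from abelian varieties over the algebraic closure, so it commutes with algebraically closed base extension and the condition $x_n \notin {\rm Sp}(X)$ is preserved. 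After relabelling we may assume $k = \CC$ and $K = \CC(B)$.

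Next I would extract a Zariski-generic subsequence. The set of irreducible components of the Zariski closure in $X$ of the set $\{x_n\}_{n\geq 1}$ is finite, so after passing to a subsequence there is a single irreducible $Y \subseteq X$ in which the subsequence is Zariski dense, hence generic. Since every remaining $x_n$ lies in $Y \setminus {\rm Sp}(X)$, we have $Y \not\subseteq {\rm Sp}(X)$; and since any nonconstant map from an abelian variety into $Y$ is also such a map into $X$, we get ${\rm Sp}(Y) \subseteq Y \cap {\rm Sp}(X) \subsetneq Y$.

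The key observation is that $Y$ must be of general type. The composition $g \colonequals f|_Y \colon Y \to A$ is finite, and the classical Kawamata--Ueno structure theorem (of the sort invoked as \cite[Corollary 4.3]{XY1}) gives that any projective variety admitting a finite morphism to an abelian variety and whose special set is strictly smaller than itself must be of general type. Indeed, were $Y$ not of general type, then $g(Y) \subseteq A$ would not be of general type either, hence by Ueno's theorem would be stabilized by a positive-dimensional abelian subvariety $B \subseteq A$; pulling back the $B$-translates by the finite map $g$ gives nonconstant images of abelian varieties covering $Y$, contradicting ${\rm Sp}(Y) \subsetneq Y$. Now $(Y, g, \{x_n\})$ verifies all hypotheses of Proposition \ref{generic1}, and that proposition forces the height of $\{x_n\}$ to be bounded, the desired contradiction. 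The main obstacle has already been overcome: it sits inside Proposition \ref{generic1}; the two delicate checks here are the preservation of ${\rm Sp}$ under base change and the Kawamata--Ueno step, both of which are standard.
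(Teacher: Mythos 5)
Your overall strategy is the same as the paper's (reduce to $k=\CC$ by the Lefschetz principle, produce a generic sequence, invoke the Ueno--Kawamata result to conclude general type, then apply Proposition~\ref{generic1}), but there is a genuine gap in the step where you produce a generic sequence. You pass to a subsequence contained and Zariski dense in a single irreducible component $Y$ of $\ol{\{x_n\}}^{\rm zar}$ and then assert it is ``Zariski dense, hence generic.'' That inference is false: genericness requires \emph{every} infinite subsequence to be Zariski dense, and a Zariski-dense sequence can easily contain an infinite subsequence that is not (e.g.\ half its terms can lie on a fixed proper subvariety). Without genericness you cannot apply Proposition~\ref{generic1}. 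The paper avoids this by choosing $X$ to be a counterexample of minimal dimension and showing that any infinite subsequence with smaller Zariski closure would yield a lower-dimensional counterexample; equivalently, one can argue by noetherian induction, passing to an infinite subsequence whose Zariski closure is minimal among Zariski closures of infinite subsequences, which is then automatically generic. A closely related omission: you start from a sequence of merely unbounded height, but after passing to a subsequence, unboundedness is not automatically preserved; one should first extract a subsequence whose heights tend to infinity (as the paper does), since that property is stable under taking further subsequences and is what the contradiction with Proposition~\ref{generic1} requires. Once these two points are repaired, the rest of your argument --- the reduction to $\CC$, restricting $f$ to $Y$, and the Ueno--Kawamata step deducing that $Y$ is of general type from ${\rm Sp}(Y)\subsetneq Y$ --- matches the paper.
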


\begin{proof}
First consider the case when $k=\CC$. We prove by contradiction. Assume to the contrary that the Proposition does not hold (for $k=\CC$), and $X$ is a counterexample which has the smallest dimension among all counterexamples. Let $\{x_{n}\}_{n\geq1}$ be a sequence of points of $(X\backslash{\rm Sp}(X))(K)$ with height tending to infinity as $n\to\infty$. Then we claim that $\{x_{n}\}_{n\geq1}$ is generic. In fact, for any infinite subsequence $\{y_{n}\}_{n\geq1}$ of $\{x_{n}\}_{n\geq1}$, let $Y$ be an irreducible component of the Zariski closure of $\cup_{n\geq1}\{y_{n}\}$ in $X$. Then $\{y_{n}\}_{n\geq1}$ has a subsequence $\{z_{n}\}_{n\geq1}$ contained in $Y$ which is Zariski dense in $Y$. By our assumption, $\{z_{n}\}_{n\geq1}$ has height tending to infinity. Moreover, since $z_{n}\notin {\rm Sp}(X)$ and ${\rm Sp}(Y)\subseteq {\rm Sp}(X)$, we see that $z_{n}\in (Y\setminus{\rm Sp}(Y))(K)$ for all $n\geq1$. Since $Y$ also has a finite morphism to $A$ via the composition $Y\hookrightarrow X\to A$, we see that $Y$ is also a counterexample of the Proposition. By the minimality of the dimension of $X$, we see that $\dim Y=\dim X$, which implies that $Y=X$. This proves our claim.

Now notice that ${\rm Sp}(X)\subsetneqq X$ by our assumption. Thus by \cite[Corollary 4.3]{arXiv:2305.14789}, which is essentially due to Ueno and Kawamata, $X$ is of general type. This means that $X$ and $\{x_{n}\}_{n\geq1}$ satisfy the conditions in Proposition \ref{generic1} (with $T$ taken to be trivial). Hence by our assumption the height of $\{x_{n}\}_{n\geq1}$ should be bounded, a contradiction!

For general $k$, we use the Lefschetz principle. In fact, it suffices to prove that for any infinite sequence $\{x_{n}\}_{n\geq1}$ of $(X\backslash{\rm Sp}(X))(K)$, the heights of $\{x_{n}\}_{n\geq1}$ is bounded. The datum $(K, X, A, f\colon X\to A)$ is defined over a finitely generated subfield of $k$ over $\QQ$, and so is every $x_{n}\in X(K)$. Then all these data with all $n\geq1$ are defined over a countable generated subfield $k_{0}$ of $k$ over $\QQ$. Fix an inclusion $k_{0}\hookrightarrow\CC$. By descent from $k$ to $k_{0}$ and base change from $k_{0}$ to $\CC$, we achieve the case that the base field is $\CC$. Note that the heights of the points $\{x_{n}\}_{n\geq1}$ do not change in this process, and taking special set of projective varieties is stable under base change by \cite[Lemma 4.2]{arXiv:2305.14789}. Thus the Proposition follows from the case when $k=\CC$.
\end{proof}

Now we are ready to prove Theorem \ref{albanese}. We first prove the following lemma.

\begin{lemma}\label{const}
Let $K$ be a function field of transcendence degree \emph{1} over a field $k$ of characteristic \emph{0} such that $k$ is algebraically closed in $K$. Let $X$ be a projective variety $X$ over $K$ which does not contain rational curves. Assume that $X(K)$ is Zariski dense in $X$. And there is a Zariski dense subset $U$ of $X(K)$ such that the height of the points in $U$ is bounded.

Then the normalization of $X$ is isomorphic to the base change $T_{K}=T\times_{k}K$ for a projective variety $T$ over $k$, and the complement of the image of the composition $T(k)\to (T_{K})(K)\to X(K)$ in $X(K)$ is not Zariski dense in $X$.
\end{lemma}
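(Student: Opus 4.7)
The plan is to first constrain the image of $X$ in $A$ to a constant abelian subvariety using the bounded height hypothesis via Lang--N\'eron, then to descend $X$ itself to $k$ by spreading out to integral models and exploiting a dense family of ``fibre-constant'' sections, and finally to control the non-constant $K$-points to get the complement statement.

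After replacing $X$ by its normalization, I would apply the Lang--N\'eron theorem: $A(K)/\tau A^{(K/k)}(k)$ is finitely generated and the N\'eron--Tate height (modulo torsion) is positive-definite on it, so any bounded-height subset of $A(K)$ is contained in a finite union of cosets $\bigcup_{i=1}^{N}(a_i+\tau A^{(K/k)}(k))$. Since $U$ is open dense in $X$ and $X(K)$ is Zariski dense, $U(K)$ is Zariski dense in $X$, hence $f(U(K))$ is Zariski dense in $f(X)$; by irreducibility, $f(X)\subseteq a+B_K$ for some $a\in A(K)$ and constant abelian subvariety $B_K=\tau((A^{(K/k)})_K)\subseteq A$. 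Translating and replacing $B$ by the image, we reduce to the setting of a finite surjective $K$-morphism $f\colon X\to B_K$ with $B$ an abelian variety over $k$.

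Next, I would spread out. Let $B_0$ be the smooth projective curve over $k$ with $k(B_0)=K$, take an integral model $\pi\colon\mathcal X\to B_0$ of $X$, and extend $f$ to a finite morphism $\mathcal X\to\mathcal B:=B\times_k B_0$. Each $x\in U(K)$ extends (by properness) to a section $s_x\colon B_0\to\mathcal X$, and the composition $B_0\to\mathcal X\to\mathcal B\to B$ is the element of $B(K)=\mathrm{Mor}_k(B_0,B)$ corresponding to $f(x)$; by the Lang--N\'eron analysis, after absorbing finitely many representatives by further translation we may assume each such composition is a constant map to some $b_x\in B(k)$, i.e.\ $s_x(B_0)\subseteq\pi_B^{-1}(b_x)$ where $\pi_B\colon\mathcal X\to B$. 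Since $\dim\mathcal X=\dim B+1$, the fibres of $\pi_B$ are $1$-dimensional; taking the Stein factorization $\mathcal X\to\mathcal T\to B$ (with $\mathcal T\to B$ finite), each $s_x$ is contracted by $\mathcal X\to\mathcal T$ to a point $t_x\in\mathcal T(k)$, and for Zariski-densely many $t\in\mathcal T(k)$ the fibre $\mathcal X_t$ has a component isomorphic to $B_0$ via such a section. A rigidity/isotriviality argument for the family of proper $1$-dimensional connected fibres of $\mathcal X\to\mathcal T$, endowed with a Zariski-dense family of $B_0$-sections, yields $\mathcal X\cong T\times_k B_0$ (after normalization) with $T=\mathcal T$; taking generic fibres gives $X\cong T_K$.

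For the complement, a $K$-point of $X\cong T_K$ corresponds (by properness of $T/k$) to a $k$-morphism $\phi\colon B_0\to T$, and lies in $\mathrm{Im}(T(k)\to X(K))$ iff $\phi$ is constant. I would split $X(K)\setminus\mathrm{Im}(T(k))$ according to whether the $K$-point lies in $U(K)$: the part outside $U(K)$ has scheme-theoretic image in the proper closed set $X\setminus U$. For a non-constant $\phi$ lying in $U(K)$, the composition $B_0\xrightarrow{\phi}T\to B$ is non-constant (since $T\to B$ is finite and $B_0$ is connected), and by bounded height lies in finitely many cosets $c_i+B(k)\subseteq B(K)$ with each $c_i$ non-trivial; for each fixed $c_i$, the scheme of lifts $B_0\to T$ of $c_i+b$ (as $b$ ranges in $B(k)$) is constrained so that the images $C_\phi\subseteq T$ lie in finitely many $T$-translates of fixed curves, hence in a proper closed subset of $T$. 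Therefore the Zariski closure in $X=T_K$ of $X(K)\setminus\mathrm{Im}(T(k))$ is a proper closed subset.

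The main obstacle is the descent step in the second paragraph: producing the global product structure $\mathcal X\cong T\times_k B_0$ from a Zariski-dense family of fibre-contracted sections. The rigidity argument needs to combine the finiteness of $\mathcal X\to\mathcal B$, the properness of $\mathcal X\to B_0$, and the density of sections to rule out nontrivial monodromy/twist in the family $\mathcal X\to\mathcal T$.
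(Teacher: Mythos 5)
Your route differs substantially from the paper's, and it has a genuine gap that you yourself flag. The paper's proof does not begin with a Lang--N\'eron reduction of the image in $A$; instead it goes directly to a Hilbert/Hom-scheme argument: the sections $\tilde x\subseteq\CX$ of an integral model $\CX\to B_0$ with $\deg_{\CL}(\tilde x)<c$ form a quasi-projective $k$-scheme $S$, the tautological $k$-morphism $S\times_k B_0\to\CX$ is dominant because its image contains the dense set $U(K)$, and after compactifying and resolving $S$ one gets a surjective $K$-morphism $V_K\to X$ from a smooth projective $k$-variety $V$ (the extension of the rational map to a morphism uses exactly that $X$ contains no rational curves, being finite over an abelian variety); the constancy of the normalization of $X$ is then deduced from this dominant constant family as in \cite[\S 4.2, Constancy]{XY1}. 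This approach never needs $f$ to be surjective and never isolates a one-dimensional fibre direction.

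By contrast, your approach has two concrete problems. First, the reduction to ``finite surjective $f\colon X\to B_K$'' does not follow: Lang--N\'eron only gives $f(X)\subseteq a+B_K$ inside the Zariski closure of a coset of the trace, and $f(X)$ need not be all of $a+B_K$ (nor any sub-coset), so when $\dim X<\dim B_K$ the subsequent dimension count ``$\dim\CX=\dim B+1$, fibres of $\pi_B$ are $1$-dimensional'' breaks down and the Stein factorization $\CX\to\CT\to B$ is no longer a family of curves. Second and more seriously, the ``rigidity/isotriviality'' step asserting $\CX\cong T\times_k B_0$ from a Zariski-dense set of fibre-contracted sections is precisely the hard content of the lemma and is not supplied. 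Having $\CX_t\cong B_0$ for a dense set of $t$ at best makes the family isotrivial; promoting isotriviality to a global product requires killing monodromy and a possible $\mathrm{Aut}(B_0)$-twist, and your sketch does not explain how the density of sections (as opposed to a single section of $\CX\to\CT$, which you do not have) achieves this. The complement argument in your last paragraph likewise rests on an unjustified assertion that the images $C_\phi\subseteq T$ of non-constant $U(K)$-points land in a finite union of translates of fixed curves. The paper sidesteps all of this: because the Hilbert scheme already parametrizes \emph{all} bounded-degree sections, every point of $U(K)$ lies in the image of $S(k)\to X(K)$ by construction, which is what drives both the constancy of $X$ and the control on the non-constant points.
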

\begin{proof}
The proof is essential the same as in \cite[\S 2.2, \textbf{Constancy 1} and \S 4.2, \textbf{Constancy}]{arXiv:2305.14789}, we summarize it here. We first prove that $X/K$ is dominated by a constant family, and all the rational points come from this process. Namely, there is a smooth projective scheme $V$ over $k$, together with a dominant $K$-morphism
$$
\varphi\colon V_{K}\ \lra\ X
$$
such that the composition
$$
V(k)\ \lra\ V_{K}(K)\ \lra\ X(K)
$$
is surjective. Here we denote $V_{K}=V\times_{k}K$.
Let $K$ be the function field of a smooth projective curve $B$ over $k$, and let $\CX$ be an integral model of $\CX$ over $B$. Let $\CL$ be an ample line bundle on $\CX$. Every point $x\in X(K)$ extends to a section $\tilde{x}\subseteq\CX$ over $B$. By our assumption, there is a constant $c>0$ such that $\deg_{\CL}(\tilde{x})<c$ for every $x\in U$.

The key is that there is a quasi-projective scheme $S$ over $k$ parametrizing the set of sections $Y$ of $\CX\to B$ with $\deg_{\CL}(Y)<c$. By the moduli, we have a $k$-morphism $S\times_{k}B\to\CX$, which sends the fibre $s\times_{k}B$ for $s\in S(k)$ to the section of $\CX\to B$ corresponding to $s$. It follows that the image of the composition
$$
S(k)\ \lra\ (S\times_{k}B)(B)\ \lra\ \CX(B)\to X(K)
$$
contains $U$. Since $U$ is Zariski dense in $X$, the morphism $S\times_{k}B\to \CX$ is dominant. Taking a base change by ${\rm Spec}\, K\to B$, we obtain a dominant $K$-morphism $S\times_{k}K\to X$.

We can adjust the quasi-projective scheme $S$ to a smooth projective scheme over $k$. In fact, by taking a disjoint union of irreducible components, we can assume that $S$ is a disjoint union of quasi-projective varieties. Let $V'$ be a disjoint union of projective varieties containing $S$ as a dense and open subscheme. Let $V$ be a resolution of singularities of $V'$ by Hironaka's theorem. The morphism $S_{K}\to X$ gives a dominant rational map $V_{K}\dashrightarrow X$. It can be extended to a morphism $V_{K}\to X$ since $X$ does not contain any rational curve.

By construction, the composition
$$
V(k)\ \lra\ (V\times_{k}B)(B)\ \lra\ \CX(B)\ \lra\ X(K)
$$
has dense image. But $V$ is projective, so it is surjective and the morphism $V_{K}\to X$ is also surjective.

Note that the normalization of $X$ also satisfies the conditions in the lemma since the normalization map is finite. Therefore to prove the constancy of the normalization of $X$, we may assume that $X$ is itself normal. In this case, the proof is the same as \cite[\S 4.2, \textbf{Constancy}]{arXiv:2305.14789}, we omit it here. This finishes the proof of the first part of the lemma.
\end{proof}

\begin{remark}
A natural question is that if the condition ``$X$ does not contain rational curves'' can be removed in Lemma \ref{const} (with the conclusion replaced by ``$X$ is birational to a constant variety $T_{K}$''). We will use the lemma only for those $X$ that admit a finite map to some abelian variety, in this case $X$ does not contain rational curves.
\end{remark}

\noindent\textbf{Proof of Theorem \ref{albanese}}\medskip

\noindent After taking a normalization we may assume that $X$ is normal. First we reduce the transcendence degree of $K/k$ to 1 in Theorem \ref{albanese}. Assume that ${\rm trdeg}(K/k)>1$, we need to prove that the conclusion holds for the datum $(K/k,X)$.

Let $k_{1}$ be an intermediate field of $K/k$ such that $K/k_{1}$ has transcendence degree 1 and that $k_{1}$ is algebraically closed in $K$. As $X(K)$ is Zariski dense in $X$, it is not empty. In particular, we have an Albanese morphism $f\colon X\to{\rm Alb}(X)$ defined over $K$. Assume that Theorem \ref{albanese} holds for the datum $(K/k_{1},X)$. Then the Albanese variety of $X$ is defined over $k_{1}$, and $X$ is birational to $(X_{1})_{K}$ for some projective variety $X_{1}$ over $k_{1}$. Moreover, the complement of the
image of ${\rm Im}(X_{1}(k_{1})\to(X_{1})_{K}(K))$ in $X(K)$ (via a birational map from $(X_{1})_K$ to $X$) is not Zariski dense in $X$. Thus $X_{1}$ is of general type with $X(k_{1})$ Zariski dense in $X_{1}$. After taking a normalization of $X_{1}$, we can assume that $X_{1}$ is normal. Then $X_{1}$ is also of maximal Albanese dimension. In fact, after composing a birational map, the base change of the Albanese map of $X_{1}$ (over $k_{1}$) to $K$ is just the Albanese map of $X$. If in addition $f$ is finite, then there is a $K$-isomorphism $X_{1}\times_{k_{1}}K\to X$ for some projective variety $X_{1}$ over $k_{1}$. By \cite[Lemma 4.8]{arXiv:2305.14789}, the finite morphism $X_{1}\times_{k_{1}}K\to X\to {\rm Alb}(X)$ induces a finite $k_{1}$-morphism $X_{1}\to {\rm Alb}(X)^{(K/k_{1})}$. Therefore we can reduce Theorem \ref{albanese} to the datum $(k_{1}/k, X_{1})$. Repeat the process on $k_{1}/k$, we eventually get the case ${\rm trdeg}(K/k)=1$.

Now assume ${\rm trdeg}(K/k)=1$. Recall that we have an Albanese morphism $f\colon X\to{\rm Alb}(X)$ defined over $K$. By Stein factorization, $f$ can be decomposed as $X\to X_{0}\to{\rm Alb}(X)$ where $X\to X_{0}$ has connected fibres and $X_{0}\to{\rm Alb}(X)$ is finite. As $f\colon X\to{\rm Alb}(X)$ is generically finite onto its image, $X\to X_{0}$ is birational. In particular, $X_{0}$ is of general type and $X_{0}(K)$ is Zariski dense in $X_{0}$. Replacing $X$ by $X_{0}$, we may assume that $f$ is finite.

By Proposition \ref{ht}, the heights of the points in $(X\backslash{\rm Sp}(X))(K)$ are bounded. Since $X$ is of general type, by \cite[Corollary 4.3]{arXiv:2305.14789}, ${\rm Sp}(X)$ is a proper Zariski closed subset of $X$. Since $X(K)$ is Zariski dense in $X$, Theorem \ref{albanese} follows from Lemma \ref{const}. \qed \medskip\medskip

Finally, we can prove the main Theorem \ref{main}.\medskip

\noindent\textbf{Proof of Theorem \ref{main}}\medskip

\noindent Let $(X, Z)$ be as in Theorem \ref{main}. Let $Z_{1},\dots, Z_{r_{0}}$ be the irreducible components of $Z$. By assumption, $Z$ is the Zariski closure of $(X\backslash{\rm Sp}(X))(K)$ in $X$, so $Z_{i}(K)$ is Zariski dense in $Z_{i}$ for $i=1,\dots, r_{0}$. Moreover, each $Z_{i}$ is not contained in ${\rm Sp}(X)$ since otherwise $Z$ is contained in the union $\cup_{1\leq j\leq r_{0}, j\neq i}Z_{j}$ which is impossible. Therefore ${\rm Sp}(Z_{i})\subseteq {\rm Sp}(X)\cap Z_{i}$ is a proper closed subset of $Z_{i}$. By \cite[Corollary 4.3]{arXiv:2305.14789} which is essentially due to Ueno and Kawamata, $Z_{i}$ is of general type. Apply Theorem \ref{albanese} to $Z_{i}$ for $i=1,\dots,r_{0}$. As in part (1) of Theorem \ref{main}, we obtain $(T_{i},\rho_{i})$ for $i=1,\dots, r_{0}$. Moreover, the complement of ${\rm Im}(T_{i}(k)\to Z_{i}(K))$ in $Z_{i}(K)$ is not Zariski dense in $Z_{i}$.

If $\Sigma\colonequals ({\rm Sp}(X)\cap Z)(K)\cup(\cup_{1\leq i\leq r_{0}}{\rm Im}(T_{i}(k)\to Z(K)))$ not equals to $Z(K)$, denote $Z^{(1)}$ the Zariski closure of $Z(K)\setminus\Sigma$ in $X$. Note that every irreducible component of $Z^{(1)}$ is properly contained in an irreducible component of $Z$ and is not contained in ${\rm Sp}(X)$. Thus we can apply Theorem \ref{albanese} to the irreducible components $Z_{r_{0}+1}, Z_{r_{0}+2},\dots, Z_{r_{1}}$ of $Z^{(1)}$.  Repeat this process. Eventually, we have closed subvarieties $Z_{r_{0}+1}, Z_{r_{0}+2},\dots, Z_{r}$ of $Z$, with a $K$-isomorphism $\rho_{i}\colon T_{i}\times_{k}K\to Z'_{i}$ for a projective variety $T_{i}$ over $k$ for $i=r_{0}+1,\dots,r$, where $Z'_{i}$ is the normalization of $Z_{i}$, such that
$$
X(K)={\rm Sp}(X)(K)\cup\left(\bigcup\limits_{1\leq i\leq r}{\rm Im}(T_{i}(k)\to Z(K))\right).
$$
This finishes the proof of the main Theorem \ref{main}. \qed

\bibliographystyle{alpha}
\bibliography{refs}

\

\noindent \small{ Peking University, Beijing 100871, China}

\noindent \small{\it Email: gaoguoquan@pku.edu.cn}
\end{document}